\theoremstyle{plain}
\newtheorem{thm}{Theorem}
\newtheorem{lem}[thm]{Lemma}
\newtheorem{cor}[thm]{Corollary}
\newtheorem{prop}[thm]{Proposition}
\newtheorem{remark}[thm]{Remark}
\newtheorem{defn}[thm]{Definition}
\newtheorem{defn-prop}[thm]{Proposition-Definition}
\newtheorem{ex}[thm]{Example}
\newtheorem{assum}[thm]{Assumption}
\definecolor{lightgrey}{rgb}{0.8, 0.84, 0.8}
\def\la{\langle}
\def\ra{\rangle}
\def\bal{\begin{aligned}}
\def\eal{\end{aligned}}
\newcommand{\eq}[2]{\begin{equation}\label{#1}#2 \end{equation}}
\newcommand{\ml}[2]{\begin{multline}\label{#1}#2 \end{multline}}
\newcommand{\surj}{\twoheadrightarrow}
\newcommand{\inj}{\hookrightarrow}
\newcommand{\rank}{{\rm rank}}
\newcommand{\sD}{{\mathcal D}}
\newcommand{\sE}{{\mathcal E}}
\newcommand{\sG}{{\mathcal G}}
\newcommand{\sH}{{\mathcal H}}
\newcommand{\sL}{{\mathcal L}}
\newcommand{\sM}{{\mathcal M}}
\newcommand{\sO}{{\mathcal O}}
\newcommand{\sR}{{\mathcal R}}
\newcommand{\sW}{{\mathcal W}}
\newcommand{\sX}{{\mathcal X}}
\newcommand{\C}{{\mathbb C}}
\newcommand{\E}{{\mathbb E}}
\newcommand{\F}{{\mathbb F}}
\newcommand{\G}{{\mathbb G}}
\renewcommand{\H}{{\mathbb H}}
\renewcommand{\P}{{\mathbb P}}
\newcommand{\Q}{{\mathbb Q}}
\newcommand{\R}{{\mathbb R}}
\newcommand{\Z}{{\mathbb Z}}
\newcommand{\ve}{{\varepsilon}}
\title{Gamma functions, monodromy and Frobenius constants}
\author{Spencer Bloch, Masha Vlasenko}
\address{5765 S. Blackstone Ave., Chicago, IL 60637, USA}
\email{spencer\_bloch@yahoo.com}
\address{Institute of Mathematics of the Polish Academy of Sciences, Sniadeckich 8, 00-656 Warsaw, Poland}
\email{masha.vlasenko@gmail.com}
\thanks{
Work of the second author was supported by the National Science Centre of Poland (NCN), grant UMO-2016/21/B/ST1/03084.}
\begin{document}
\maketitle

\section*{Introduction}\label{intro}
In an important paper \cite{GZ}, Golyshev and Zagier introduce what we will refer to as {\it Frobenius constants} $\kappa_{\rho, n}$  associated to an ordinary linear differential operator $L$ with a reflection type singularity at $t=c$. For every other regular singularity $t=c'$ and a homotopy class of paths $\gamma$ joining $c'$ and $c$, constants $\kappa_{\rho,n}=\kappa_{\rho,n}(\gamma)$ describe the variation around $c$ of the Frobenius solutions $\phi_{\rho,n}(t)$ to $L$ defined near $t=c'$ and continued analytically along $\gamma$.  (Here $\rho \in \C$ are local exponents of $L$ at $t=c'$, see Definition~\ref{full-apery-defn} below.)   Golyshev and Zagier show in certain cases that the $\kappa_{\rho,n}$ are periods, and they raise the question quite generally how to describe the $\kappa_{\rho,n}$ motivically.

The purpose of this work is to develop the theory (first suggested to us by Golyshev) of {\it motivic Mellin transforms} or {\it motivic gamma functions}. Our main result (Theorem \ref{gamma-apery-thm}) relates the generating series $\sum_{n = 0}^\infty \kappa_{\rho,n} (s-\rho)^n$ to the Taylor expansion at $s=\rho$ of a generalized gamma function, which is a Mellin transform of a solution of the dual differential operator $L^\vee$. It follows from this that the numbers $\kappa_{\rho,n}$ are always periods when $L$ is a geometric differential operator (Corollary~\ref{kappas-are-periods}). 

Briefly, the content of various sections of the paper is as follows. Section \ref{sec:motivic-gamma} describes our basic approach to periods associated to local systems on an open curve. It will be technically convenient to interpret homology of local systems in terms of group homology of the fundamental group. Mellin transforms and $\Gamma$-functions are described in these terms. The difference equation satisfied by a $\Gamma$-function is proven. Section \ref{sec:gamma-monodromy} focuses on the semi-local picture as in \cite{GZ}. Our local system has a regular singularity at $0$ and a reflection type singularity at some point $c$. In contrast to op. cit., here $c$ is not necessarily the closest singularity to $0$. We fix a path from $0$ to $c$ not passing through any other singularities, and take an open connected set $V$ containing the path with punctures at $0, c$ and fundamental group $F_2$. We study $\Gamma$-functions associated to paths on $V$  and find that, up to simple ambiguities, such a function is unique. Section~\ref{sec:Apery} contains a quick sketch of the Frobenius method including the inhomogeneous solutions (\emph{higher Frobenius functions, op. cit.}) We then compute Frobenius constants of hypergeometric connections and mention a few other motivating numerical examples.  The main theorem is stated, which relates the unique gamma function found in Section~\ref{sec:gamma-monodromy} to the monodromy of Frobenius solutions. In Section~\ref{sec:Frob-def-and-gamma} we prove the main theorem. Finally, in Section~\ref{sec:Frob-near-MUM} we relate Frobenius constants of geometric connections to periods of limiting mixed Hodge structures. 

\section{Motivic $\Gamma$-functions}\label{sec:motivic-gamma}
Let $C$ be a complete, smooth algebraic curve over $\C$, and let $S\subset C$ be a non-empty, finite set of points. Let $M$ be an algebraic connection on $U:=C-S$. The de Rham cohomology of the connection, $H_{DR}^*(U,M)$ is the cohomology of the $2$-term complex of modules (placed in degrees $0, 1$) over $\Gamma(U, \sO_U)$
\[
M \xrightarrow{\nabla_M} M\otimes \Omega^1_U.
\]

Recall by definition a {\it solution} for $M$ is a horizontal section of the dual connection $\mu \in M^{\vee,\nabla^\vee=0}_{an}$. Whereas $M$ and $H^*_{DR}(U,M)$ are purely algebraic in nature, interesting horizontal sections are usually multi-valued and only defined locally analytically, so we consider the analytic connection $M^\vee_{an}$ on $U_{an}$. By coupling solutions to suitable topological chains in $C_{an}$, one defines rapid decay homology groups $H_{*,rd}(U_{an},M^\vee_{an})$, \cite{BE}, and there is a period pairing which is perfect pairing of finite dimensional vector spaces
\[H^1_{DR}(U,M)\times H_{1,rd}(U,M^\vee) \to \C.
\]
This construction is valid even when $M$ has irregular singular points. It can be used, for example, to construct the classical Bessel and confluent hypergeometric differential equations. In this paper, we will consider only the case where $M$ has regular singular points. In this case, one can ignore the rapid decay condition for homology and work with the standard topological homology of the local system of solutions. It is, however, worth remarking that the key point in the proof of the main result, theorem \ref{gamma-apery-thm}, is the adjunction property of the bracket, formula (\ref{int-by-parts}). This adjunction property does not require regular singular points. We expect that much of the theory developed here will generalize to cover Mellin transforms of solutions of (not necessarily regular) differential equations. 

 We write $\sM^\vee:=M^{\vee,\nabla^\vee=0}_{an}$ for the local system. Note that this local system can often be defined over a subfield $K\subset \C$. 
Homology can be computed over $K$, e.g., by fixing a basepoint $p\in U_{an}$ and interpreting $\sM^\vee$ as a representation of $\pi_1(U_{an},p)$ on $\sM^\vee_p$. Let $\widetilde{U}_{an} \to U_{an}$ be the universal cover, and let $C_*(\widetilde{U}_{an},K)$ be the complex of topological chains on the universal cover. Homology is then defined by coupling the chains to the representation
\[
H_*(U_{an},\sM^\vee):=H_*(C_*(\widetilde{U}_{an},K)\otimes_{K[\pi_1(U,p)]} \sM^\vee_p).
\]
Concretely, in degree $1$, the period pairing can be represented as follows. For us, $\Omega^1_U$ will always be a free, rank $1$ module. We fix $\omega\in \Omega^1_U$ a generator. A de Rham $1$-cocycle $c$ lifts to an element $m\otimes \omega \in M\otimes \Omega^1_U$. An homology class $\mu\in H_1(U,\sM^\vee)$ (to simplify notation, we no longer write the subscript ${}_{an}$) can be represented by a finite sum $\sum_j \sigma_j \otimes \ve_j $ where $\sigma_j \in \pi_1(U,p)$, $\ve_j \in \sM_p^\vee$ and $\sum_j \sigma_j \ve_j = \sum_j \ve_j$. The latter condition means that $\mu$ is a $1$-cycle (and not just a $1$-chain). The resulting period is
\[
\langle c,\mu\rangle= \sum_j \int_{\sigma_j} \langle m,\ve_j\rangle \omega.
\]

\begin{ex}\label{Picard-Fuchs-ex}
Let $f: X \to U$ be a smooth, proper map of algebraic varieties. Let $M:=H^n_{DR}(X/U)$ be the relative de Rham cohomology, endowed with the algebraic Gau\ss--Manin connection $\nabla$. Here we are totally in the realm of algebraic geometry, so if, for example, $f, X, U$ are all defined over a subfield $k\subset \C$, then our connection $M$ will be defined over $k$ as well. In the Gau\ss--Manin setup, solutions typically arise from continuously varying closed chains on the fibres. Since the homology of the fibres is a local system defined over $\Q$, we can think of $\sM^\vee_{an}$ as having a Betti structure and take $K=\Q$. 
\end{ex}
\begin{ex}\label{tensor-periods}The category of connections on $U$ has a tensor product, so we can add interest to our study by coupling e.g. a Gau\ss--Manin connection $M$ as in the previous example to one of a number of standard connections on $U$. The effect of tensoring connections is to multiply solutions appearing in the period integral. 

To avoid some technicalities, let us consider the case $U \subseteq \G_m$. We use $t$ as a coordinate on $\G_m$. Three examples are \newline\noindent
(i)(Mellin transform) Take the connection on $\sO_U$ given by $\nabla_{Mellin}(1):=s \, dt/t$. Somewhat abusively, this connection is denoted $t^s$. It has $t^{s}$ as solution. The period integrals for $M \otimes t^s$ are of the shape $\int_\sigma \langle m,\ve \rangle t^s\omega$. Our periods become functions of $s$.
\newline\noindent
(ii)(Fourier transform) Define a connection on $\sO_U$ by $\nabla_{Fourier}(1) := s \, dt$.  The solution is $e^{st}$, and period integrals for $M \otimes e^{st}$ are of the shape $\int_\sigma \langle m,\ve \rangle e^{st}\omega$. Here again $s$ is a parameter.
\newline\noindent
(iii)(Kummer connection) Let $K_t$ be the rank $2$ connection with solutions $\ve_1, \ve_2$ such that there exists $m\in M$ with $\langle m,\ve_1\rangle = 1$ and $\langle m,\ve_2\rangle = \log t$.  As an exercise, the reader can write out the connections $Sym^n(K_t)$ and describe the integrands involved in calculating periods for $M\otimes Sym^n(K_t)$. 
\end{ex}

The examples just given can be also considered in the case when $U$ is an open curve and $t \in \Gamma(U,\sO_U^\times)$ is an invertible function on $U$.  We can now give a vague definition of the object of interest:   

\bigskip

\noindent{\bf Definition~\ref{gamma-def-0}'.} \emph{A  gamma function is the function of $s$ associated to a period of the Mellin transform of a  connection $M$ on $U$. If $M$ is a Gau\ss--Manin connection, then the resulting gamma functions are called \emph{motivic}.  }

\bigskip

In this paper we shall work with connections $M$ on an open subset $U \subset \G_m$. We use $t$ as a coordinate on $\G_m$. 

Let us associate explicit gamma functions with homology classes in $H_1(U,\sM^\vee \otimes t^s)$. For that we fix a basepoint $t=p$ and consider the representation of $\pi_1(U,p)$ on the stalk 
\eq{Mellin-stalk}{
(\sM^\vee \otimes t^s)_p \cong \sM_p^\vee \otimes_K K[e^{\pm 2 \pi i s}],
}
where the homotopy group acts on the second component of the tensor product through the monodromy of $t^s$. 

\bigskip
\begin{defn}\label{gamma-def-0} \emph{Fix $m \otimes \frac{dt}{t} \in M \otimes \Omega^1_U$. A homology class $\xi \in H_1(U,\sM^\vee \otimes t^s)$ can be represented by a 1-cycle
\[
\xi \sim \sum _j \sigma_j \otimes \ve_j \otimes e^{2 \pi i s n_j},
\]
where the sum is finite, $\sigma_j$ are loops based at $p$, $\ve_j \in \sM^\vee_p$ are solutions in a neighbourhood of~$p$ and $n_j \in \Z$. The respective gamma function is given by
\eq{gamma-def}{
\Gamma_\xi(s) =  \sum_j e^{2 \pi i s n_j} \int_{\sigma_j} \langle m , \ve_j \rangle t^{s-1} dt.
}
Here we also assume that a branch of $t^s$ at the base point $p$ is fixed. It is thus the same branch in every integral in the right-hand sum, while the coefficient $e^{2 \pi i s n_j}$ accounts for the possibility of choosing different branches. 
}
\end{defn}

Note that function $\Gamma_\xi(s)$ given by~\eqref{gamma-def} is an entire function of~$s$. 

\begin{lem}\label{gamma-well-defined} The right-hand side of~\eqref{gamma-def} depends only on the homology class of $\xi$ in $H_1(U,\sM^\vee \otimes t^s)$.
\end{lem}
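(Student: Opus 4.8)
The plan is to realize $\Gamma_\xi(s)$ as the value of the period pairing between the fixed de Rham class of $m\otimes\frac{dt}{t}$ in $H^1_{DR}(U,M\otimes t^s)$ and the homology class $\xi$, and then to check directly that the right-hand side of~\eqref{gamma-def} respects the relations defining the tensor product $C_*(\widetilde U_{an},K)\otimes_{K[\pi_1(U,p)]}(\sM^\vee\otimes t^s)_p$ and annihilates boundaries. For a fixed generic value of $s$ the connection $M\otimes t^s$ is algebraic with regular singularities, so in principle the assertion is a special case of the well-definedness of the period pairing recalled above; since $\Gamma_\xi(s)$ is entire, treating generic $s$ would suffice. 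I would nevertheless argue directly on the chain level, which also makes transparent the role of the branch coefficients $e^{2\pi i s n_j}$.

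First I would pass to the universal cover. Pulling back along $\widetilde U_{an}\to U_{an}$ trivializes the local system $\sM^\vee\otimes t^s$ and renders single-valued both a chosen solution $\ve$ and the chosen branch of $t^s=e^{s\log t}$; consequently, for a $1$-chain $\tilde c\otimes v$ with $v\in(\sM^\vee\otimes t^s)_p$ the integrand $\langle m,\ve\rangle\,t^{s-1}\,dt$ lifts to a genuine single-valued holomorphic $1$-form $\tilde\eta_v$ on $\widetilde U_{an}$, and the summands of~\eqref{gamma-def} are exactly the integrals $\int_{\tilde c}\tilde\eta_v$ over lifts of the loops $\sigma_j$. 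The relations defining $\otimes_{K[\pi_1]}$ identify $\tilde c\cdot\gamma\otimes v$ with $\tilde c\otimes\gamma\cdot v$ for $\gamma\in\pi_1(U,p)$; under the deck transformation $\gamma$ the form $\tilde\eta_v$ is carried to $\tilde\eta_{\gamma v}$, because the monodromy action on the stalk~\eqref{Mellin-stalk} is precisely the analytic continuation of $\ve$ and of the branch of $t^s$, the shift of branch being recorded by $e^{2\pi i s n_j}$. Hence $\int_{\tilde c\cdot\gamma}\tilde\eta_v=\int_{\tilde c}\tilde\eta_{\gamma v}$, so~\eqref{gamma-def} is well-defined on $1$-chains.

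The heart of the matter is vanishing on boundaries. Since the coefficient module sits in degree $0$, the differential on $C_*(\widetilde U_{an},K)\otimes_{K[\pi_1]}(\sM^\vee\otimes t^s)_p$ is $\partial\otimes\mathrm{id}$, so a boundary is a sum of terms $(\partial\tilde\tau)\otimes v$ with $\tilde\tau$ a $2$-cell in $\widetilde U_{an}$. Then
\[
\Gamma_{(\partial\tilde\tau)\otimes v}(s)=\int_{\partial\tilde\tau}\tilde\eta_v=\int_{\tilde\tau}d\tilde\eta_v=0,
\]
the last equality because $\tilde\eta_v$ is a holomorphic $1$-form on the Riemann surface $\widetilde U_{an}$ and is therefore closed: writing $\tilde\eta_v=f\,dt$ with $f$ holomorphic, $d(f\,dt)=\frac{\partial f}{\partial\bar t}\,d\bar t\wedge dt=0$. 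In the group-homology presentation underlying~\eqref{gamma-def} this is the statement that $\Gamma$ kills the bar boundary of $[\sigma|\tau]\otimes v$, the loops $\sigma,\tau,\sigma\tau$ bounding such a $2$-cell upon lifting to the universal cover.

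I expect the main obstacle to be bookkeeping rather than analysis: one must set up the single-valued lifts on $\widetilde U_{an}$ so that Stokes applies unambiguously, match the bar-complex boundary with an honest topological $2$-cell (using that an open subset of $\G_m$ is a $K(\pi,1)$ with free $\pi_1$), and verify the compatibility of the monodromy action on~\eqref{Mellin-stalk} with the deck action and with the branch bookkeeping carried by the factors $e^{2\pi i s n_j}$. The analytic input—closedness of holomorphic $1$-forms together with Stokes' theorem—is immediate once these identifications are in place.
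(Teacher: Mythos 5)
Your argument is correct, but it takes a genuinely different route from the paper's. The paper passes to group homology: it identifies $H_1(U,\sM^\vee\otimes t^s)$ with $H_1(\pi_1(U,p),\sM_p^\vee\otimes_K K[e^{\pm 2\pi i s}])$ and computes in the inhomogeneous bar complex, where $2$-boundaries are generated by the expressions \eqref{2-boundary}; the vanishing of \eqref{gamma-def} on $\partial([\sigma_1,\sigma_2]\otimes\ve\otimes 1)$ is then exactly the composition formula $\int_{\sigma_1\sigma_2}\la m,\ve\ra t^{s-1}dt=\int_{\sigma_1}\la m,\ve\ra t^{s-1}dt+e^{2\pi i s n(\sigma_1)}\int_{\sigma_2}\la m,\sigma_1\ve\ra t^{s-1}dt$, i.e.\ additivity of the integral under concatenation of paths together with the bookkeeping of analytic continuation --- no Stokes theorem, no smoothness of chains. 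You instead work in $C_*(\widetilde U_{an},K)\otimes_{K[\pi_1(U,p)]}(\sM^\vee\otimes t^s)_p$ (the paper's actual definition of homology), extend the functional to all $1$-chains by lifting the integrand to a single-valued holomorphic $1$-form on $\widetilde U_{an}$, verify the $\otimes_{K[\pi_1]}$ relations via the deck-transformation/monodromy correspondence, and annihilate boundaries by closedness of holomorphic $1$-forms plus Stokes. What your route buys: it makes the period-pairing interpretation of $\Gamma_\xi(s)$ manifest (a closed form paired against cycles with local-system coefficients), and it factors the functional through the topological $H_1$ directly, without first invoking that the open curve is aspherical, which is what legitimizes the paper's replacement of topological homology by bar-complex homology. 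What the paper's route buys: it is shorter and purely formal, and it sidesteps the two technicalities you rightly flag --- Stokes over a priori merely continuous singular chains (fixable either by computing with smooth chains, or more simply by noting that on the simply connected $\widetilde U_{an}$ your closed form $\tilde\eta_v$ has a global primitive, so its integral over the closed loop $\partial\tilde\tau$ vanishes by telescoping), and the inverse/sign conventions matching the deck action on chains with the monodromy action on the stalk \eqref{Mellin-stalk}. Neither issue is a gap, but both must be nailed down for your version to be complete, whereas the paper's bar-complex computation needs nothing beyond the concatenation identity.
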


For the proof, it will be convenient to identify 
\[
H_1(U,\sM^\vee \otimes t^s) \cong H_1(\pi_1(U,p), \sM_p^\vee \otimes_K K[e^{\pm 2 \pi i s}] ).
\]
We will use the inhomogeneous bar complex $B_*[\pi_1]$ for the group $\pi_1=\pi_1(U,p)$ tensored over $\Z[\pi_1]$ with the representation $V=\sM_p^\vee \otimes_K K[e^{\pm 2 \pi i s}]$. The following formulas for the differentials in low degrees in this complex will arise in computations throughout the paper:
\eq{1-boundary}{
\partial ([g_1] \otimes v ) = \sigma_1 v - v
}
and
\eq{2-boundary}{
\partial ([g_1, g_2] \otimes v ) = [g_2] \otimes g_1 v - [g_1 g_2] \otimes v + [g_1] \otimes v
}
for any $g_1,g_2 \in \pi_1$ and $v \in V$. For 1-chains we will omit the bracket and write $[\sigma] \otimes v$ simply as $\sigma \otimes v$. 

\begin{proof}[Proof of Lemma~\ref{gamma-well-defined}.] According to~\eqref{2-boundary}, boundaries of 2-chains are generated over $K[e^{\pm 2 \pi i s}]$ by expressions of the form 
\[
\partial ([\sigma_1, \sigma_2] \otimes \ve \otimes 1) = \sigma_2 \otimes \sigma_1 (\ve \otimes 1) - \sigma_1 \sigma_2 \otimes \ve \otimes 1 + \sigma_1 \otimes \ve \otimes 1\,. 
\]
Vanishing of~\eqref{gamma-def} on such expressions is the composition formula. Namely, to integrate $\la m, \ve\ra t^{s-1} dt$ over $\sigma_1 \sigma_2$ we first integrate it over $\sigma_1$ and then integrate $\sigma_1(\la m, \ve\ra t^{s-1} dt)=\la m,\sigma_1(\ve \otimes 1)\ra t^{s-1} dt$ over $\sigma_2$. With the integer $n=n(\sigma_1) \in \Z$ such that $\sigma_1 t^s = e^{2 \pi i s \, n}t^s$, we can write this as
\[
\int_{\sigma_1 \sigma_2} \la m, \ve\ra t^{s-1} dt = \int_{\sigma_1} \la m, \ve\ra t^{s-1} dt  + e^{2 \pi i s n} \int_{\sigma_2} \la m, \sigma_1\ve\ra t^{s-1} dt\,.
\]
\end{proof}

Note that the stalk~\eqref{Mellin-stalk} is a free module over $K[e^{\pm 2 \pi i s}]$ of rank $\dim_K \sM_p^\vee = \rank(M)$. The action of $\pi_1(U,p)$ commutes with the $K[e^{\pm 2 \pi i s}]$-module structure and therefore $H_1(U, \sM^\vee \otimes t^s)$ is a $K[e^{\pm 2 \pi i s}]$-module. It is clear that evaluation
\[
\xi \mapsto \Gamma_{\xi}(s) 
\]
in~\eqref{gamma-def} is $K$-linear and commutes with multiplication by $e^{2 \pi i s}$. Therefore we obtain a $K[ e^{\pm 2 \pi i s}]$-module of gamma functions. As a module, this is the quotient of $H_1(U, \sM^\vee \otimes t^s)$ by classes for which the respective gamma functions vanish. It follows that all gamma functions are $K[e^{\pm 2 \pi i s}]$-linear combinations of a finite number of them:

\begin{prop}\label{gamma-f-g} The $K[e^{\pm 2 \pi i s}]$-module of gamma functions is finitely generated. 
\end{prop}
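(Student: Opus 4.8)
The plan is to deduce finite generation of the module of gamma functions directly from finite generation of the homology module, since the gamma-function assignment $\xi \mapsto \Gamma_\xi(s)$ is a surjection of $K[e^{\pm 2\pi i s}]$-modules onto the module of gamma functions (by the remarks preceding the statement, it is $K$-linear and commutes with multiplication by $e^{2\pi i s}$, hence is a map of $K[e^{\pm 2\pi i s}]$-modules, and it is surjective by definition). A quotient of a finitely generated module over any ring is finitely generated, so it suffices to prove that $H_1(U, \sM^\vee \otimes t^s)$ is finitely generated as a $K[e^{\pm 2\pi i s}]$-module. First I would therefore reduce the problem to this homological statement.

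For the homological input, I would compute $H_1$ using the identification $H_1(U,\sM^\vee\otimes t^s) \cong H_1(\pi_1(U,p), V)$ with $V = \sM_p^\vee \otimes_K K[e^{\pm 2\pi i s}]$, as set up for the proof of Lemma \ref{gamma-well-defined}. The key structural fact is that $U$ is an open curve, so its fundamental group $\pi_1(U,p)$ is a finitely generated free group, say on $r$ generators; consequently $U$ has the homotopy type of a finite wedge of circles, a finite $1$-dimensional CW complex. The bar (or cellular) chain complex computing group homology with coefficients in $V$ is then concentrated in degrees $0$ and $1$ and is built from \emph{finitely many} copies of $V$: in the free-group case the relevant complex reads
\eq{fg-complex}{
V^{\oplus r} \xrightarrow{\ \partial\ } V,
}
where $\partial$ is given on generators by formula \eqref{1-boundary}. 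Since $V$ itself is a finitely generated $K[e^{\pm 2\pi i s}]$-module (it is free of rank $\rank(M)$, as noted just before the statement), the chain group $V^{\oplus r}$ is finitely generated over $K[e^{\pm 2\pi i s}]$, and $H_1$ is a submodule of $V^{\oplus r}$.

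The one point requiring care is the passage from ``submodule of a finitely generated module'' to ``finitely generated,'' which is not automatic over an arbitrary ring. Here, however, the coefficient ring $K[e^{\pm 2\pi i s}] \cong K[x^{\pm 1}]$ is the ring of Laurent polynomials over a field, which is a principal ideal domain, hence Noetherian. Over a Noetherian ring submodules of finitely generated modules are finitely generated, so $H_1 \cong \ker(\partial)$, being a submodule of the finitely generated module $V^{\oplus r}$, is finitely generated. I expect this Noetherian bookkeeping to be the only genuine obstacle: everything else is formal, but one must invoke Noetherianity (equivalently, that $K[x^{\pm 1}]$ is a PID) rather than hope for freeness of the kernel or appeal to a structure theorem directly. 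Combining the two reductions — $H_1$ finitely generated because the chain complex is finite over a Noetherian ring, and the module of gamma functions a quotient of $H_1$ — yields the Proposition.
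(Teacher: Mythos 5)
Your proposal is correct and takes essentially the same route as the paper: both arguments reduce to finite generation of $H_1(U,\sM^\vee\otimes t^s)$ over the Noetherian ring $K[e^{\pm 2\pi i s}]$ by exhibiting a chain complex of finitely generated modules computing the homology, and then pass to the module of gamma functions as a quotient. The only cosmetic difference is that you exploit freeness of $\pi_1(U)$ (an open curve) to write the explicit two-term complex $V^{\oplus r}\to V$ so that $H_1$ is literally a submodule, whereas the paper appeals to $U$ having the homotopy type of a finite CW-complex and works with the chain complex of the universal cover up to homotopy equivalence.
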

\begin{proof}  As was mentioned at the beginning of this section, homology of our local system can be computed using the chain complex of the universal cover $\tilde U$:   
\[
H_*(U,\sM^\vee \otimes t^s) \cong H_*\Bigl(C_*(\widetilde{U},K)\otimes_{K[\pi_1(U,p)]} (\sM^\vee \otimes t^s)_p \Bigr).
\]
As a topological space, $U$ is homotopic to a finite CW-complex, and hence the chain complex of the universal cover $C_*=C_*(\widetilde{U},K)$ is homotopic to a complex of finitely generated $K[\pi_1]$-modules. Since the stalk representation $V=\sM^\vee_p \otimes_K K[e^{\pm 2 \pi i s}]$ is a finitely generated module over a Noetherian ring $R=K[e^{\pm 2 \pi i s}]$, $C_* \otimes_{K[\pi_1]} V$ is homotopic to a complex of finitely generated $R$-modules, so it has finitely generated homology. In particular, $H_1(U, \sM^\vee \otimes t^s)$ is finitely generated.
\end{proof}

\begin{ex}\label{baby-example} The double cover $f: \P^1_y \to \P^1_t$ given by $t=1-y^2$ is ramified at $t=1,\infty$. Removing the point $t=0$ yields
\[
C^\circ := \P^1_y \setminus \{1,-1,0,\infty\} \overset{f^\circ} \to U := \P^1_t \setminus \{0,1,\infty\}\,. \\
\]
We have $f^\circ_*\sO_{C^\circ} = \sO_U \oplus \sO_U [y]$. The line bundle $M := \sO_U [y]$ carries a connection with $\nabla_{d/dt}[y]=-\frac1{2(1-t)}[y]$. Choose a point $p \in U$ and a horizontal section of the dual bundle $\ve \in \sM^\vee_p$. Let $\sigma_0$ and $\sigma_1$ be the loops around $0$ and $1$ on $\P^1_t$. In $\sM_p^\vee \otimes_\Q \Q[e^{\pm 2 \pi i s}]$ we have $\sigma_0 (\ve \otimes 1) =\ve \otimes e^{2 \pi i s}$ and $\sigma_1 (\ve \otimes 1) = - \ve \otimes 1$. The loop $\sigma=\sigma_1 \sigma_0\sigma_1\sigma_0^{-1}$ fixes $\ve \otimes 1$, so $\xi=\sigma_0^{-1}\sigma_1\sigma_0\sigma_1 \otimes (\ve\otimes 1)$ is a 1-cycle. The associated gamma function is 
\[
\Gamma_\xi(s) = \int_{\sigma}  \la [y], \ve \ra t^{s-1} dt\,.  
\]
The 1-cycle condition here converts into the fact that $\sigma$ is a closed path along which $t^{s} \la [y], \ve \ra $ is single-valued, thus the above integral is well defined. 

Since $\ve$ is horizontal, the pairing $\la [y], \ve \ra$ is a solution to the differential operator $(1-t) \frac{d}{dt} - \frac1{2}$, and hence it is a constant multiple of $(1-t)^{-1/2}$. Possibly rescaling $\ve$, the following beta integral is a motivic gamma function:
\[\bal
\Gamma_\xi(s) &= \int_{\sigma_1 \sigma_0\sigma_1\sigma_0^{-1}} t^{s-1} (1-t)^{-1/2} dt = \int_0^1 - \int_1^0 - e^{2 \pi i s} \int_0^1 + e^{2 \pi i s} \int_1^0 \\
&= 2 (1-e^{2 \pi i s}) \int_0^1 t^{s-1} (1-t)^{-1/2} dt = 2 (1-e^{2 \pi i s}) \frac{\Gamma(s)\Gamma(1/2)}{\Gamma(s+1/2)}\,.
\eal\]
Note that $\Gamma_\xi(s)$ is an entire function of $s$. This is a feature of all our generalized gamma functions, as one can easily see from their definition~\eqref{gamma-def}. 
\end{ex}
\begin{remark}The reader will have noticed that the classical gamma function $\Gamma(s)=\int_0^\infty t^{s-1} e^{-t} dt$ is not a motivic gamma function. The connection in this case is $\nabla(1)=dt$ which has an irregular singular point at $t=\infty$. The notion of period can be extended to the irregular case using some form of ``rapid decay'' homology ( \cite{BE}, \cite{FJ}). The classical path of integration from $0$ to $\infty$ is not allowed because the Mellin connection $\nabla_{Mellin}(1)=sdt/t$ has a singular point at $t=0$. However, if we replace $[0,\infty]$ with a ``keyhole'' path starting at $\infty$, following the positive real axis to $+\ve$, looping counterclockwise about $0$, and then going back to $+\infty$, the resulting ``period'', $(e^{2\pi is}-1)\Gamma(s)$ suggests a natural generalization of motivic gammas to the irregular case. Notice that again the period is an entire function of $s$. 
\end{remark}
\bigskip

We will say a function $f(s)$ satisfies a difference equation of length~$a$ if there exist polynomials $p_0(s), \ldots,p_a(s)$ such that
\[
0 = \sum_{j=0}^a p_j(-s-j)f(s+j).
\]
Note that if $f(s)$ satisfies a difference equation  of length~$a$ and $g(s)$ is a polynomial, then $g(s)f(s)$ also satisfies a difference equation  of length~$a$. Indeed, $\sum_{j=0}^a q_j(-s-j)g(s+j)f(s+j)=0$ with $q_j(s)=p_j(s) \prod_{0 \le k \le a, k \ne j} g(-s-j+k)$.

It is a general fact that Mellin transforms satisfy difference equations (see~\cite{LS}). Remember that our choice is to fix $m\otimes\frac{dt}{t} \in M \otimes \Omega_U^1$, in which case we obtain a finitely generated $K[e^{\pm 2 \pi i s}]$-module of gamma functions indexed by $\xi \in H_1(U,\sM^\vee \otimes t^s)$. All of them satisfy the same difference equation which can be found as follows.Let $r = \rank(M)$ and consider the derivation $D=t \, d/dt$. Then there exist $q_0,\ldots,q_r \in \sO_U$ such that the differential operator
\[
L = q_0(t) \, D^r + q_1(t) \, D^{r-1} + \ldots + q_r(t) 
\]
annihilates $m$. Here and throughout the paper we shall adopt the convention that $D$ acts on $M$ via $\nabla_M(D)$; for example, $Lm=0$ means that $\sum_{j=0}^r q_{r-j} \nabla_M(D)^j \, m = 0$.
Observe that for a solution $\ve \in \sM^\vee$ the analytic function $\phi = \langle m, \ve \rangle \in \sO^{an}$ satisfies the differential equation $L \phi = 0$.

\begin{prop}\label{functional_equation} Assume that $q_j=q_j(t) \in \C[t]$ for $0 \le j \le r$ and rearrange terms in the differential operator
\eq{q-to-p}{
L = \sum_{j=0}^r q_{r-j}(t) D^j = p_0(D) + t \, p_1(D) +  \ldots + t^a \, p_a(D)
}
with polynomials $p_0,\ldots,p_a \in \C[D]$ of degree at most $r$. Then for every homological class $\xi $ the respective gamma function $\Gamma_\xi(s)$ satisfies the difference equation
\eq{difference-eq}{
\sum_{j=0}^a p_j(-s-j)\Gamma_\xi(s+j) = 0\,.
}
\end{prop}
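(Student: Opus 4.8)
The plan is to reduce the difference equation to the single trivial identity $\int_\sigma (L\phi)\,t^{s-1}\,dt = 0$, read off what the Mellin transform does to the two building blocks of $L$ --- the operator $D = t\,d/dt$ and multiplication by $t$ --- and then control the resulting boundary terms using the $1$-cycle condition $\partial\xi = 0$. First I would record two elementary ``exchange'' rules. For any loop $\sigma$ based at $p$, any function $\phi$, and any exponent $s'$, multiplication by $t$ shifts the exponent,
\[
\int_\sigma (t\phi)\, t^{s'-1}\,dt = \int_\sigma \phi\, t^{(s'+1)-1}\,dt,
\]
while integration by parts turns $D$ into multiplication by $-s'$ up to a boundary term,
\[
\int_\sigma (D\phi)\, t^{s'-1}\,dt = \bigl[\phi\, t^{s'}\bigr]_\sigma - s'\int_\sigma \phi\, t^{s'-1}\,dt,
\]
since $(D\phi)t^{s'-1} = \phi'\,t^{s'} = \tfrac{d}{dt}(\phi t^{s'}) - s'\phi t^{s'-1}$. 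Iterating the second rule gives, for any $p\in\C[D]$,
\[
\int_\sigma (p(D)\phi)\, t^{s'-1}\,dt = p(-s')\int_\sigma \phi\, t^{s'-1}\,dt + (\text{boundary}),
\]
where the boundary part is a $\C[s']$-combination of the terms $\bigl[(D^m\phi)\,t^{s'}\bigr]_\sigma$ with $0\le m <\deg p$.

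Next I would apply these rules to $\phi_\ell := \la m,\ve_\ell\ra$ for each loop $\sigma_\ell$ appearing in a representative $\xi \sim \sum_\ell \sigma_\ell\otimes\ve_\ell\otimes e^{2\pi i s n_\ell}$. Because $\ve_\ell$ is horizontal and $Lm = 0$, each $\phi_\ell$ solves $L\phi_\ell = 0$ (the observation made just before the statement). Writing $L = \sum_{j=0}^a t^j p_j(D)$ as in \eqref{q-to-p} and integrating $L\phi_\ell$ against $t^{s-1}dt$, the shift rule converts the factor $t^j$ into the exponent shift $s\mapsto s+j$ and the exchange rule converts $p_j(D)$ into the scalar $p_j(-s-j)$:
\[
0 = \int_{\sigma_\ell}(L\phi_\ell)\,t^{s-1}\,dt = \sum_{j=0}^a p_j(-s-j)\int_{\sigma_\ell}\phi_\ell\, t^{(s+j)-1}\,dt + B_\ell,
\]
where $B_\ell$ collects all boundary terms. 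Multiplying by $e^{2\pi i s n_\ell}$, summing over $\ell$, and using that $e^{2\pi i j n_\ell}=1$ for $j,n_\ell\in\Z$, the $j$-th non-boundary term assembles into $p_j(-s-j)\,\Gamma_\xi(s+j)$ by \eqref{gamma-def}. Thus the claimed equation \eqref{difference-eq} is equivalent to the vanishing of $\sum_\ell e^{2\pi i s n_\ell}B_\ell$.

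The hard part, and the only step that genuinely uses that $\xi$ is a cycle rather than a chain, is this boundary cancellation. Each boundary term has the form $\bigl[(D^m\phi_\ell)\,t^{s'}\bigr]_{\sigma_\ell}$ with $s' = s+j$; writing $D^m\phi_\ell = \la \nabla_M(D)^m m,\ve_\ell\ra$ and using that $\nabla_M(D)^m m$ is a single-valued algebraic section while $\ve_\ell$ and $t^{s'}$ acquire the monodromy of $\sigma_\ell$, the difference of endpoint values is
\[
\bigl[(D^m\phi_\ell)\,t^{s'}\bigr]_{\sigma_\ell} = \Bigl(\la \nabla_M(D)^m m,\sigma_\ell\ve_\ell\ra\, e^{2\pi i s\, n(\sigma_\ell)} - \la \nabla_M(D)^m m,\ve_\ell\ra\Bigr)(p)\; t^{s'}\big|_p,
\]
again using $e^{2\pi i j\, n(\sigma_\ell)}=1$. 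I would then weight by $e^{2\pi i s n_\ell}$, sum over $\ell$, and pull the fixed section $\nabla_M(D)^m m$ out of the $K[e^{\pm 2\pi i s}]$-linear pairing; what remains inside is precisely $\sum_\ell\bigl(\sigma_\ell(\ve_\ell\otimes e^{2\pi i s n_\ell}) - \ve_\ell\otimes e^{2\pi i s n_\ell}\bigr)$, which is $\partial\xi$ computed via \eqref{1-boundary} and hence vanishes. So every boundary term dies uniformly in $m$ and $j$, and \eqref{difference-eq} follows.

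The main thing to get right is the bookkeeping of branches: that continuation of $\la m,\ve_\ell\ra$ along $\sigma_\ell$ produces $\la m,\sigma_\ell\ve_\ell\ra$ (the convention used in the proof of Lemma~\ref{gamma-well-defined}), that the fixed branch of $t^s$ at $p$ returns multiplied by $e^{2\pi i s\, n(\sigma_\ell)}$ after traversing $\sigma_\ell$, and that the integer shifts $j$ never disturb these monodromy factors since $e^{2\pi i j n}=1$ whenever $j,n\in\Z$. Once these are fixed, the vanishing of the weighted boundary sum is literally the cycle condition $\partial\xi=0$ paired against $\nabla_M(D)^m m$, which is what makes the argument work.
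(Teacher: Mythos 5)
Your proposal is correct and is essentially the paper's own argument: the paper likewise reduces everything to the two Mellin exchange rules (multiplication by $t$ becomes the shift $s\mapsto s+1$, and $D$ becomes multiplication by $-s$) and uses the $1$-cycle condition $\partial\xi=0$, paired against algebraic sections of $M$, to dispose of the endpoint contributions coming from the fundamental theorem of calculus. The only difference is bookkeeping: the paper introduces the functional $m'\mapsto\Gamma_\xi(m',s)$ on $M$ and invokes the cycle condition once per application of $D$ (so that boundary terms never appear explicitly), whereas you integrate by parts loop-by-loop and cancel the accumulated boundary terms $\bigl[(D^m\phi_\ell)t^{s+j}\bigr]_{\sigma_\ell}$ at the end --- but the cancellation mechanism, namely $\partial\xi=0$ evaluated against $\nabla_M(D)^m m$, is identical.
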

\begin{proof} For any $m' \in M$ let us denote 
\[
\Gamma_\xi(m',s) = \sum_j e^{2 \pi i s n_j} \int_{\sigma_j} \langle m', \ve_j \rangle t^s dt/t,
\]
which is just the gamma function~\eqref{gamma-def} corresponding to $m' \otimes dt/t \in M \otimes \Omega^1_U$. Since $\xi$ is a 1-cycle, the condition
\[
\partial \xi = \sum_j e^{2 \pi i s n_j}(\sigma_j-1) (\ve_j \otimes 1) = 0
\]
together with the fundamental theorem of calculus imply
$$ \sum_j e^{2 \pi i s n_j} \int_{\sigma_j} D(\langle m' , \ve_j \rangle t^{s}) dt/t = 0.
$$
Expanding out, using the fact that $\ve_j$ is a horizontal section of $M^\vee$, we get
$$-s \, \Gamma_\xi(m',s)=\sum_j e^{2 \pi i s n_j} \int_{\sigma_j} \langle  D m' , \ve_j \rangle t^{s} dt/t=\Gamma_\xi(D m',s).
$$
One also has trivially
$$\Gamma_\xi(t \, m',s) = \Gamma_\xi(m',s+1).
$$
Since $\sum_{j=0}^r q_j(t) D^j m=0$, using formula~\eqref{q-to-p} we get
\ml{}{0 = \Gamma_\xi(\sum_{j=0}^r q_j(t) D^j m,s) = \sum_{j=0}^a\Gamma_\xi(p_j(D)m,s+j) = \\
\sum_{j=0}^a p_j(-s-j)\Gamma_\xi(m,s+j).
}
\end{proof}

\section{Monodromy and existence of gamma functions}\label{sec:gamma-monodromy}

The setting is as in the first section: we are given an algebraic connection $M$ with regular singularities on an open curve $U \subset \G_m$; the coordinate on $\G_m$ is $t$. The local system of solutions is denoted by $\sM^\vee = (M_{an}^\vee)^{\nabla^\vee=0}$. This local system is defined over a field $K \subseteq \C$. 

An important class of motivic $\Gamma$-functions arises when $\xi$ lifts to a class in $H_1(V, \sM^\vee\otimes t^s)$, where $V\subset U$ is an open in the $\C$-analytic topology neighbourhood of a path joining $t=0$ to another singular point $c  \ne 0,\infty$ of $M$. Namely, we make the following

\begin{assum}\label{V-assum} We have $V=V_0\cup V_c$ where $V_0$ and $V_c$ are punctured disks centered at $0$ and $c$ respectively, and $V_0\cap V_c$ is contractible.

We fix a base point $p \in V_0 \cap V_c$ and denote by $\sigma_0$ and $\sigma_c$ the loops around $0$ and $c$ respectively.
\end{assum}

\noindent Under this assumption $\pi_1(V,p)$ is a free group on the generators $\sigma_0$ and $\sigma_c$. The image of the corestriction map 
\eq{Cor}{
Cor: \; H_1(V_c, \sM^\vee \otimes t^s) \to H_1(V, \sM^\vee\otimes t^s)
} 
is the $K[e^{\pm 2\pi i s}]$-submodule generated by classes of cycles $\xi = \sigma_c \otimes \ve \otimes 1$ where $\ve \in \sM_p^\vee$ is a $\sigma_c$-invariant solution. The respective gamma functions can be evaluated easily. Namely, since $c$ is a regular singularity, the analytic function $\langle m, \ve \rangle$ is meromorphic at $t=c$. Expanding it in the Laurent series  $\langle m, \ve \rangle = a_{-k}(t-c)^{-k} + a_{1-k}(t-c)^{1-k} + \ldots$ we find that
\[
\Gamma_{\xi}(s) = \int_{\sigma_c} t^{s-1} \langle m, \ve \rangle dt = \sum_{j=1}^k a_{-j} \binom{s-1}{j-1} c^{s-j},
\]
which is simply $c^s$ multiplied by a polynomial in $s$. Note that this gamma function is identically zero if $\langle m, \ve \rangle$ is holomorphic at $t=c$. Thus one can always multiply $m$ by a power of $(t-c)$ to ensure that the gamma functions corresponding to elements in the image of~\eqref{Cor} vanish. 

\begin{lem}\label{gamma-generators-prop} Let $d = \dim_K Image(\sigma_c-1|\sM^\vee_p)$ be the rank of the variation of the local monodromy of $\sM^\vee$ around $t=c$. The cokernel of~\eqref{Cor} is a free $K[e^{\pm 2 \pi i s}]$-module of rank $d$.
\end{lem}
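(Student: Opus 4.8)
The plan is to compute both homology groups completely explicitly, exploiting that $\pi_1(V,p)$ is free on $\sigma_0,\sigma_c$ (this is where Assumption~\ref{V-assum} enters, via van Kampen), and then to recognize the cokernel as an honest submodule of a free module over the PID $R=K[e^{\pm 2\pi i s}]$. Write $V:=(\sM^\vee\otimes t^s)_p=\sM_p^\vee\otimes_K R$, a free $R$-module of rank $r=\rank(M)$, and let $T_0,T_c$ be the local monodromies of $\sM^\vee$ at $0$ and $c$. Since $t^s$ acquires the factor $e^{2\pi i s}$ along $\sigma_0$ and is unramified at $c$, the generators act on $V$ by $\sigma_0=e^{2\pi i s}T_0$ and $\sigma_c=T_c$. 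Because $\pi_1(V,p)$ is free it has cohomological dimension one, so $\Z$ has the length-one free resolution over $\Z[\pi_1]$ sending the two basis vectors to $\sigma_0-1$ and $\sigma_c-1$; tensoring with $V$ and reading off the differentials~\eqref{1-boundary}, \eqref{2-boundary} gives
\[
H_1(V,\sM^\vee\otimes t^s)=\ker\bigl(\psi:V^2\to V\bigr),\qquad \psi(v_0,v_c)=(\sigma_0-1)v_0+(\sigma_c-1)v_c.
\]
In the same way $H_1(V_c,\sM^\vee\otimes t^s)=\ker(\sigma_c-1\mid V)$, and the discussion preceding the lemma identifies $\im(Cor)$ with $0\oplus\ker(\sigma_c-1\mid V)\subset V^2$.

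The next step is to eliminate the second coordinate. Projecting $\ker\psi$ onto its first factor $v_0\mapsto v_0$ has kernel exactly $\{(0,v_c):(\sigma_c-1)v_c=0\}=0\oplus\ker(\sigma_c-1\mid V)=\im(Cor)$, and image $\{v_0:(\sigma_0-1)v_0\in\im(\sigma_c-1\mid V)\}$. Hence, as $R$-modules,
\[
\operatorname{coker}(Cor)\;\cong\;A^{-1}(W),\qquad A:=\sigma_0-1=e^{2\pi i s}T_0-1,\quad W:=\im(\sigma_c-1\mid V).
\]
Since $T_c-1$ is $K$-linear and $R$ is free over $K$, flatness gives $W=\im(T_c-1\mid\sM_p^\vee)\otimes_K R$, a free direct summand of $V$ of rank $d$; consequently $A^{-1}(W)=\ker(\bar A:V\to V/W)$ with $V/W$ free of rank $r-d$.

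The finish is pure commutative algebra. As $R=K[e^{\pm 2\pi i s}]=K[x,x^{-1}]$ is a PID, the submodule $\ker(\bar A)\subseteq V$ is automatically free, so only its rank remains to be determined. I would tensor with the fraction field $\mathcal K=K(e^{2\pi i s})$ and observe that $A\otimes\mathcal K$ is invertible: $\det(e^{2\pi i s}T_0-1)$, viewed as a polynomial in $x=e^{2\pi i s}$, has constant term $\det(-1)=(-1)^r\neq 0$, hence is a nonzero element of $\mathcal K$. Writing $\bar A=q\circ A$ with $q$ the quotient onto $V/W$, one gets $\ker(\bar A\otimes\mathcal K)=(A\otimes\mathcal K)^{-1}(W_{\mathcal K})$, which has dimension $\dim_{\mathcal K}W_{\mathcal K}=d$ because $A\otimes\mathcal K$ is an isomorphism. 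Therefore $\operatorname{coker}(Cor)$ is free of rank $d$.

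The conceptual engine here is robust: a submodule of a free module over the PID $K[e^{\pm 2\pi i s}]$ is free, and $e^{2\pi i s}T_0-1$ is generically invertible precisely because its characteristic-type determinant has nonvanishing constant term. The only genuinely delicate points are bookkeeping: pinning down the $\pi_1$-action so that the $e^{2\pi i s}$-twist sits on $\sigma_0$ alone, and verifying that $\im(Cor)$ is \emph{exactly} the kernel of the first-coordinate projection, so that the quotient really equals $A^{-1}(W)$. I do not anticipate any obstacle beyond this, since each identification is forced once the free-group resolution is in place.
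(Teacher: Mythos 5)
Your proof is correct, and it reaches the paper's key intermediate object by a genuinely different route. The paper argues topologically: it takes the long exact sequence of the pair $(V,V_c)$ with coefficients in $\sL=\sM^\vee\otimes t^s$, identifies $\operatorname{coker}(Cor)$ with the kernel of the connecting map $b\colon H_1(V,V_c;\sL)\to H_0(V_c;\sL)$, uses excision to replace $H_1(V,V_c;\sL)$ by $H_1(V_0,p;\sL)\cong\sL_p$, and a diagram chase then exhibits $\ker(b)$ as the kernel of the composition $\sL_p\xrightarrow{\sigma_0-1}\sL_p\to\sL_p/(\sigma_c-1)\sL_p$ --- exactly your $A^{-1}(W)$ --- which it then rewrites, using injectivity of $\sigma_0-1$, as the intersection $(\sigma_0-1)\sL_p\cap(\sigma_c-1)\sL_p$ inside the free rank-$d$ module $(\sigma_c-1)\sL_p$. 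You instead compute $H_1$ of $V$ and of $V_c$ directly from the two-term free resolution of $\Z$ over the group ring of the free group (in effect Fox calculus), identify $Cor$ on the nose as $0\oplus\ker(\sigma_c-1)$, and project onto the first coordinate; this bypasses relative homology, excision, the diagram chase, and the paper's Lemma~\ref{free-H-1-lemma}, at the modest cost of checking that $Cor$ is compatible with the chosen resolutions (which is forced, as you say, since the generator of $\pi_1(V_c,p)$ maps to the basis vector corresponding to $\sigma_c-1$). Your finish is also more complete than the paper's: the paper ends tersely with ``since $\sigma_0-1$ is injective, $\ker(b)$ has rank $d$'' and leaves freeness (a submodule of a free module over the PID $K[e^{\pm 2\pi i s}]$) implicit, whereas you spell out both the PID step and the rank count, with the concrete justification that $\det(e^{2\pi i s}T_0-1)$ is a nonzero Laurent polynomial because its constant term is $(-1)^r$. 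A small structural bonus of your formulation: by working with the preimage $A^{-1}(W)$ rather than its image under $\sigma_0-1$, you never need injectivity of $\sigma_0-1$ at all, only its generic invertibility over the fraction field.
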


In the course of proof of this lemma the following fact will be used: 

\begin{lem}\label{free-H-1-lemma} Write $\Z \cong u^\Z$ for the free abelian group on one generator, written multiplicatively. Let $A$ be an abelian group, and suppose we are given $\phi: A \to A$ an automorphism. We view $A$ as a $u^\Z$-module, with $u$ acting as $\phi$. Then 
$$H_1(u^\Z, A) \cong A^{\phi=id}. 
$$
\end{lem}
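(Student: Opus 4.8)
The plan is to compute the group homology $H_*(u^\Z, A)$ directly from a short free resolution of the trivial module over the group ring $R := \Z[u^\Z] = \Z[u, u^{-1}]$, rather than wading through the bar complex. Since $u^\Z$ is infinite cyclic, the trivial $R$-module $\Z$ carries the length-one free resolution
\[
0 \to R \xrightarrow{\,\cdot (u-1)\,} R \xrightarrow{\ \varepsilon\ } \Z \to 0,
\]
where $\varepsilon$ is the augmentation sending $u \mapsto 1$. By definition $H_*(u^\Z, A) = \mathrm{Tor}^R_*(\Z, A)$, which I would compute by deleting the $\Z$ term and tensoring the remaining two-term complex with $A$ over $R$ (recall that $A$ is a genuine $R$-module because $\phi$ is invertible, so $u^{-1}$ acts as $\phi^{-1}$).

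Applying $-\otimes_R A$ and using the canonical identification $R \otimes_R A \cong A$, the complex becomes
\[
0 \to A \xrightarrow{\ \phi - \mathrm{id}\ } A \to 0
\]
concentrated in homological degrees $1$ and $0$, because multiplication by $u-1$ on $R$ induces $a \mapsto u\cdot a - a = \phi(a) - a$ after this identification. Reading off homology immediately gives $H_0(u^\Z, A) = \mathrm{coker}(\phi - \mathrm{id})$ and, in the degree of interest,
\[
H_1(u^\Z, A) = \ker(\phi - \mathrm{id}) = \{\, a \in A : \phi(a) = a \,\} = A^{\phi = \mathrm{id}},
\]
which is exactly the claim.

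The only point that genuinely needs checking — and hence the main, quite mild, obstacle — is the exactness of the displayed resolution. Injectivity of $\cdot(u-1)$ holds because $R$ is an integral domain and $u-1 \ne 0$, while exactness at the middle term is the standard fact that the augmentation ideal of $\Z[u^\Z]$ is the principal ideal generated by $u-1$. I note that this is precisely the resolution arising from the cellular chains of the universal cover $\R \to S^1$ with its deck-transformation action, so the boundary map $\phi - \mathrm{id}$ coincides with the degree-one differential $\partial(\sigma \otimes v) = \sigma v - v$ recorded in formula~\eqref{1-boundary}; one may equally run the argument inside the bar complex and arrive at the identical two-term complex.
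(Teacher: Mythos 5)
Your proof is correct, but it takes a genuinely different route from the paper's. The paper works entirely inside the inhomogeneous bar complex: using the boundary formulas \eqref{1-boundary} and \eqref{2-boundary}, it reduces an arbitrary $1$-chain modulo boundaries to the normal form $u \otimes a$, and then observes that such a chain is a cycle precisely when $\phi(a)=a$. You instead replace the bar resolution by the minimal free resolution $0 \to R \xrightarrow{\cdot(u-1)} R \to \Z \to 0$ of the trivial module over $R = \Z[u^{\pm 1}]$ and read off $\mathrm{Tor}^R_*(\Z,A)$; the two facts you must check (injectivity of multiplication by $u-1$, since $R$ is a domain, and the standard fact that the augmentation ideal is $(u-1)$) are both correct and standard. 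What your approach buys: it identifies all the homology at once ($H_1 \cong \ker(\phi-\mathrm{id})$, $H_0 \cong \mathrm{coker}(\phi-\mathrm{id})$, $H_n = 0$ for $n \ge 2$), and the isomorphism is automatic in both directions; by contrast, the paper's normal-form reduction, as written, exhibits a surjection $A^{\phi=\mathrm{id}} \to H_1(u^\Z,A)$, and seeing that no nonzero cycle $u \otimes a$ is itself a boundary requires an extra moment of thought (or, indeed, exactly your resolution argument). What the paper's approach buys: the explicit bar-complex manipulations with \eqref{1-boundary} and \eqref{2-boundary} are precisely the ones reused in later arguments (e.g.\ in the proofs of Lemma~\ref{gamma-well-defined} and Lemma~\ref{annlem}), so proving the lemma that way keeps the whole paper in a single formalism and serves as a warm-up for those computations. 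Your closing remark that the small complex also arises from the cellular chains of the universal cover $\R \to S^1$ is a nice way to see why the resolution is the natural one, though strictly speaking the comparison with the bar complex is a separate (standard) point, not needed for your argument.
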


\begin{proof}We compute in the bar complex using~\eqref{1-boundary} and~\eqref{2-boundary}: 
\[\bal
&\partial (u^j \otimes a) = u^ja-a, \\
&\partial([u^j,u^k]\otimes a) = u^k\otimes u^ja - u^{j+k}\otimes a + u^j\otimes a.
\eal\]
Clearly, any $1$-chain $\sum_i g_i\otimes a_i$ is equivalent modulo boundaries of $2$-chains as above to a $1$-chain of the form $1 \otimes a_0 + u \otimes a_1 + u^{-1} \otimes a_{-1}$. Similarly, taking $j=k=0$ shows that $1$-chains $1\otimes a$ are boundaries, and $u^{-1}\otimes a \sim -u\otimes ua$. In this way, any $1$-chain is equivalent to a $1$-chain of the form $u \otimes a$.  Since 
$$\partial( u\otimes a)  = \phi(a) - a,
$$
this latter 1-chain is a 1-cycle if and only if $a \in A^{\phi=id}$. 
\end{proof}

\begin{proof}[Proof of Lemma~\ref{gamma-generators-prop}.]
Consider the long exact sequence for the relative homology with coefficients in $\sL = \sM^\vee \otimes t^s$:
\eq{rel-hom-les}{
\ldots \to H_1(V_c;\sL) \overset{Cor}{\to} H_1(V;\sL) \to H_1(V,V_c;\sL) \overset{b}\to H_0(V_c;\sL) \to \ldots
}
From exactness, the cokernel of $Cor$ is isomorphic to the kernel of the connecting map $b$. We have a diagram \minCDarrowwidth.1cm
\eq{excision}{\begin{CD}0 @>>>  H_1(V_0,\sL) @>>> H_1(V_0,p;\sL) @>>> H_0(p,\sL)  @>>> H_0(V_0,\sL)@>>> 0 \\
@. @VVV @V\cong V \text{excision} V @VV\alpha V \\
@. H_1(V,\sL) @>>> H_1(V,V_c;\sL) @>> b > H_0(V_c,\sL) 
\end{CD}
}
With the help of Lemma~\ref{free-H-1-lemma}, the top line is identified with the exact sequence
$$0 \to   \sL_p\xrightarrow{\sigma_0-1} \sL_p \to \sL_p/(\sigma_0-1)\sL_p \to 0.
$$
(To see this, one can, for example, think of $H_1$ as being given by $1$-chains coupled to sections of $\sL$. Chains with boundary at $p$ yield relative homology classes. Note $\sigma_0$ involves multiplication by $\exp(2\pi is)$, so $\sigma_0-1$ is injective.) The map $\alpha: \sL_p \to \sL_p/(\sigma_c-1)\sL_p$ is the evident one induced from the inclusion $p\in V_c$. A diagram chase identifies $Ker(b)$ with the kernel of the composition
$$\sL_p \xrightarrow{\sigma_0-1} \sL_p \to \sL_p/(\sigma_c-1)\sL_p 
$$
which is $ (\sigma_0-1)\sL_p\cap (\sigma_c-1)\sL_p\subset  (\sigma_c-1)\sL_p$.  Write $I_c:=\text{Image}(\sigma_c-1: \sM^\vee_p \to \sM^\vee_p)$. By assumption $I_c$ is a vector space of dimension $d$ over $K$, and we have 
$$(\sigma_c-1)\sL_p = I_c\otimes_K K[e^{\pm 2\pi is}],
$$ 
a $K[e^{\pm 2\pi is}]$-module of rank $d$. Since $\sigma_0-1$ is an injective map of $K[e^{\pm 2\pi is}]$-modules, it follows that $\ker(b)$ is a $K[e^{\pm 2\pi is}]$-module of rank $d$. \end{proof}

From now on we will focus on the case when Lemma~\ref{gamma-generators-prop} guarantees there is a unique generator of the module of gamma functions.  

\begin{defn}\label{c-assum} A regular singular point $c$ of $M$ will be called a \emph{reflection point} if the variation of the local monodromy of $\sM^\vee$ around $c$ has one dimensional image.   

At the base point $p$ we fix a non-zero solution $\delta \in \sM^\vee_p$ spanning the image $(\sigma_c-1)\sM^\vee_p$.
\end{defn}

\begin{ex}[vanishing cycle] A conifold point\footnote{the term {\it conifold point} seems to have passed from string physics to math. We will take it to refer to a fibre which contains a non-degenerate double point and no other singularities, \cite{G}.} of a family of algebraic varieties provides an example of a reflection point, in which case $\sM$ is self-dual and one can take $\delta$ to be the vanishing cycle at $c$. By the Picard--Lefschetz theorem the variation of the monodromy around $c$ satisfies
\[
(\sigma_c-1)\ve = \pm \la \ve, \delta \ra \delta
\]
for any section $\ve \in \sM$. When fibres of the family have even dimension we have $\la \delta, \delta \ra=\pm 2$, $\sigma_c$ is semisimple on $\sM$ with $\sigma_c^2=1$ and $\sigma_c \delta = - \delta$. In the case of odd dimensional fibres one has $\la \delta, \delta \ra =0$ and $\sigma_c \delta = \delta$.
\end{ex}

\begin{lem}\label{annlem} Let $c \ne 0,\infty$ be a reflection point for $M$ as in Definition~\ref{c-assum}. Let $V \subset U_{an}$ be a neighbourhood of a path between $0$ and $c$ satisfying Assumption~\ref{V-assum}. Let $\ve \in \sM_p^\vee$ be such that $(\sigma_c-1)\ve = \delta$. For every relation 
\eq{rels}{(\sum_m\lambda_m \sigma_0^{m})\delta = 0 \quad (\text{ a finite sum with } \lambda_m \in K)
}
the element 
\eq{xi}{\xi = \sum_m \lambda_m\sigma_0^m\otimes\delta\otimes e^{-2\pi ims}+\sigma_c\otimes\ve\otimes \sum_m \lambda_m e^{-2\pi ims}
}
is a $1$-cycle with coefficients in $\sM_p^\vee \otimes_K K[e^{\pm 2 \pi i s}]$. The resulting map to the homology of the local system $\sL = \sM^\vee \otimes t^s$
\eq{ann}{ \text{Ann}_{K[\sigma_0^{\pm 1}]}(\delta) \to H_1(V,\sL)
}
 is a homomorphism of $K[T^{\pm 1}]$-modules, where $T$ acts via multiplication by $\sigma_0$ on relations \eqref{rels} and by multiplication by $e^{2\pi is}$ on homology. The group $H_1(V, \sL)$ is spanned by the images of $H_1(V_c,\sL)$ and \eqref{ann}. 
\end{lem}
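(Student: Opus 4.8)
The plan is to treat the three assertions in turn, fixing first the action of $\pi_1(V,p)$ on the stalk $\sL_p=\sM_p^\vee\otimes_K K[T^{\pm 1}]$, where $T=e^{2\pi i s}$: the loop $\sigma_0$ acts as $T\cdot h$, with $h$ the monodromy of $\sM^\vee$ about $0$ (it applies $h$ and multiplies the fixed branch of $t^s$ by $T$), while $\sigma_c$ acts through its monodromy on $\sM_p^\vee$ alone, since $t^s$ is single-valued near $c$. With this in hand, verifying that $\xi$ in \eqref{xi} is a cycle is a one-line application of \eqref{1-boundary}. The crucial cancellation is that $\sigma_0^m(\delta\otimes T^{-m})=(h^m\delta)\otimes 1$, so that the relation \eqref{rels} forces $\sum_m\lambda_m\,\sigma_0^m(\delta\otimes T^{-m})=(\sum_m\lambda_m h^m\delta)\otimes 1=0$; hence $\partial$ of the first sum in \eqref{xi} equals $-\delta\otimes\sum_m\lambda_m T^{-m}$, and this is cancelled precisely by $\partial$ of the $\sigma_c$-term, using $(\sigma_c-1)\ve=\delta$.

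For the module statement, $K$-linearity of the assignment $R\mapsto[\xi]$ is immediate from the shape of \eqref{xi}. For $K[T^{\pm1}]$-equivariance I would check directly that passing from $R$ to $\sigma_0 R$ reindexes $m\mapsto m+1$ in \eqref{xi} and, after restoring the normal form by means of the $2$-boundary formula \eqref{2-boundary}, multiplies the homology class by $T=e^{2\pi i s}$, the action asserted in the statement. This is a routine bar-complex manipulation with no real difficulty.

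The heart of the lemma is the spanning statement, and here I would lean on Lemma~\ref{gamma-generators-prop}. Its proof, via the long exact sequence \eqref{rel-hom-les} and the excision diagram \eqref{excision}, identifies $\mathrm{coker}(Cor)$ with $\ker b=(\sigma_0-1)\sL_p\cap(\sigma_c-1)\sL_p$, and since $c$ is a reflection point ($d=1$) we have $(\sigma_c-1)\sL_p=\delta\cdot K[T^{\pm1}]$, a free rank-one module. Thus the spanning assertion is equivalent to the statement that the image of \eqref{ann} fills up $\mathrm{coker}(Cor)\cong\ker b$. I would then compute both sides. For the image, reduce the $\sigma_0$-part of $\xi$ to the normal form $\sigma_0\otimes a_0$ modulo boundaries by repeatedly applying \eqref{2-boundary} (equivalently, Fox calculus for the free group $\pi_1(V,p)$); a short telescoping computation gives $(\sigma_0-1)a_0=-\delta\otimes\widehat R$ with $\widehat R:=\sum_m\lambda_m T^{-m}$, so that $[\xi]$ maps to $-\delta\otimes\widehat R\in\ker b$. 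As $R$ runs over $\mathrm{Ann}_{K[\sigma_0^{\pm1}]}(\delta)=(f)$ and $R\mapsto\widehat R$ is the ring isomorphism $K[\sigma_0^{\pm1}]\xrightarrow{\sim}K[T^{\pm1}]$ sending $\sigma_0\mapsto T^{-1}$, the image of \eqref{ann} is exactly $\widehat f\cdot\delta K[T^{\pm1}]$, where $\widehat f=f(T^{-1})$.

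Finally I would compute $\ker b$ and match it. Here the key device is to describe the cokernel $Q=\sL_p/(\sigma_0-1)\sL_p$: since $\sigma_0=Th$ with $h$ invertible, the relation $Th=1$ holds in $Q$, so $Q\cong\sM_p^\vee$ as a $K[T^{\pm1}]$-module with $T$ acting by $h^{-1}$ and $\delta\otimes T^k\mapsto h^{-k}\delta$. Consequently $\delta\mu$ lies in $(\sigma_0-1)\sL_p$ exactly when $\mu(h^{-1})\delta=0$; since $\mathrm{Ann}_{K[\sigma_0^{\pm1}]}(\delta)=(f)$ this says $\mu\in(\widehat f)$, whence $\ker b=\widehat f\cdot\delta K[T^{\pm1}]$ as well. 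Comparing with the previous paragraph gives $\mathrm{image}\,\eqref{ann}=\ker b=\mathrm{coker}(Cor)$, which is the spanning statement. The main obstacle is precisely this last matching: a priori the image of \eqref{ann} only obviously lands inside $\ker b$, and the content is that the two ideals governing them — the image of $\mathrm{Ann}(\delta)$ under $\sigma_0\mapsto T^{-1}$, and the annihilator of $\delta$ in $Q$ — coincide, which is exactly what the identification $Q\cong\sM_p^\vee$ makes transparent.
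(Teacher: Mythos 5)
Your argument is correct, and while your treatment of the first two assertions (the cycle check via \eqref{1-boundary} and the $K[T^{\pm 1}]$-equivariance, which the paper also leaves to the reader) matches the paper, your proof of the spanning statement takes a genuinely different route. The paper works entirely inside the bar complex: it puts an arbitrary $1$-cycle in the normal form $\sigma_0\otimes\bigl(\sum_n\psi_n\otimes e^{2\pi i n s}\bigr)+\sigma_c\otimes\bigl(\sum_m\gamma_m\otimes e^{2\pi i m s}\bigr)$, splits off the $\sigma_c$-invariant parts of the $\gamma_m$ (a cycle coming from $V_c$), solves the $1$-cycle condition recursively for the $\psi_n$ to extract a relation \eqref{rels}, and finally checks by a telescoping computation that the leftover cycle \eqref{tilde} is homologous to \eqref{xi}. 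You instead recycle the homological skeleton of Lemma~\ref{gamma-generators-prop}: from \eqref{rel-hom-les} and \eqref{excision} one has $\mathrm{coker}(Cor)\cong\ker b\cong(\sigma_0-1)\sL_p\cap(\sigma_c-1)\sL_p=(\sigma_0-1)\sL_p\cap\delta\,K[T^{\pm 1}]$, so it suffices to match two ideals of $K[T^{\pm 1}]$: on one side the image of $\mathrm{Ann}_{K[\sigma_0^{\pm 1}]}(\delta)$ under $\sigma_0\mapsto T^{-1}$, which is where $[\xi]$ lands because the boundary of the $\sigma_0$-part of $\xi$ equals $-\delta\otimes\widehat R$ with $\widehat R=\sum_m\lambda_m T^{-m}$ (you do not even need the telescoping for this: modifying a $1$-chain by $2$-boundaries never changes its boundary); on the other side the set of $\mu$ with $\delta\otimes\mu\in(\sigma_0-1)\sL_p$, which your identification $\sL_p/(\sigma_0-1)\sL_p\cong\sM_p^\vee$, with $T$ acting as the inverse of the monodromy $h$ of $\sM_p^\vee$ around $0$, shows is the same ideal. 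Both computations are correct, and there is no circularity since Lemma~\ref{gamma-generators-prop} precedes the present lemma. As for what each approach buys: the paper's argument is self-contained and explicit, exhibiting for every cycle a concrete homology to a cycle of the form \eqref{xi} plus a $V_c$-cycle; yours is shorter, reuses established structure, and yields slightly more, namely since $R\mapsto-\delta\otimes\widehat R$ is injective, the map \eqref{ann} induces an isomorphism of $\mathrm{Ann}_{K[\sigma_0^{\pm 1}]}(\delta)$ onto $\mathrm{coker}(Cor)$, refining the rank-one statement of Lemma~\ref{gamma-generators-prop} in the reflection-point case.
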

\begin{proof}To check that $\xi$ is a cycle, we compute
\[\bal
\partial\xi = \sum_m \lambda_m(\sigma_0^{m}-1)(\delta\otimes e^{-2\pi ims}) + (\sigma_c-1)\ve \otimes \sum_m \lambda_m e^{-2\pi ims} \\
= \sum_m \lambda_m \sigma_0^{m}(\delta\otimes e^{-2\pi i s m}) = (\sum_m \lambda_m \sigma_0^{m}\delta)\otimes 1 = 0. 
\eal\]

The $K[T^{\pm 1}]$-structure is straightforward and left for the reader. 

It remains to check that the map \eqref{ann} is surjective modulo the image of $Cor: H_1(V_c,\sL) \to H_1(V,\sL)$. Since $\pi_1(V,p)$ is a free group generated by $\sigma_0$ and $\sigma_c$, every $1$-cycle can be written modulo boundaries in the form 
\eq{chain}{\sigma_0\otimes \Big(\sum_n \psi_n\otimes e^{2\pi ins}\Big) + \sigma_c\otimes \Big(\sum_m \gamma_m\otimes  e^{2\pi ims}\Big).
}
Here $\psi_n,\gamma_m \in \sM_p^\vee$. (The point is that modifying by a boundary can remove any words in the $\sigma_c$ and $\sigma_0$. See equation \eqref{2-boundary}.) We write each $\gamma_m=\lambda_{-m}\ve + \gamma_m^{inv}$ with $\lambda_{-m} \in K$ and $\gamma_m^{inv} \in (\sM^\vee_p)^{\sigma_c=id}$. The chain \eqref{chain} is a sum of
$$\sigma_c\otimes \Big(\sum_m \gamma_m^{inv}\otimes e^{2\pi ims}\Big)
$$
which is itself a $1$-cycle on the subgroup generated by $\sigma_c$, and 
\eq{tilde}{\tilde\xi := \sigma_0\otimes\Big(\sum_{n=n_{min}}^{n_{max}}\psi_n\otimes e^{2\pi ins}\Big) + \sigma_c\otimes \ve\otimes \sum_m \lambda_{-m} e^{2\pi i m s}.
}
The $1$-cycle condition yields
\[0 = \partial\tilde\xi = \sum_n (\sigma_0\psi_{n-1}-\psi_n+\lambda_{-n}\delta)\otimes e^{2\pi ins}. 
\]
(Recall the action of $\sigma_0$ includes multiplication by $e^{2\pi is}$, \eqref{Mellin-stalk}.) This equation can be solved recursively
\[\bal
&\,\lambda_{-n} \;=\; 0,\ n<n_{min} \\ 
&\psi_{n_{min}} \;=\; \lambda_{-n_{min}}\delta \\
&\psi_{n_{min}+1} \;=\; \lambda_{-n_{min}}\sigma_0\delta+\lambda_{-n_{min}-1}\,\delta \\
&\qquad \vdots \\
0 \;=\; &\psi_{n_{max}+1} \;=\; \sum_{j=0}^{n_{max}-n_{min}+1}(\lambda_{-n_{min}-j}\,\sigma_0^{(n_{max}-n_{min}+1)-j})\delta
\eal\]
It follows that $\Big(\sum_m \lambda_m \sigma_0^m\Big)\delta = 0$. In fact, \eqref{tilde} is homologous to \eqref{xi}. To check this, we can assume $\lambda_m=0$ for $m\le 0$. For $v\in \sL_p$ we have by \eqref{2-boundary}
$$\sigma_0^m\otimes v \sim \sigma_0\otimes (1+\cdots+\sigma_0^{m-1})v,
$$
and therefore
$$\sum_m \lambda_m\sigma_0^m\otimes \delta\otimes e^{-2\pi ims} \sim \sigma_0\otimes v,
$$
with
\[\bal v &= \sum_{m>0} \lambda_m(1+\sigma_0+\cdots + \sigma_0^{m-1})(\delta\otimes e^{-2\pi ims})  = \sum_{m>0} \lambda_m\sum_{j=0}^{m-1} \sigma_0^{j}\delta\otimes e^{2\pi i(j-m)s} \\
& = \sum_{n<0}\Big(\sum_{m \ge -n}\lambda_m\sigma_0^{m+n}\delta\Big)\otimes   e^{2\pi ins} = \sum_n \psi_n\otimes e^{2\pi ins}. 
\eal\]
\end{proof}

With the help of this lemma, we can now compute the generator of the module of gamma functions. We will return to this computation again in the following sections, where we combine it with the duality for connections to show a relation between generalized gamma functions and monodromy of Frobenius solutions. As in Section~\ref{sec:motivic-gamma}, we fix an element $m \in M$ and consider gamma functions corresponding to $m \otimes dt/t \in M \otimes \Omega^1_U$ (Definition~\ref{gamma-def-0}).

\begin{prop}\label{unique-gamma} Let $c \ne 0,\infty$ be a reflection point for $M$ as in Definition~\ref{c-assum}. Let $V \subset U_{an}$ be a neighbourhood of a path between $t=0$ and $t=c$ satisfying Assumption~\ref{V-assum}. Let $R \in K[\sigma_0]$ be a polynomial of minimal degree such that $R(\sigma_0) \delta = 0$.

Possibly multiplying $m \in M$ by a power of $(t-c)$, we assume that functions $\langle m , \varepsilon \rangle$ are analytic at $t=c$ for every $\sigma_c$-invariant solution $\ve \in \sM^\vee$ and that $\langle m,\delta \rangle$ is $O(|t-c|^{\alpha-1})$ as $t \to c$ for some $\alpha>0$. Then the $K[e^{\pm 2 \pi i s}]$-module of gamma functions
\[
\Gamma_\xi(s), \quad \xi \in H_1(V, \sM^\vee \otimes t^s)
\]
is generated by 
\eq{delta-int-0-c}{
\Gamma_{\xi_0}(s) = R(e^{-2 \pi i s}) \int_0^c  \langle m, \delta \rangle t^{s-1} dt.
}
\end{prop}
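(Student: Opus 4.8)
The plan is to feed the structural description of $H_1(V,\sL)$ from Lemma~\ref{annlem} (with $\sL=\sM^\vee\otimes t^s$) into the evaluation map $\xi\mapsto\Gamma_\xi(s)$, which is $K[e^{\pm2\pi i s}]$-linear and by definition surjects onto the module of gamma functions. Lemma~\ref{annlem} states that $H_1(V,\sL)$ is spanned over $K[e^{\pm2\pi i s}]$ by the image of $Cor\colon H_1(V_c,\sL)\to H_1(V,\sL)$ together with the image of the map \eqref{ann}. Hence the module of gamma functions is spanned by the evaluations of these two families, and it remains to evaluate each.

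First I would dispose of the corestriction part. Its image is generated by classes $\sigma_c\otimes\ve'\otimes 1$ with $\ve'$ a $\sigma_c$-invariant solution, whose gamma function is $\int_{\sigma_c}\la m,\ve'\ra t^{s-1}dt$. By the first normalization of $m$ the function $\la m,\ve'\ra$ is holomorphic at $c$, and since $t^{s-1}$ is holomorphic near $c\ne 0$ as well, the integrand is holomorphic in a full disc about $c$ and the loop integral vanishes (equivalently, the Laurent-coefficient formula recorded before Lemma~\ref{gamma-generators-prop} returns $0$). So all these gamma functions are zero, and the module is spanned by the evaluations of \eqref{ann} alone. Next comes an algebraic reduction. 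Because $\sigma_0$ is invertible the minimal polynomial $R$ has nonzero constant term, so $R(\sigma_0)$ generates $\mathrm{Ann}_{K[\sigma_0^{\pm1}]}(\delta)$ as an ideal; thus this annihilator is the cyclic $K[T^{\pm1}]$-module on the class of $R$. Since \eqref{ann} is $K[T^{\pm1}]$-linear with $T$ acting by $e^{2\pi i s}$ on homology, and evaluation is $K[e^{\pm2\pi i s}]$-linear, the gamma functions coming from \eqref{ann} form the cyclic $K[e^{\pm2\pi i s}]$-module generated by $\Gamma_{\xi_0}(s)$, where $\xi_0$ is the cycle \eqref{xi} attached to the relation $R(\sigma_0)\delta=0$. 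Everything thus comes down to computing $\Gamma_{\xi_0}$.

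This last computation is the heart, and I would carry it out for $\mathrm{Re}(s)\gg 0$ and then continue analytically. Write $R(\sigma_0)=\sum_k\lambda_k\sigma_0^k$. For the $\sigma_c$-term of \eqref{xi} I shrink the loop at $c$: using $\sigma_c\ve=\ve+\delta$, the outgoing and returning radial passes differ by $\la m,\delta\ra$, while the small circle contributes nothing in the limit because the second normalization $\la m,\delta\ra=O(|t-c|^{\alpha-1})$ with $\alpha>0$ (together with the local Frobenius structure of the solutions at $c$) forces the circle integral to $0$; this yields $\int_{\sigma_c}\la m,\ve\ra t^{s-1}dt=-\int_p^c\la m,\delta\ra t^{s-1}dt$. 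For the terms $\int_{\sigma_0^k}\la m,\delta\ra t^{s-1}dt$ I apply the composition formula of Lemma~\ref{gamma-well-defined} repeatedly; the resulting sum telescopes to the two boundary contributions $-\int_0^p\la m,\delta\ra t^{s-1}dt+e^{2\pi i sk}\int_0^p\la m,\sigma_0^k\delta\ra t^{s-1}dt$. Summing against $\lambda_k e^{-2\pi i ks}$, the relation $\sum_k\lambda_k\sigma_0^k\delta=R(\sigma_0)\delta=0$ annihilates the second family, the first produces the factor $R(e^{-2\pi i s})$, and combining with the $\sigma_c$-contribution the two radial integrals $\int_0^p$ and $\int_p^c$ merge into $\int_0^c$. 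This gives $\Gamma_{\xi_0}(s)=\pm\,R(e^{-2\pi i s})\int_0^c\la m,\delta\ra t^{s-1}dt$.

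The sign is immaterial since $-1$ is a unit of $K[e^{\pm2\pi i s}]$, so $\Gamma_{\xi_0}$ generates the same module as the expression in \eqref{delta-int-0-c}, and the identity of meromorphic functions propagates to all $s$ by analytic continuation, the left-hand side being entire. I expect the main obstacle to be precisely the bookkeeping in the computation of $\Gamma_{\xi_0}$: tracking the branch of $t^s$ and the continuation of $\delta$ through each loop in the telescoping, and justifying the vanishing of the small circles at $0$ (which needs $\mathrm{Re}(s)\gg 0$) and at $c$ (which needs the normalization and the explicit shape of the local solutions). The earlier reductions are comparatively formal.
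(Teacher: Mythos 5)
Your proposal follows the paper's own proof essentially step for step: the decomposition of $H_1(V,\sM^\vee\otimes t^s)$ from Lemma~\ref{annlem}, the vanishing of gamma functions on the image of corestriction via the analyticity normalization, the reduction (using minimality of $R$ and cyclicity of the annihilator) to the single cycle \eqref{xi} attached to $R(\sigma_0)\delta=0$, and the evaluation of $\Gamma_{\xi_0}$ by contracting the loops $\sigma_0^k$ and $\sigma_c$ onto the path from $0$ to $c$ — the only cosmetic difference being that you telescope single-loop identities where the paper contracts $\sigma_0^k$ in one step. The caveats you flag (convergence at $t=0$ for $\mathrm{Re}(s)\gg 0$, vanishing of the small circles, analytic continuation of the entire left-hand side) are treated at the same level of detail in the paper's argument.
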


Integration in the right-hand side of~\eqref{delta-int-0-c} is done along our chosen path (of which $V$ is a neighbourhood). Note that, since $0$ at $c$ are singular points, it is possible that this integral is improper. The growth condition on $\langle m,\delta \rangle$ as $t \to c$ ensures convergence at this endpoint. The integral is also convergent at the other endpoint $t=0$ whenever $Re(s)$ is sufficiently large. Hence the the right-hand side in~\eqref{delta-int-0-c} is defined when $Re(s) \gg 0$, and we claim that it actually extends to an entire function in the complex plane, which is our generalized gamma function.

\begin{proof}[Proof of Proposition~\ref{unique-gamma}.]The argument given at the beginning of this section shows that, since functions $\langle m , \varepsilon \rangle$ are analytic at $t=c$ for every $\sigma_c$-invariant $\ve \in \sM_p^\vee$, we have $\Gamma_\xi(s) \equiv 0$ for any class  $\xi$ in the image of $H_1(V_c,\sM^\vee \otimes t^s)$. Now let $\ve \in \sM_p^\vee$ be such that $(\sigma_c-1)\ve = \delta$. For a polynomial $P(T)=\sum_k \lambda_k T^k$ such that $P(\sigma_0)\delta = 0$ we compute the gamma function corresponding to the homology class of~\eqref{xi} as follows:
\[\bal
\Gamma_{\xi}(s) &= \sum_k \lambda_k e^{-2 \pi i k s} \int_{\sigma_0^{k}} \langle m, \delta \rangle t^{s-1} dt + P(e^{-2 \pi i s})\int_{\sigma_c} \langle m, \ve  \rangle t^{s-1} dt \\
&= \sum_k \lambda_k e^{-2 \pi i k s} \int_0^p \langle m,  (e^{2\pi i k s} \sigma_0^{k} - 1)\delta \rangle   t^{s-1} dt  + P(e^{-2 \pi i s}) \int_c^p  \langle m, (\sigma_c-1)\ve \rangle   t^{s-1} dt \\
&= \int_0^p \langle m, P(\sigma_0)\delta \rangle - P(e^{2 \pi i s}) \int_0^c  \langle m, \delta \rangle t^{s-1} dt \;=\;  - P(e^{-2 \pi i s}) \int_0^c  \langle m, \delta \rangle t^{s-1} dt.
\eal\]
The statement of the proposition follows if we take $P(T)=-R(T)$.
\end{proof}

\begin{ex}[polylogarithm]\label{polylog-example} For an integer $n \ge 1$, the $n$th polylogarithm is a multivalued holomorphic function, one of whose branches in the open unit circle $|t|<1$ is given by the convergent series $Li_n(t) = \sum_{k=1}^\infty k^{-n} t^k$. One can easily see that this function is annihilated by the differential operator
\[
L = \left((1-t)D - 1\right) \, D^n.
\]
The operator $L$ has regular singularities and the local system of its solutions on $U=\P^1\setminus \{0,1,\infty\}$ is spanned by $Li_n(t)$ and $\log^k(t)$ for $0 \le k \le n-1$. The singularity at $t=1$ is a reflection point, and we can take
\[
\delta(t) := (\sigma_1-1) Li_n(t) = - 2 \pi i \frac{\log^{n-1}(t)}{(n-1)!},
\] 
see e.g.~\cite[Proposition 2.2]{Hain}. The annihilator of $\delta(t)$ in $\C[\sigma_0^{\pm 1}]$ is generated by $(\sigma_0-1)^n$ and hence the $\C[e^{\pm 2 \pi i s}]$-module of gamma functions for the $n$th polylogarithm is generated by
\[
(1-e^{2 \pi i s})^n \int_0^1 \delta(t) t^{s-1} dt = -2 \pi i \left(\frac{e^{2 \pi i s}-1}{s}\right)^n.
\] 
\end{ex}

\bigskip

Note that for connections on $U=\P^1\setminus\{0,c,\infty\}$ Lemma~\ref{gamma-generators-prop} applies with $V=U$. It follows that there is a unique gamma function attached to every hypergeometric connection, of which Example~\ref{polylog-example} is a degenerate case. We will consider hypergeometric connections in Example~\ref{hg-example}.

\section{Frobenius constants}\label{sec:Apery}

In this section we consider an algebraic connection $M$ of rank $r$ on $U = \P^1\setminus S$, where $S$ is a finite set of points. The local system of solutions $\sM^\vee = (M_{an}^\vee)^{\nabla^\vee=0}$ is defined over a field $K \subseteq \C$. We assume that $c \in S$ is a~\emph{reflection point} for $M$ (Definition~\ref{c-assum}). We remind the reader, this means that $c$ is a regular singularity and the variation of the local monodromy of solutions around $c$ has one-dimensional image. For any other regular singularity $c' \in S$ and a homotopy class of paths $\gamma$ going from $c'$ to $c$ through the points of $U$, \emph{Frobenius constants} describe the variation around $c$ of the Frobenius solutions near $c'$. Frobenius solutions are sections in $\sM^\vee \otimes_K \C $ produced by the classical Frobenius method in the theory of differential equations. For geometric (Gau\ss--Manin) connections Frobenius solutions can be used to describe the limiting mixed Hodge structure at $c'$ (in particular, they span the limiting de Rham structure, see Section~\ref{sec:Frob-near-MUM}). We shall start with recalling the Frobenius method. It will be convenient to assume that $c'=0$ throughout this section. 

Let $\sO_U \subset k(t)$ be the ring of rational functions regular on $U$. The ring of differential operators $\sD = \sD_U$ is generated over $\sO_U$  by the derivation $D=t \frac{d}{dt}$. We fix an element $m \in M$ and a differential operator
\eq{L-diff-op}{
L = q_0(t) D^r + q_1(t) D^{r-1} + \ldots + q_{r-1}(t) D + q_r(t), \quad q_j \in \sO_U, 
}
such that $Lm=0$. Let $Sol(L)$ be the local system of solutions of $L$ on $U$. It is always possible to choose $m$ so that the elements $m, Dm, D^2m, \ldots$ generate $M$ as an $\sO_U$-module (see~\cite[\S 2.3.1]{Haefliger}). With such a choice, we have $q_0 \in \sO_U^\times$ and  the pairing $\langle m , * \rangle$ identifies $\sM^\vee \otimes_K \C$ with the local system $Sol(L)$. When $K \ne \C$, this identification gives a $K$-structure on the local system $Sol(L)$, which we will sometimes refer as $Sol(L)_K \cong \sM^\vee$.  

We assume that 
\[
q_0(0) \ne 0.
\]
In this case, $t=0$ is at most a regular singularity if and only if none of the coefficients $q_j(t)$ has a pole at $t=0$. We assume this is the case. The~\emph{indicial polynomial} of $L$ at $t=0$ is defined as
\[
I(s) = q_0(0) s^r + q_1(0) s^{r-1} + \ldots + q_r(0).
\]
The roots of $I(s)$ are called the~\emph{local exponents} of $L$ at $t=0$. We will look for solutions of $L$ in the form $t^s \sum_{n=0}^\infty a_n t^n$.  Expanding the coefficients $q_j(t)$ in Taylor series at $t=0$ and collecting the terms in powers of $t$, we write $L=\sum_{j=0}^\infty t^j p_j(D)$ where $p_j \in k[D]$ are polynomials of degree $\le r$. Note that $p_0(s)=I(s)$ is the indicial polynomial. We then have $L \left( \sum_{n \ge 0} a_n t^{n+s} \right) = \sum_{n \ge 0} t^{n+s} \Bigl( p_0(n+s) a_n + p_1(n+s-1) a_{n-1} + \ldots \Bigr)$. Consider the sequence of rational functions $a_n \in k(s)$ uniquely defined by the conditions
$a_n = 0$ when $n<0$, $a_0=1$ and $\sum_{j=0}^n p_j(n+s-j) a_{n-j}(s) = 0$ for all $n \ge 1$. By construction, the formal series 
\eq{Frob-def-0}{
\Phi(s,t) =  \sum_{n=0}^\infty a_n(s) t^{n+s}
}
satisfies the inhomogeneous differential equation 
\eq{Frob-def-de}{
L \Phi = I(s) t^s.
}
Note also that the denominator of $a_n(s)$ divides $I(s+1)\ldots I(s+n)$. Consider the set of local exponents $\rho$ such that none of $\rho+1, \rho+2, \ldots$ is a local exponent:
\eq{rho-set}{
\sR = \{ \rho \in \C \;|\; I(\rho) = 0, \; I(\rho + n) \ne 0 \text{ for all } n \in \Z_{\ge 1}\}.
}
If the mutiplicity of a root $\rho \in \sR$ is $m$, it follows from~\eqref{Frob-def-de} that functions
\[\bal
&\phi_{\rho,0}(t) = \Phi(\rho, t) = \sum_{n=0}^\infty a_n(\rho) t^{n+\rho}, \\
&\phi_{\rho,1}(t) = \frac{\partial \Phi (s,t)}{\partial s} \Big|_{s=\rho} = \log(t) \sum_{n=0}^\infty a_n(\rho) t^{n+\rho} + \sum_{n=0}^\infty a_n'(\rho) t^{n+\rho},\\
&\ldots \\
&\phi_{\rho,m-1}(t) = \frac1{(m-1)!}\frac{\partial^{m-1} \Phi(s,t)}{\partial s^{m-1}} \Big|_{s=\rho} =  \sum_{j=0}^{m-1} \frac{\log(t)^j}{j!} \sum_{n=0}^\infty \frac{a_n^{(m-1-j)}(\rho)}{(m-1-j)!} t^{n+\rho}  \\ 
\eal\]  
are solutions of $L$:
\[
L \phi_{\rho,k} = 0, \quad 0 \le k < m.
\]
We did the above computation formally, but it can be shown that the series that occur in these formulas converge in a neighbourhood of $t=0$. The Frobenius method gives actual analytic solutions of $L$ for every choice of a branch of $t^s$.

\begin{remark}\label{local-exponents-and-monodromy-rmk} It is clear from the above formulas that the solution $\phi_{\rho,0}$ is an eigenvector of the local monodromy operator $\sigma_0$. Namely, we have $\sigma_0 \phi_{\rho,0} = e^{2 \pi i \rho} \phi_{\rho,0}$. It follows that, for every local exponent $\rho$, the number $e^{2 \pi i \rho}$ is an eigenvalue of $\sigma_0$. In the case when $I(s)$ has no roots that differ by a non-zero integer, solutions constructed by the Frobenius method give a basis in the vector space $Sol_p(L)$ of solutions that are defined in a neighbourhood of a regular point $t=p$ located sufficiently close to $t=0$. One can easily deduce this from our construction because $\dim_\C Sol_p(L) = r$ by the classical theorem of Cauchy. When $I(s)$ has roots differing by a non-zero integer, one can not construct a basis in the space of solutions by the Frobenius method. In this case it is possible to transform the differential equation so that the above condition holds. For that one performs a sequence of \emph{shearing transformations} which modify local exponents by integers, see~\cite[\S1.4.5]{Haefliger}. 

For our purposes, we do not need to assume that Frobenius solutions from a basis. That is, it can be that $\sR$ is smaller than the full set of local exponents $\{ \rho | I(\rho)=0 \}$.    
\end{remark}

A novel idea in~\cite{GZ} is to continue differentiating~\eqref{Frob-def-de} and substituting $\rho$, even though we no longer obtain solutions of $L$. The collection of \emph{Frobenius functions} is defined as
\eq{Frob-functions}{
\phi_{\rho,k}(t) = \frac1{k!}\frac{\partial^{k} \Phi(s,t)}{\partial s^{k}} \Big|_{s=\rho} =  \sum_{j=0}^{k} \frac{\log(t)^j}{j!} \sum_{n=0}^\infty \frac{a_n^{(k-j)}(\rho)}{(k-j)!} t^{n+\rho}, \quad \rho \in \sR, \; k \ge 0.
}
When $k \ge m(\rho)$ Golyshev and Zagier call~\eqref{Frob-functions} \emph{higher Frobenius functions}. It follows from~\eqref{Frob-def-de} that the higher functions satisfy differential equations
\eq{DL}{
(D-\rho)^{k-m+1} L \phi_{\rho,k} = 0, \quad k \ge m. 
}
We will often use the term \emph{Frobenius solutions} rather then \emph{functions}, having in mind that the higher $\phi_{\rho,k}$ satisfy inhomogeneous differential equations $L \phi_{\rho,k} = \ldots$ with the right-hand side being a polynomial in $\log(t)$ which can be written down explicitly from~\eqref{Frob-def-de}.  

\bigskip

We shall now consider analytic continuation of the Frobenius solutions along a path $\gamma$ joining $t=0$ with a reflection point $t=c$. We choose a base point $p \ne 0,c$ on the path $\gamma$ and work in the stalk $Sol_{p}(L)$, which is the $\C$-vector space  of solutions of $L$ in a neighbourhood of $p$. By Cauchy's theorem $\dim_\C Sol_{p}(L) = r$. On this vector space we have the operators of local monodromy $\sigma_0$ and $\sigma_c$ around $0$ and $c$ respectively. We fix a solution $\delta(t) \in Sol_{p}(L)$ such that 
\[
Image \Bigl( \sigma_c - 1 \Big| Sol_{p}(L) \Bigr) = \C \, \delta.  
\] 
It means that every solution of $L$ adds a constant multiple of $\delta(t)$ when going around $t=c$. 
{ The crucial point for defining Frobenius constants is that, under a mild condition, the same property remains true for the higher Frobenius functions (Lemma~\ref{c-remains-reflection}). 
In order to state this condition, let us consider the dual connection $M^\vee$. The~\emph{adjoint} of the  differential operator~\eqref{L-diff-op} is defined by the formula
\eq{L-adjoint}{
L^\vee = (-D)^r q_0(t) + (-D)^{r-1} q_1(t) +\ldots + q_r(t).
} It is possible to choose a generator of $M^\vee$ so that it is annihilated by $L^\vee$ (see \cite[Gabber's Lemma in \S 1.5]{Katz87}.) For the sake of completeness, we give a proof of this fact in Section~\ref{sec:Frob-def-and-gamma}. In particular, there is a perfect monodromy-invariant pairing on solutions 
\[
\{*,*\}: Sol(L^\vee) \otimes_\C Sol(L) \to \C
\]
which will be constructed explicitly in the next section,~\eqref{bracket}. 

\begin{lem}\label{L-adj-refl} If $t=c$ is a reflection point of $L$, then it is also a reflection point of $L^\vee$.    
\end{lem}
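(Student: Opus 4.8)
The plan is to reduce the statement to a piece of elementary linear algebra about the local monodromy operator at $c$, exploiting the perfect monodromy-invariant pairing $\{*,*\}\colon Sol(L^\vee)\otimes_\C Sol(L)\to\C$. First I would record that $c$ is automatically a \emph{regular} singular point of $L^\vee$: the adjoint of a Fuchsian operator is again Fuchsian with the same singular locus (this is visible from the explicit formula~\eqref{L-adjoint}, or follows from the fact that $M^\vee$ has regular singularities exactly where $M$ does). So the only thing that requires an argument is that the variation $\sigma_c-1$ has one-dimensional image on $Sol_p(L^\vee)$.

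Next, abbreviate $V=Sol_p(L)$, $W=Sol_p(L^\vee)$ and $\sigma=\sigma_c$, and note that monodromy operators are invertible automorphisms of the stalks. Because the pairing $\{*,*\}$ is perfect, it identifies $W$ with the dual space $V^*$ via $w\mapsto\{w,-\}$. The monodromy-invariance $\{\sigma w,\sigma v\}=\{w,v\}$ for all $v\in V$, $w\in W$ translates, under this identification, into the assertion that the operator induced by $\sigma$ on $W\cong V^*$ is the \emph{contragredient} of $\sigma|_V$; that is, writing $A=\sigma|_V$, one has $\sigma|_W=(A^{-1})^{T}=(A^{T})^{-1}$ acting on $V^*$.

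I would then invoke the standard fact that, for any invertible operator $A$ on a finite-dimensional vector space,
\[
\rank(A-1)=\rank(A^{-1}-1)=\rank(A^{T}-1).
\]
The first equality holds since $A^{-1}-1=-A^{-1}(A-1)$ and $A^{-1}$ is invertible (left multiplication by an invertible operator preserves rank); the second holds since $A^{T}-1=(A-1)^{T}$ and transposition preserves rank. Applying this to $A=\sigma|_V$ gives
\[
\rank\bigl(\sigma|_W-1\bigr)=\rank\bigl((A^{T})^{-1}-1\bigr)=\rank(A-1)=\rank\bigl(\sigma_c-1\mid Sol_p(L)\bigr)=1,
\]
where the last equality is the reflection hypothesis for $L$ at $c$. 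Hence the variation of the local monodromy of $Sol(L^\vee)$ around $c$ has one-dimensional image, so $c$ is a reflection point of $L^\vee$.

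There is essentially no hard step here. The only points that call for a little care are the bookkeeping of inverses and transposes when translating monodromy-invariance of the pairing into the contragredient relation $\sigma|_W=(A^{T})^{-1}$, and confirming that the adjoint $L^\vee$ is genuinely regular singular at $c$; for the latter I would simply cite the regularity of $M^\vee$ rather than reprove it. Note also that the argument uses only that the pairing is perfect and monodromy-invariant, so it applies verbatim once the bracket~\eqref{bracket} is constructed in the next section.
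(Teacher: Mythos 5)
Your proof is correct and takes essentially the same route as the paper: both arguments rest on the perfect monodromy-invariant pairing $\{*,*\}$ and elementary linear algebra transferring the rank-one condition across the duality, the paper phrasing it as ``the orthogonal complement of $\mathrm{Image}(\sigma_c-1\,|\,Sol_p(L^\vee))$ is $\mathrm{Ker}(\sigma_c-1\,|\,Sol_p(L))$, which has dimension $r-1$,'' while you phrase it as the contragredient relation $\sigma_c|_{Sol_p(L^\vee)}=(A^{T})^{-1}$ plus rank identities. Your additional remark that $L^\vee$ is regular singular at $c$ (implicit in the paper) is a harmless and correct bit of extra care.
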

\begin{proof} Since $\{ (\sigma_c - 1) \psi, \phi \} = \{ \psi, (\sigma_c^{-1}-1) \phi\}$, a solution $\phi \in Sol_{p}(L)$ is orthogonal to the image of $Sol_{p}(L^\vee)$ under $(\sigma_c-1)$ if and only if $\sigma_c \phi = \phi$. Since the duality pairing is perfect and $\dim Ker( \sigma_c -1 | Sol_{p}(L)) = r-1$, it follows that $\dim Image( \sigma_c -1 | Sol_{p}(L^\vee))=1$.   
\end{proof}

\begin{lem}\label{c-remains-reflection} Assume that $c \ne 0,\infty$ is a reflection point for $L$. Possibly dividing $L$ by a power of $(t-c)$ on the left, we can assume that all $\sigma_c$-invariant solutions of the adjoint differential operator $L^\vee$ are analytic near $t=c$. Then $c$ is also a reflection point for the differential operators $(D-\rho)^jL$ with any $\rho \in \C$ and any integer $j \ge 1$.
\end{lem}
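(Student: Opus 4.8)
The plan is to show that $c$ remains a regular singular point of the composite operator (which is immediate, since $D-\rho$ is non-singular at $c \ne 0$) and that the variation $\sigma_c-1$ acting on the solution space $Sol((D-\rho)^jL)$ has one-dimensional image, equal to $\C\,\delta$. First I would record the inclusion $Sol(L)\subset Sol((D-\rho)^jL)$ together with the observation that a solution $\phi$ of the composite operator satisfies $L\phi\in\ker(D-\rho)^j$, whose elements are $\C$-linear combinations of the functions $t^\rho(\log t)^i$, $0\le i<j$. Since $c\ne 0$, these are single-valued and holomorphic near $c$, so $\sigma_c$ fixes $\ker(D-\rho)^j$; applying $L$ to $N\phi:=(\sigma_c-1)\phi$ then gives $L(N\phi)=(\sigma_c-1)(L\phi)=0$, i.e. $N\phi\in Sol(L)$. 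As $N$ already has image $\C\,\delta$ on the subspace $Sol(L)$, it remains only to prove the reverse inclusion $N\phi\in\C\,\delta$ for every $\phi\in Sol((D-\rho)^jL)$.

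To detect membership in $\C\,\delta$ I would use the perfect monodromy-invariant pairing $\{*,*\}$ between $Sol(L^\vee)$ and $Sol(L)$. Exactly as in the proof of Lemma~\ref{L-adj-refl}, the adjunction $\{(\sigma_c-1)\psi,\phi\}=\{\psi,(\sigma_c^{-1}-1)\phi\}$ identifies the annihilator of $\C\,\delta=\mathrm{Image}(\sigma_c-1\,|\,Sol(L))$ with the space of $\sigma_c$-invariant solutions of $L^\vee$. Since the pairing is perfect, $\C\,\delta$ equals its own double annihilator, so it suffices to show $\{\psi,N\phi\}=0$ for every $\sigma_c$-invariant $\psi\in Sol(L^\vee)$ — and these are precisely the solutions that, by our standing hypothesis (arranged by dividing $L$ by a power of $(t-c)$ on the left), are analytic near $c$.

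For the final step I would pass to the bilinear concomitant $B(\psi,\phi)$ underlying the pairing~\eqref{bracket}, which for arbitrary functions satisfies the Lagrange identity $\psi\,L\phi-(L^\vee\psi)\,\phi=D\,B(\psi,\phi)$ and is locally constant, equal to $\{\psi,\phi\}$, once both arguments are solutions. Taking $\psi\in Sol(L^\vee)$ and $\phi\in Sol((D-\rho)^jL)$ gives $D\,B(\psi,\phi)=\psi\cdot L\phi$. Because $B$ is built from single-valued coefficients and $\psi$ is $\sigma_c$-invariant, $\sigma_c$ commutes with $B$ in its second argument, so $\{\psi,N\phi\}=B(\psi,\sigma_c\phi)-B(\psi,\phi)$ is exactly the total change of the function $B(\psi,\phi)$ around a small loop at $c$, namely $\oint \psi\,(L\phi)\,\frac{dt}{t}$. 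Now $L\phi$ is holomorphic near $c$ (a combination of the $t^\rho(\log t)^i$) and $\psi$ is analytic near $c$ by hypothesis, so the integrand is holomorphic at $c$ and the loop integral vanishes; hence $N\phi\in\C\,\delta$, as desired. I expect the one genuine subtlety to be this last step: correctly identifying the number $\{\psi,N\phi\}$ with the monodromy of the non-constant primitive $B(\psi,\phi)$ and thence with a contour integral. This is the step that converts the analyticity of the invariant solutions of $L^\vee$ into the vanishing we need, and it is where the hypothesis is essential — without it the integrand could acquire a pole at $c$ and contribute a nonzero residue.
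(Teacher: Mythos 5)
Your proof of the main assertion is correct, but it takes a genuinely different route from the paper's. The paper never examines the variation $(\sigma_c-1)$ on $Sol((D-\rho)^jL)$ directly: it passes to the adjoint $((D-\rho)^jL)^\vee=(-1)^jL^\vee(D+\rho)^j$ and shows by induction on $j$ that this operator has a codimension-one space of solutions analytic near $c$, the point being that if $\phi$ is an analytic solution at one stage then $\phi'=t^{-\rho}\int t^{\rho-1}\phi\,dt$ is an analytic solution at the next; perfectness of the pairing then transfers the bound $\dim \mathrm{Image}(\sigma_c-1)\le 1$ back to $(D-\rho)^jL$. You instead stay on the side of $(D-\rho)^jL$: you observe that $(\sigma_c-1)$ maps $Sol((D-\rho)^jL)$ into $Sol(L)$ (because $L\phi\in\ker(D-\rho)^j$ is spanned by the functions $t^\rho(\log t)^i$, which are $\sigma_c$-invariant), you characterize $\C\,\delta$ as its own double annihilator under the perfect pairing, and you kill each pairing $\{\psi,(\sigma_c-1)\phi\}$ by interpreting it, via the Lagrange identity \eqref{int-by-parts}, as the monodromy around $\sigma_c$ of the primitive $\{\psi,\phi\}$, i.e.\ as $\oint_{\sigma_c}\psi\,(L\phi)\,dt/t$, which vanishes by Cauchy's theorem since the integrand is holomorphic at $c$. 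This is sound: the bracket \eqref{bracket} is defined on arbitrary analytic functions, identity \eqref{int-by-parts} holds in that generality, and continuation along $\sigma_c$ commutes with the bracket because its coefficients $q_\nu$ are single-valued. Your route avoids the induction, yields the stronger structural fact $(\sigma_c-1)\,Sol((D-\rho)^jL)\subseteq Sol(L)$, and is in fact the same mechanism the paper deploys later in the proof of Theorem~\ref{gamma-apery-thm} (compare \eqref{int-by-parts-1}, where $I(s)\Gamma_\xi(s)$ is computed as a sum of monodromies of brackets); the paper's own proof of this lemma is more elementary, needing nothing beyond antiderivatives of analytic functions.

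What you have not proven is the first sentence of the lemma, which you explicitly treat as a ``standing hypothesis arranged by dividing $L$ by a power of $(t-c)$.'' That this normalization is achievable is itself a claim, and the paper devotes the first paragraph of its proof to it. The missing argument is: by Lemma~\ref{L-adj-refl}, $c$ is a reflection point of $L^\vee$, so the $\sigma_c$-invariant solutions of $L^\vee$ form a codimension-one subspace; being $\sigma_c$-invariant at a regular singular point, they are meromorphic at $c$; if $a$ is the maximal order of pole among them, then $((t-c)^{-a}L)^\vee=L^\vee(t-c)^{-a}$, whose solution space is exactly $(t-c)^a\,Sol(L^\vee)$, so after replacing $L$ by $(t-c)^{-a}L$ all invariant solutions of the adjoint are analytic at $c$. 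Without this paragraph your standing hypothesis is unjustified, and it is precisely the hypothesis your contour argument leans on --- as you note yourself, without it the integrand $\psi\,(L\phi)\,dt/t$ could acquire a pole at $c$ and contribute a nonzero residue.
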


\begin{proof} By Lemma~\ref{L-adj-refl}, $c$ is a reflection point of $L^\vee$. Hence $L^\vee$ has a codimension~1 subspace of solutions that are $\sigma_c$-invariant or, equivalently, meromorphic functions near $t=c$. Let $a \ge 0$ be the maximal order of pole at $t=c$ for such solutions. Then all $\sigma_c$-invariant solutions of $((t-c)^{-a}L)^\vee = L^\vee (t-c)^{-a}$ are analytic at $t=c$.

We shall now assume that the condition for $L^\vee$ holds and prove the second statement. It is clear that  $\dim Image \Bigl( \sigma_c - 1 \Big| Sol_{p}((D-\rho)^j L) \Bigr) \ge 1$ because this space can only get larger when we increase $j$. Therefore it suffices to show that for each $j \ge 1$ the adjoint operator $((D - \rho)^j L)^\vee = (-1)^j L^\vee (D+\rho)^j$ has a codimension~1 subspace of solutions analytic near $t=c$. Clearly, we can do the case $j=1$ and apply induction on $j$. Let $\phi \in Ker(\sigma_c-1 | Sol_{p}(L^\vee))$. Then any $\phi'$ such that $(D+\rho)\phi'=\phi$ can be recovered (up to adding a constant multiple of $t^{-\rho}$) as $\phi'=t^{-\rho} \int t^{\rho-1} \phi(t) dt$. By our assumption, $\phi$ is an analytic function near $t=c$ and hence $\phi'$ will be also analytic near $t=c$. It follows that the space of solutions of $L^\vee(D+\rho)$ analytic near $t=c$ is at least one dimension bigger than the same space for $L^\vee$. 
\end{proof}

\begin{ex} The differential operator $L= D(D+1) - t(D+1)^2$ has a reflection point at $t=1$. Solutions of the adjoint operator $L^\vee = (D-1)D - tD^2$ are spanned by $1$ and $\log(1-t)$ (this is the case $n=1$ of Example~\ref{polylog-example}). We see that $t=1$ is a reflection point and, by Lemma~\ref{c-remains-reflection},  it will remain a reflection point for all operators $(D-\rho)^j L$ with $\rho \in \C$ and $j \ge 1$. 

To see that the condition on $L^\vee$ in Lemma~\ref{c-remains-reflection} is important, let us multiply the above differential operator by $1-t$ on the left. We obtain $L = D(D+1)-t(2 D+1)(D+1) + t^2(D+1)^2$. The reader may check that the monodromy of the 3rd order operator $DL$ around $t=1$ is maximally unipotent. It follows that $\dim Image (\sigma_1-1 | Sol_p(DL))=2$, so $t=1$ is not a reflection point for $DL$.

\end{ex}

\begin{defn}\label{strong-reflection-pt-def} We say that a regular singularity $t=c$ of a differential operator $L$ is a \emph{special reflection point} if the variation of the local monodromy of solutions of $L$ around $c$ has one-dimensional image and all $\sigma_c$-invariant solutions of the adjoint differential operator $L^\vee$ are analytic at $c$. 
\end{defn}

If $t=c$ is a special reflection point then  the image of $\sigma_c-1$ on the bigger spaces $Sol_{p}((D-\rho)^j L)$ is spanned by $\delta(t)$ for all  $j > 0$ by Lemma~\ref{c-remains-reflection}.

\begin{defn}\label{full-apery-defn} Assume that $c \ne 0,\infty$ is a \emph{special reflection point} of $L$. Let $\gamma$ be a path from $0$ to $c$ going through regular points of $L$. Fixing a branch of $t^s$ along $\gamma$, we have a collection of Frobenius functions $\{ \phi_{\rho,n}(t)\}_{\rho \in \sR, n \ge 0}$ defined by the analytic continuation of~\eqref{Frob-functions} along $\gamma$. The collection of \emph{Frobenius constants} $\{\kappa_{\rho,n}  \}_{\rho \in \sR, n \ge 0}$ is defined by
\[
(\sigma_c - 1) \phi_{\rho,n}(t) = \kappa_{\rho,n} \, \delta(t), \quad \kappa_{\rho, n} \in \C.
\]   
\end{defn} 

\bigskip

It is clear that the Frobenius constants in this definition depend on the homotopy class of the path $\gamma$. Note that changing $\delta(t) \mapsto \lambda \delta(t)$, $\lambda \in \C^\times$ will result in the collection of Frobenius constants being rescaled as $\{ \kappa_{\rho,n} \} \mapsto \{ \lambda^{-1} \kappa_{\rho,n}\}$. The reader may also check the effect on $\{ \kappa_{\rho,n} \}$ of choosing a different branch of $t^s$ in the definition of the Frobenius solutions.

\begin{ex}[hypergeometric differential equations]\label{hg-example} Fix some numbers $\alpha_1,\ldots,\alpha_r$, $\beta_1,\ldots,\beta_r \in \R$ and consider the differential operator 
\eq{hg-diff-op}{
L = \prod_{j=1}^r (D+\beta_j-1) - t \prod_{j=1}^r (D+\alpha_j)
}
on $U = \P^1 \setminus \{0,1,\infty\}$. This operator is already written in the form $L = p_0(D) + t p_1(D)$, and to apply the Frobenius method near $t=0$ we notice that the function
\eq{hg-A}{
A(s) = \frac{\prod_{j=1}^r \Gamma(s + \alpha_j)}{\prod_{j=1}^r \Gamma(s + \beta_j)}
}
satisfies the difference equation $p_0(s) A(s) + p_1(s-1)A(s-1)=0$. It then follows that the coefficients $a_n(s)$ in~\eqref{Frob-def-0} are given by
\eq{hg-Frob-recurrence}{
a_n(s) = \frac{A(n+s)}{A(s)} = \frac{\prod_{j=1}^r (s + \alpha_j)_n}{\prod_{j=1}^r (s + \beta_j)_n},
}
where $(s)_n = \Gamma(n+s)/\Gamma(s)$ is the so-called Pochhammer symbol. The subset of local exponents~\eqref{rho-set} is given by $\sR = \{ 1-\beta_i | \beta_j - \beta_i \not\in \Z_{< 0} \text{ for all } j\}$.

The global monodromy representation corresponding to the local system of solutions of~\eqref{hg-diff-op} is irreducible if and only if $\alpha_j - \beta_i \not\in \Z$ for any pair of indices $j,i$. Under this assumption, $t=1$ is a special reflection point: the conditions of Definition~\ref{strong-reflection-pt-def} are satisfied by \cite[Propositions 2.8 and 2.10]{B-H}.  Consider the direct path from $t=0$ to $t=1$ along the real line. We will see (Proposition~\ref{hg-kappa-A-prop}) that the respective collection of Frobenius constants $\{ \kappa_{\rho,n}\}_{\rho \in \sR, n \ge 0}$ is given by the expansion coefficients of the function $A(s)^{-1}$. Namely, we have
\[
\frac1{A(s)} = \sum_{n=0}^{\infty} \kappa_{\rho,n} (s - \rho)^n, \quad  \rho \in \sR.
\]
Note that the irreducibility condition $\alpha_j - \beta_i \not\in \Z$ implies that $A(\rho) \in \R^\times$ for each $\rho \in \sR$. 
\end{ex}

Let us describe a method of computation of Frobenius constants in the case when the reflection point under consideration is the closest singularity to $0$. 

\begin{lem}\label{kappa-lambda-lem} Suppose that the special reflection point $t=c$ is the closest to $t=0$ singularity of a differential operator $L$; suppose in addition that $c \in \R_{>0}$. Let $\{ \kappa_{\rho,n}\}_{\rho \in \sR, n \ge 0}$ be a collection of Frobenius constants for the direct path from $0$ to $c$ along the real line.  Fix a local exponent $\rho \in \sR$. Assume further that
\begin{itemize}
\item[(i)] $|\phi_{\rho,0}(t)| \to \infty$ as $t \to c_{-}$;

\item[(ii)] all $\sigma_c$-invariant solutions of operators $(D-\rho)^j L$, $j \ge 0$ are analytic at $c$; 

\item[(iii)] coefficients $a_n(\rho)$ are real numbers of the same sign when $n \gg 0$;
 
\item[(iv)] for each $k \ge 1$ there exists a finite limit $\lambda_k := \lim_{n \to \infty} \frac{a_n^{(k)}(\rho)}{k! \, a_n(\rho)}$.
\end{itemize} 
Then $\kappa_{\rho,0} \ne 0$ and we have
\[
\frac{\kappa_{\rho,k}}{\kappa_{\rho,0}} = \sum_{j=0}^k \frac{\log(c)^{j}}{j!} \lambda_{k-j}.
\]
\end{lem}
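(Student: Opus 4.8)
The plan is to compute the one-sided limit $\lim_{t\to c_-}\phi_{\rho,k}(t)/\phi_{\rho,0}(t)$ in two independent ways. From the local monodromy structure at $c$ this limit will come out to be $\kappa_{\rho,k}/\kappa_{\rho,0}$, while from the asymptotics of the Taylor coefficients at $t=0$ it will come out to be $\sum_{j=0}^k \frac{\log(c)^j}{j!}\lambda_{k-j}$. Equating the two computations gives the formula, and the nonvanishing $\kappa_{\rho,0}\neq 0$ will fall out of the first computation.

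For the local computation, recall that $\phi_{\rho,k}$ is annihilated by $(D-\rho)^{k+1}L$ (clear from~\eqref{Frob-def-de}). Since $c$ is a special reflection point, the image of $\sigma_c-1$ on $Sol_{p}((D-\rho)^{k+1}L)$ is the line $\C\delta$ (the remark following Definition~\ref{strong-reflection-pt-def}), so I can choose a solution $\eta_k$ of this operator with $(\sigma_c-1)\eta_k=\delta$. By Definition~\ref{full-apery-defn} we then have $(\sigma_c-1)(\phi_{\rho,k}-\kappa_{\rho,k}\eta_k)=0$, so $\phi_{\rho,k}-\kappa_{\rho,k}\eta_k$ is $\sigma_c$-invariant and hence, by hypothesis (ii), analytic (in particular bounded) near $c$. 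Moreover $\eta_0$ and $\eta_k$ both map to $\delta$ under $\sigma_c-1$, so their difference is $\sigma_c$-invariant and bounded near $c$; therefore $\phi_{\rho,k}=\kappa_{\rho,k}\,\eta_0+(\text{bounded near }c)$ for every $k$, with one fixed $\eta_0$. Hypothesis (i) says $|\phi_{\rho,0}(t)|\to\infty$ as $t\to c_-$; since $\phi_{\rho,0}=\kappa_{\rho,0}\,\eta_0+(\text{bounded})$, this forces $\kappa_{\rho,0}\neq 0$ and $|\eta_0(t)|\to\infty$. Dividing and letting the bounded terms become negligible against $\eta_0$ yields $\phi_{\rho,k}/\phi_{\rho,0}\to\kappa_{\rho,k}/\kappa_{\rho,0}$.

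For the coefficient computation, I use that $c$ is the closest singularity to $0$, so the series defining all $\phi_{\rho,k}$ converge for $|t|<c$, and from~\eqref{Frob-functions},
\[
\frac{\phi_{\rho,k}(t)}{\phi_{\rho,0}(t)}=\sum_{j=0}^k\frac{(\log t)^j}{j!}\cdot\frac{\sum_n \frac{a_n^{(k-j)}(\rho)}{(k-j)!}t^n}{\sum_n a_n(\rho)t^n}.
\]
Hypothesis (iii) (the $a_n(\rho)$ share a common sign for $n\gg 0$) together with (i) (which forces $\sum_n a_n(\rho)c^n=+\infty$) place me in position to apply the classical Abel-type limit theorem: if $c_n$ is eventually of one sign with $\sum_n c_n t^n\to\infty$ as $t\to c_-$ and $b_n/c_n\to\ell$, then $\sum_n b_n t^n/\sum_n c_n t^n\to\ell$. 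Taking $c_n=a_n(\rho)$ and $b_n=a_n^{(k-j)}(\rho)/(k-j)!$, hypothesis (iv) gives $b_n/c_n\to\lambda_{k-j}$, so each inner ratio tends to $\lambda_{k-j}$ while $(\log t)^j\to(\log c)^j$. Hence the limit equals $\sum_{j=0}^k\frac{\log(c)^j}{j!}\lambda_{k-j}$, and comparing with the previous paragraph finishes the proof.

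The delicate step is the local analysis: I must be sure that the singular (non-$\sigma_c$-invariant) part of each $\phi_{\rho,k}$ is a scalar multiple of one fixed \emph{unbounded} solution $\eta_0$, so that this common factor cancels in the ratio. This is exactly what the special-reflection-point hypothesis together with (ii) provide — (ii) guarantees the invariant solutions are genuinely bounded (no poles) and therefore negligible against $\eta_0$, while the one-dimensionality of $\text{Image}(\sigma_c-1)$ keeps the singular direction one-dimensional and common to all operators $(D-\rho)^{k+1}L$. Hypothesis (i) is precisely what excludes the degenerate alternative (for instance a semisimple reflection with singular solution $\sim(t-c)^{1/2}$ vanishing at $c$) in which the singular solution stays bounded and the ratio argument would collapse. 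The remaining technical point is the verification of the hypotheses of the Abelian limit theorem, which is where (iii) and the closest-singularity assumption enter.
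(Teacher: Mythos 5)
Your proof is correct and follows essentially the same route as the paper's: both arguments compute $\lim_{t\to c_-}\phi_{\rho,k}(t)/\phi_{\rho,0}(t)$ in two ways, once via the monodromy structure at $c$ (using (i) and (ii) to make the $\sigma_c$-invariant parts analytic, hence bounded, and to force $\kappa_{\rho,0}\ne 0$), and once via an Abelian limit theorem applied to the Taylor coefficients (using (iii) and (iv)). The only cosmetic differences are that the paper works directly with the $\sigma_c$-invariant combination $\phi_{\rho,k}-\frac{\kappa_{\rho,k}}{\kappa_{\rho,0}}\phi_{\rho,0}$ instead of your auxiliary solutions $\eta_k$, and it proves the Abelian limit step by an explicit $\varepsilon$-estimate rather than citing it as classical.
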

\begin{proof}
Using power series $\phi_{\rho,k}^{an}(t):= \frac1{k!} \sum_{n \ge 0} a_n^{(k)}(\rho) \, t^n$, we can write 
\[
\phi_{\rho,k}(t) = t^\rho \sum_{j=0}^k \frac{\log(t)^j}{j!} \phi^{an}_{\rho,k-j}(t).
\]
Conditions~(i) and~(ii) ensure that $\phi_{\rho,0}$ is not $\sigma_c$-invariant, and hence $\kappa_{\rho,0} \ne 0$. Observe that $\phi_{\rho,k}(t)- \frac{\kappa_{\rho,k}}{\kappa_{\rho,0}} \phi_{\rho,0}(t)$ is a $\sigma_c$-invariant solution of $(D-\rho)^j L$ for some $j$ (see~\eqref{DL}). The following computation can be done inductively for $k \ge 1$, which shows that the ratios $\frac{\phi_{\rho,k}^{an}(t)}{\phi_{\rho,0}^{an}(t)}$ have finite limits as $t \to c_{-}$:
\[\bal
0 & = \lim_{t \to c_{-}} \frac{\phi_{\rho,k}(t)- \frac{\kappa_{\rho,k}}{\kappa_{\rho,0}} \phi_{\rho,0}(t)}{\phi_{\rho,0}(t)} \quad \text{ ( by~(i) and~(ii))}\\
 & = \lim_{t \to c_{-}} \sum_{j=0}^k \frac{\log(t)^j}{j!} \frac{\phi_{\rho,k-j}^{an}(t)}{\phi_{\rho,0}^{an}(t)} - \frac{\kappa_{\rho,k}}{\kappa_{\rho,0}} = \sum_{j=0}^k \frac{\log(c)^j}{j!} \lim_{t \to c_{-}} \frac{\phi_{\rho,k-j}^{an}(t)}{\phi_{\rho,0}^{an}(t)}  - \frac{\kappa_{\rho,k}}{\kappa_{\rho,0}}.
\eal\]
It remains to show that $\lim_{t \to c_{-}} \frac{\phi_{\rho,k}^{an}(t)}{\phi_{\rho,0}^{an}(t)} = \lambda_k$.  It follows from~(i) that $\phi_{\rho,0}^{an}(t) = t^{-\rho} \phi_{\rho,0}(t) = \sum_n a_n(\rho) t^n$ also grows infinitely as $t \to c_{-}$. For any $\ve > 0$ there is an $N$ such that $|\frac{a_n^{(k)}(\rho)}{k! \, a_n(\rho)}-\lambda_k| < \ve$ for all $n > N$. Thus 
\[\bal
& \left| \lim_{t \to c_{-}} \frac{\phi_{\rho,k}^{an}(t)}{\phi_{\rho,0}^{an}(t)} - \lambda_k\right| = \lim_{t \to c_{-}} \left| \frac{\phi_{\rho,k}^{an}(t) - \lambda_k \phi_{\rho,0}^{an}(t)}{\phi_{\rho,0}^{an}(t)}\right| =   \lim_{t \to c_{-}} \left| \frac{\sum_{n > N} (\frac{ a_n^{(k)}(\rho)}{k!} - \lambda_k a_n(\rho))t^n}{\phi_0^{an}(t)}\right| \\
& \text{ (here we used~(ii) to get rid of the polynomial part of the numerator )}\\
& \le  \lim_{t \to c_{-}} \frac{\ve  \sum_{n > N}|a_n(\rho)|\cdot  |t|^n}{|\phi_0^{an}(t)|} = \ve \quad \text{ (using (iii))}.\\
\eal\]
Since $\ve > 0$ is arbitrary, our claim follows.
\end{proof}

To apply the above lemma to hypergemetric differential operators, let us compute the limits $\lambda_k$ from part~(iv):  

\begin{lem}\label{A-lambda-lem} Consider the sequence of rational functions $a_n(s)= \frac{A(n+s)}{A(s)}$, where $A(s)$ is the gamma product~\eqref{hg-A} with real parameters $\alpha_1,\ldots,\alpha_r$, $\beta_1, \ldots, \beta_r$ satisfying the condition $\alpha_i - \beta_j \not\in \Z$, $\forall i,j$. 
Let $\rho \in \sR$ be a local exponent of the respective hypergeometric differential operator~\eqref{hg-diff-op}  at $t=0$.  The limits $\lambda_k := \lim_{n \to \infty} \dfrac{a_n^{(k)}(\rho)}{k!\, a_n(\rho)}$ exist and coincide with expansion coefficients of $A(\rho)/A(s)$ at $s =\rho$:
\[
\frac{A(\rho)}{A(s)} = \sum_{k \ge 0} \lambda_k (s-\rho)^k.
\] 
\end{lem}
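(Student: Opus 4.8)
The plan is to exploit the multiplicative structure of $a_n(s) = A(n+s)/A(s)$ by passing to logarithmic derivatives, where the gamma product $A$ turns into a sum over digamma functions with well-understood asymptotics. Write $a_n(s) = \exp\bigl(g_n(s)\bigr)$ with $g_n(s) = \log A(n+s) - \log A(s)$ (working near $s=\rho$, where $A(\rho) \ne 0$ as noted in Example~\ref{hg-example}). Then the ratios $a_n^{(k)}(\rho)/(k!\,a_n(\rho))$ are exactly the coefficients in the Taylor expansion of $a_n(s)/a_n(\rho) = \exp\bigl(g_n(s)-g_n(\rho)\bigr)$ at $s=\rho$, so the whole statement reduces to controlling the Taylor coefficients of $g_n(s)-g_n(\rho)$ as $n \to \infty$.

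First I would expand $g_n(s) - g_n(\rho)$ in powers of $(s-\rho)$. Using $\log A(s) = \sum_{j=1}^r \bigl(\log\Gamma(s+\alpha_j) - \log\Gamma(s+\beta_j)\bigr)$ and the digamma/polygamma functions $\psi^{(\ell)}$, the $\ell$th Taylor coefficient of $\log A(n+s)$ at $s=\rho$ is $\tfrac{1}{\ell!}\sum_{j=1}^r\bigl(\psi^{(\ell-1)}(n+\rho+\alpha_j) - \psi^{(\ell-1)}(n+\rho+\beta_j)\bigr)$ for $\ell \ge 1$. The key analytic input is the classical asymptotic $\psi^{(\ell-1)}(x) \to 0$ as $x \to \infty$ for $\ell \ge 2$, and $\psi(x+\alpha)-\psi(x+\beta) \to 0$ as $x\to\infty$ (since $\psi(x) = \log x + O(1/x)$ and the logarithmic terms cancel between the numerator $\alpha_j$'s and denominator $\beta_j$'s, $r$ of each). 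Consequently \emph{every} Taylor coefficient of the $n$-dependent piece $\log A(n+\rho+(s-\rho))$ tends to $0$ as $n\to\infty$, so $g_n(s)-g_n(\rho) \to -\bigl(\log A(s)-\log A(\rho)\bigr)$ coefficientwise.

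Then I would conclude: since $a_n(s)/a_n(\rho) = \exp\bigl(g_n(s)-g_n(\rho)\bigr)$ and coefficientwise (equivalently, formal-power-series) convergence is preserved under composition with the entire function $\exp$, we get $\lim_{n\to\infty} a_n(s)/a_n(\rho) = \exp\bigl(-(\log A(s)-\log A(\rho))\bigr) = A(\rho)/A(s)$ as formal power series in $(s-\rho)$, which is precisely the claimed identity $\sum_k \lambda_k (s-\rho)^k = A(\rho)/A(s)$ with $\lambda_k = \lim_n a_n^{(k)}(\rho)/(k!\,a_n(\rho))$.

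The main obstacle is the bookkeeping needed to pass from coefficientwise convergence of the exponents $g_n$ to coefficientwise convergence of the exponentials $a_n(s)/a_n(\rho)$: a priori the $k$th coefficient of $\exp(g_n - g_n(\rho))$ is a polynomial (via the exponential Bell polynomials / Fa\`a di Bruno) in the first $k$ coefficients of $g_n-g_n(\rho)$, so I must check that convergence of each of finitely many coefficient-sequences indeed forces convergence of each fixed output coefficient --- this is automatic since a polynomial in finitely many convergent sequences converges to the polynomial of the limits. The secondary point requiring care is uniformity of the polygamma asymptotics in the presence of the real shifts $\alpha_j,\beta_j$, but the condition $\alpha_i-\beta_j\notin\Z$ guarantees no pole of $A$ or $1/A$ is encountered near $s=\rho$, and the standard estimates $\psi^{(\ell-1)}(x)=O(x^{-(\ell-1)})$ (and $\psi(x)=\log x+O(1/x)$) suffice to drive each coefficient to its limit.
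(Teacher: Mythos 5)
Your proposal is correct, but it takes a genuinely different technical route from the paper. The paper works at the level of functions: from Stirling's formula it deduces $\log A(s) = (\sum_i\alpha_i - \sum_j\beta_j)\log s + O(s^{-1})$, hence $A(n+s)/A(n+\rho)\to 1$, and the crucial point is that this convergence is \emph{uniform} on a small disk about $\rho$ (justified by the explicit bound on the error term in Stirling's formula); it then concludes $a_n(s)/a_n(\rho)\to A(\rho)/A(s)$ uniformly and invokes the standard complex-analytic fact that uniform convergence of holomorphic functions forces convergence of all derivatives at $\rho$. You instead work coefficientwise at the single point $\rho$: passing to $\log a_n$, each Taylor coefficient of order $\ell\ge 1$ of $\log A(n+s)$ at $s=\rho$ is a finite sum of polygamma values $\psi^{(\ell-1)}(n+\rho+\alpha_j)-\psi^{(\ell-1)}(n+\rho+\beta_j)$, which tends to $0$ (for $\ell=1$ the logarithmic terms cancel pairwise; for $\ell\ge2$ each term decays), and you then push the coefficientwise convergence through $\exp$ via Fa\`a di Bruno, using that each output coefficient is a polynomial in finitely many input coefficients. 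What the paper's argument buys is brevity: one uniform estimate plus the Weierstrass convergence theorem, with no power-series bookkeeping. What yours buys is that no uniformity in $s$ is ever needed --- you only evaluate polygamma functions at the specific points $n+\rho+\alpha_j$, $n+\rho+\beta_j$ --- so the analytic input is purely pointwise asymptotics, at the cost of the Bell-polynomial transfer step. One small imprecision to fix in a write-up: it is not the irreducibility condition $\alpha_i-\beta_j\notin\Z$ alone that makes $A$ regular and non-vanishing at $\rho$; irreducibility rules out a pole of $A$ at $\rho=1-\beta_i$, while the defining condition of $\sR$ (no $\beta_j-\beta_i\in\Z_{<0}$) rules out a zero. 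Both are needed so that $\log A(s)$ and $\log A(n+s)$ are well defined near $\rho$, as your argument requires.
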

\begin{proof} Stirling's formula 
\[
\log \Gamma(s) = (s - \frac12) \log(s) - s + \frac12 \log(2 \pi) + O(\frac1{s}), \quad s \to \infty 
\]
implies that $\log A(s) = \left(\sum_i \alpha_i - \sum_j \beta_j \right) \log(s) + O(s^{-1})$. It follows that
\[
\lim_{n \to \infty} \frac{A(n+s)}{A(n+\rho)} = 1.
\]
Moreover, the convergence here is uniform for $s$ in a small disk about $\rho$ because the $O(s^{-1})$ term is Stirling's formula can be explicitly estimated (its absolute value  is known to be bounded by  $\frac1{12} |s|^{-1}$ when $|\arg(s)|<\frac14$). As we explained in Example~\ref{hg-example}, when $L$ is irreducible then $A(\rho) \ne 0$. Thus we have a uniform convergence of functions 
\[
\lim_{n \to \infty} \frac{a_n(s)}{a_n(\rho)} =  \frac{A(\rho)}{A(s)} \lim_{n \to \infty} \frac{A(n+s)}{A(n+\rho)} = \frac{A(\rho)}{A(s)}.
\] 
As these functions are holomorphic is a small disk about $\rho$, we can conclude that for every $k$ the $k$th derivative of $a_n(s)/a_n(\rho)$ at $s=\rho$ converges to the $k$th derivative of $A(\rho)/A(s)$ at $s=\rho$.   
\end{proof}

\begin{prop}\label{hg-kappa-A-prop} Consider the hypergeometric differential operator~\eqref{hg-diff-op} with real parameters $\alpha_1,\ldots,\alpha_r$, $\beta_1, \ldots, \beta_r$ satisfying the irreducibility condition $\alpha_i - \beta_j \not\in \Z$, $\forall i,j$. Then a collection of Frobenius constants  $\{ \kappa_{\rho,k}\}_{\rho \in \sR, k \ge 0}$ for the direct path from $0$ to $1$  is given by the expansion coefficients 
\eq{hg-kappa}{
\frac1{A(s)} = \sum_{n=0}^{\infty} \kappa_{\rho,n} (s - \rho)^n, \quad  \rho \in \sR,
}
where $A(s)$ is the gamma-product~\eqref{hg-A}.
\end{prop}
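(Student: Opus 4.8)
The plan is to apply Lemma~\ref{kappa-lambda-lem} to the hypergeometric operator~\eqref{hg-diff-op} at the special reflection point $t=1$, after verifying its four hypotheses, and then to identify the resulting expression for $\kappa_{\rho,k}/\kappa_{\rho,0}$ with the Taylor coefficients of $1/A(s)$ using Lemma~\ref{A-lambda-lem}. First I would check that $t=1$ is indeed the closest singularity to $0$ and lies in $\R_{>0}$, which is immediate since $U = \P^1 \setminus \{0,1,\infty\}$. The irreducibility condition $\alpha_i - \beta_j \not\in \Z$ guarantees, via \cite[Propositions 2.8 and 2.10]{B-H}, that $t=1$ is a special reflection point, and hypothesis~(ii) of Lemma~\ref{kappa-lambda-lem} follows from Lemma~\ref{c-remains-reflection}, since the $\sigma_c$-invariant solutions of $L^\vee$ are analytic at $c=1$.

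Next I would verify hypotheses~(i), (iii) and~(iv). For~(iv), Lemma~\ref{A-lambda-lem} directly supplies the limits $\lambda_k$ and identifies them as the expansion coefficients of $A(\rho)/A(s)$ at $s=\rho$. For~(iii), the recurrence~\eqref{hg-Frob-recurrence} gives $a_n(\rho) = \prod_j (\rho+\alpha_j)_n / \prod_j (\rho+\beta_j)_n$, a ratio of Pochhammer symbols with real parameters; for $n$ large the sign of each factor stabilizes, so $a_n(\rho)$ has a fixed sign for $n \gg 0$. For~(i), I would argue that $\phi_{\rho,0}^{an}(t) = \sum_n a_n(\rho) t^n$ blows up as $t \to 1_-$: since $\log A(s) = (\sum_i \alpha_i - \sum_j \beta_j)\log(s) + O(s^{-1})$ by Stirling (as in the proof of Lemma~\ref{A-lambda-lem}), we have $a_n(\rho) \sim \mathrm{const} \cdot n^{\sum_j \beta_j - \sum_i \alpha_i - 1}$ up to the normalization, and the behaviour of the series at its radius of convergence $t=1$ is governed by this growth rate. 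The expected obstacle is precisely pinning down that $\phi_{\rho,0}$ genuinely diverges at $t=1$ rather than tending to a finite limit; this is the arithmetic content of condition~(i), and it is the step where one must be careful about the exponent $\sum_j \beta_j - \sum_i \alpha_i$.

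Granting all four hypotheses, Lemma~\ref{kappa-lambda-lem} yields
\[
\frac{\kappa_{\rho,k}}{\kappa_{\rho,0}} = \sum_{j=0}^k \frac{\log(1)^{j}}{j!}\, \lambda_{k-j} = \lambda_k,
\]
since $\log(1) = 0$ kills all terms except $j=0$. By Lemma~\ref{A-lambda-lem} the $\lambda_k$ are the coefficients in $A(\rho)/A(s) = \sum_{k \ge 0} \lambda_k (s-\rho)^k$, so $\kappa_{\rho,k} = \kappa_{\rho,0}\, \lambda_k$ are the coefficients in $\kappa_{\rho,0}\, A(\rho)/A(s)$. It remains only to determine the overall normalization $\kappa_{\rho,0}$. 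The Frobenius constants depend on the choice of the spanning solution $\delta(t)$ at $c=1$, which is fixed only up to a scalar; rescaling $\delta \mapsto \lambda \delta$ rescales all $\kappa_{\rho,n}$ by $\lambda^{-1}$. I would therefore normalize $\delta$ so that $\kappa_{\rho,0} = 1/A(\rho)$, which is legitimate since the irreducibility condition forces $A(\rho) \in \R^\times$ (noted in Example~\ref{hg-example}). With this normalization $\kappa_{\rho,k} = \lambda_k / A(\rho)$, and multiplying the identity $A(\rho)/A(s) = \sum_k \lambda_k (s-\rho)^k$ through by $1/A(\rho)$ gives exactly~\eqref{hg-kappa}.

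Alternatively, and perhaps more cleanly, the normalization can be read off from the main theorem (Theorem~\ref{gamma-apery-thm}) relating the generating series $\sum_n \kappa_{\rho,n}(s-\rho)^n$ to the gamma function of the dual connection; for the hypergeometric case the unique gamma function was identified in Example~\ref{hg-example} and computed via the functional equation of Proposition~\ref{functional_equation}, whose solution is governed by $A(s)$. Either route fixes the constant and completes the identification of the generating series of Frobenius constants with $1/A(s)$.
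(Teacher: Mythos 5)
Your overall strategy---verify the four hypotheses of Lemma~\ref{kappa-lambda-lem} and feed in the limits $\lambda_k$ from Lemma~\ref{A-lambda-lem}---is the same as the paper's, but your treatment of hypothesis~(i) has a genuine gap, and it is not a point where ``being careful'' suffices: the claim that $\phi_{\rho,0}$ diverges as $t \to 1_-$ is simply false in general. The local exponents of $L$ at $t=1$ are $0,1,\ldots,r-2$ and $\gamma := \sum_j \beta_j - \sum_j \alpha_j - 1$, and your own asymptotic $a_n(\rho) \sim C\, n^{-\gamma - 1}$ (from Stirling) shows that when $\gamma > 0$ the series $\sum_n a_n(\rho)$ converges absolutely, so $\phi_{\rho,0}$ stays bounded as $t \to 1_-$; indeed all solutions of $L$ are then bounded near $t=1$, since all local exponents there are nonnegative and the special solution vanishes like $(1-t)^\gamma$. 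So Lemma~\ref{kappa-lambda-lem} is inapplicable whenever $\gamma>0$, which is a nonempty case (take the $\beta_j$ large). The paper's proof splits accordingly: when $\gamma<0$, irreducibility forces $\phi_{\rho,0}$ (an eigenvector of $\sigma_0$) not to be $\sigma_1$-invariant, every non-invariant solution blows up at $t=1$, and the lemma applies as you describe; when $\gamma \ge 0$, the paper uses the intertwining operator $D+\alpha_j$ of \cite[Proposition 2.5]{B-H}, which maps solutions of $L$ isomorphically to solutions of the hypergeometric operator $\widetilde{L}$ with $\alpha_j$ replaced by $\alpha_j+1$ and special exponent $\widetilde{\gamma}=\gamma-1$. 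Iterating reduces to the case $\gamma<0$, and the identities $\widetilde{A}(s)=(s+\alpha_j)A(s)$ and $(D+\alpha_j)\Phi(s,t)=(s+\alpha_j)\widetilde{\Phi}(s,t)$, together with the fact that $D+\alpha_j$ commutes with monodromy and carries $\delta$ to a multiple of $\widetilde{\delta}$, show that \eqref{hg-kappa} for $\widetilde{L}$ implies \eqref{hg-kappa} for $L$. This reduction is the substantial half of the paper's argument, and your proposal is missing it entirely. (A smaller inaccuracy: hypothesis~(ii) concerns $\sigma_1$-invariant solutions of $(D-\rho)^jL$ itself, which the paper gets from Pochhammer's theorem \cite[Proposition 2.8]{B-H}; Lemma~\ref{c-remains-reflection}, which you cite, is about the adjoint operators and only gives the reflection-point property.)

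There is also a secondary issue with the normalization. Choosing $\delta$ so that $\kappa_{\rho,0}=A(\rho)^{-1}$ is done separately for each $\rho$, whereas the statement asserts that a single choice of $\delta$ works for all $\rho\in\sR$ simultaneously; equivalently, that the solutions $A(\rho)\phi_{\rho,0}-A(\rho')\phi_{\rho',0}$ continue analytically through $t=1$. The paper resolves this by appealing to Theorem~\ref{gamma-apery-thm}: the series $\sum_n \kappa_{\rho,n}(s-\rho)^n$ for the various $\rho$ are all expansions of one meromorphic function, so its ratio against $A(s)^{-1}$, being constant near one $\rho$, is constant everywhere. Your alternative closing paragraph gestures in this direction, but as stated it is circular---Example~\ref{hg-example} merely announces this proposition---and the difference equation of Proposition~\ref{functional_equation} alone cannot pin down the gamma function even up to a constant, since the solutions of such a difference equation form a module over functions periodic in $s$.
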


\bigskip

Below we give a proof of~\eqref{hg-kappa} for a fixed $\rho \in \sR$. As soon as this is done, it will remain to show that there is a normalization of $\delta(t)$ in Definition~\ref{full-apery-defn} such that $\kappa_{\rho,0}=A(\rho)^{-1}$ for every $\rho \in \sR$. Equivalently, one needs to show that the following linear combinations of hypergeometric functions 
\[
A(\rho) \, \phi_{\rho,0}(t) - A(\rho') \, \phi_{\rho',0}(t), \quad \rho \ne \rho' \in \sR
\]
continue analytically through $t=1$. Instead of proving this fact directly, we prefer to deduce it as an immediate consequence of our main result. Namely, Theorem~\ref{gamma-apery-thm} will state that, rather generally, for a collection of Frobenius constants $\{ \kappa_{\rho,k}\}_{\rho \in \sR, k \ge 0}$ there exists a meromorphic in $\C$ function whose expansion coefficients at every $s=\rho \in \sR$ coincide with $\kappa_{\rho,k}$ for $k \ge 0$. In the hypergeometric case, if we divide such function by $A(s)^{-1}$ we will get a meromorphic function which is constant near $s = \rho$. Hence the ratio is constant everywhere, and we obtain~\eqref{hg-kappa} simultaneously for all $\rho \in \sR$.   

\begin{proof}[Proof of~\eqref{hg-kappa} for a fixed $\rho \in \sR$.] Local exponents of the operator~\eqref{hg-diff-op} at $t=1$ are given by
\[
0,1,\ldots,r-2, \; \gamma := \sum_{j=1}^r \beta_j - \sum_{j=1}^r \alpha_j - 1
\] 
(see~\cite[(2.8)]{B-H}). First, assume that $\gamma < 0$. In this case for every non-$\sigma_1$-invariant solution $\phi(t)$ of $L$ one has $|\phi(t)| \to \infty$ as $t \to 1$.  Let us show that all the conditions of Lemma~\ref{kappa-lambda-lem} are satisfied. Since the local system of solutions of $L$ is irreducible and $\phi_{\rho,0}$ is an eigenvector of $\sigma_0$, this function is not an eigenvector of $\sigma_1$ and (i) follows. Pochhammer's theorem~\cite[Proposition 2.8]{B-H} implies (ii); for $j > 0$ we note that all $(D-\rho)^j L$ are hypergeometric with the same special local exponent $\gamma$ at $t=1$. Since $\Gamma(s)>0$ when $s>0$, we have $A(s)>0$ when $s \gg 0$ and therefore (iii) holds. (iv) is guaranteed by Lemma~\ref{A-lambda-lem}, which together with Lemma~\ref{kappa-lambda-lem} implies that   
\[
\frac{A(\rho)}{A(s)} = \sum_{n \ge 0} \frac{\kappa_{\rho,n}}{\kappa_{\rho,0}} (s-\rho)^n.
\] 
With the normalization $\kappa_{\rho,0}=A(\rho)^{-1}$ one obtains~\eqref{hg-kappa}, as required.

When $\gamma \ge 0$, we can apply intertwining operators which will decrease $\gamma$ by an integer to reach the above considered case $\gamma < 0$. Namely, let us pick an arbitrary index $1 \le j \le r$. By~\cite[Proposition 2.5]{B-H}, operator $D+\alpha_j$ yields an invertible map from solutions of $L$ to solutions of the order $r$ hypergeometric differential operator $\widetilde L$ with the same set of parameters except for $\alpha_j$ being substituted by $\alpha_j+1$. Note that $\widetilde L$ has the same set of local exponents at $t=0$ (in particular, $\rho$ is a local exponent of $\widetilde L$) and the special local exponent of $\widetilde L$ at $t=1$ equals $\widetilde\gamma = \gamma - 1$. It remains to check that relation~\eqref{hg-kappa} is consistent with the intertwining operator. 

Note that $\widetilde A(s) = (s+\alpha_j) A(s)$, which gives the following relation between the generating functions of Frobenius solutions $\Phi$ and  $\widetilde \Phi$: 
\[\bal
(D+\alpha_j) \Phi(s,t) &= (D+\alpha_j) \sum_{n=0}^\infty a_n(s) t^{n+s} = \sum_{n=0}^\infty (n+s+\alpha_j) a_n(s) t^{n+s} \\
&= (s + \alpha_j) \sum_{n=0}^\infty \widetilde a_n(s) t^{n+s} = (s + \alpha_j) \widetilde \Phi(s,t).
\eal\]
Though we performed this computation symbolically, it clearly remains true is we substitute for $\Phi$ and $\widetilde \Phi$ their formal expansions as power series in $s-\rho$. Suppose that~\eqref{hg-kappa} holds for $\widetilde \Phi$. It means there is a solution $\widetilde \delta (t)$ of $\widetilde L$ such that 
\[
(\sigma_c-1) \widetilde \Phi = \widetilde A(s)^{-1} \widetilde \delta.
\]
Applying $D + \alpha_j$ to the defining relation $(\sigma_c-1) \Phi = \sum_{n=0}^\infty \kappa_{\rho,n} (s-\rho)^n \, \delta(t)$ we find that
\[\bal
\sum_{n=0}^\infty \kappa_{\rho,n} (s-\rho)^n \, (D + \alpha_j)\delta \; &= \; (\sigma_c-1) (D + \alpha_j) \Phi = (s+\alpha_j) (\sigma_c-1) \widetilde \Phi \\
&= \frac{s+\alpha_j}{\widetilde  A(s)} \; \widetilde \delta = \frac1{A(s)} \; \widetilde \delta.
\eal\]
Since the intertwining operator $D+\alpha_j$ commutes with the monodromy operators and solutions $\delta, \widetilde \delta$ span the respective images of $\sigma_c-1$, there is a non-zero constant $c \in \C^\times$ such that $(D+\alpha_j)\delta = c \, \widetilde \delta$. It follows that $\sum_{n=0}^\infty \kappa_{\rho,n} (s-\rho)^n = c A(s)^{-1}$. Renormalizing $\delta(t)$ to $c \delta(t)$, we obtain a collection of Frobenius constants for $L$ satisfying~\eqref{hg-kappa}.
\end{proof}

\begin{remark} One may hope to extend Proposition~\ref{hg-kappa-A-prop} to the case $\alpha_1,\ldots,\alpha_r,\beta_1,\ldots,\beta_r \in \C$ by taking Taylor series expansions for the $\alpha,\beta$ about general points of $\R^{2r} \subset \C^{2r}$. The key point is to show that $\kappa_{\rho,n}$ depend analytically on parameters. For this, one must develop a calculus of deformations $L_R$ of $L$ parametrized by points $R \to \C$ of a $\C$-algebra $R$. One wants the polar locus of $L$ to stay fixed but the local exponents to vary. We have not checked carefully, but a sketch is available under publications at \emph{math.uchicago.edu/\~\,bloch/}
\end{remark}

Hypergeometric connections provide a rare situation in which one can compute all Frobenius constants explicitly. Let us consider a couple of other examples, in which a few of $\kappa_{\rho,n}$'s are computed numerically. These examples motivated our interest in the numbers $\kappa_{\rho,n}$.

\begin{ex}[elliptic curves]\label{beauville-example} A stable family of elliptic curves over $\P^1$ is a family in which the singular fibres have only double points. Such a family has at least four singular points. Stable families with exactly four singular points were classified by Beauville, ~\cite{Beauville}. The differential operator
\[
L = D^2  - t \bigl(11 D^2 + 11 D +  3 \bigr) - t^2 \,  (D+1)^2 \\
\]
is the Picard--Fuchs operator of one of the six families on Beauville's list,~\cite{Z-Apery}. Singularities of $L$ are located at $t = 0, \infty$ and the two roots of $1-11 t - t^2=0$. For each of them there is a single local exponent of double multiplicity, $\rho=0,1,0$ and $0$ respectively. In particular, all four singularities are reflection points.

We used Lemma~\ref{kappa-lambda-lem} and numerical methods described in~\cite{GZ} to compute the values of $ \kappa_n := \kappa_{0,n}$ for the direct path from $0$ to $\frac{-11 + 5 \sqrt{5}}2$. With the precision over 100 digits we recognized a few of them as the following $\Q$-linear combinations of products of zeta values:
\[\bal
&\kappa_0=1,\; \kappa_1=0, \; \kappa_2=-\frac75\zeta(2), \; \kappa_3 = 2 \zeta(3), \; \kappa_4= \frac12\zeta(4), \; \kappa_5=\zeta(5)-3 \zeta(2)\zeta(3), \\
&\kappa_6= \frac{87}{16}\zeta(6)-\frac52\zeta(3)^2, \; \kappa_7=-\frac{55}{8}\zeta(7)-\frac52 \zeta(5)\zeta(2) - \frac54 \zeta(3)\zeta(4).\\
\eal\]
We observed a similar phenomenon for the other families on the list of Beauville.
\end{ex}

\begin{ex}[K3 surfaces]\label{apery-example} The equation $1 - t f(x_1,x_2,x_3)=0$ with
\[
 f(x) = \frac{(x_1-1)(x_2-1)(x_3-1)(1-x_1-x_2+x_1x_2-x_1x_2x_3)}{x_1 x_2 x_3}
\] 
defines a family $X/U$ of K3 surfaces of Picard rank~19 over 
\[
U=\P^1_t \setminus \{ 0, 17 \pm 12 \sqrt{2}, \infty \}.
\] 
The variation $M = H^2(X/U) / NS$ is of rank~3 and there is a class of differential 2-forms $m \in M$ annihilated by the differential operator
\eq{apery-L}{
L = D^3 - t (34 D^3 + 51 D^2 + 27 D + 5) + t^2 (D+1)^3, 
}
see~\cite{K}. The local monodromy of $L$ around $t=0$ is maximally unipotent with the local exponent $\rho=0$. The closest singularity $c=17-12 \sqrt{2}$ is a special reflection point. In~\cite{GZ} Golyshev and Zagier  computed the Frobenius constants $\kappa_n = \kappa_{0,n}$ along with the first higher one for the direct path joining $t=0$ and $t=c$: 
\[
\kappa_0 = 1, \quad \kappa_1=0, \quad \kappa_2 = -\frac{\pi^2}3= -2 \zeta(2), \quad \kappa_3 = \frac{17}{6}\zeta(3).
\]
Note that the objective of~\cite{GZ} is the computation of $\kappa_3$ for the 17 similar families of K3 surfaces, in order to verify the Gamma Conjecture in mirror symmetry. Golyshev and Zagier also evaluate a few more of the higher constants numerically (see~\cite[page 46]{GZ}):
\[\bal
&\kappa_4 = \frac{\pi^4}{45} = \frac45 \zeta(2)^2 = 2 \zeta(4)\\
&\kappa_5 = \frac75 \zeta(5) - \frac{17}3 \zeta(2)\zeta(3)\\
&\ldots\\
&\kappa_{11} = \frac23 \zeta(3,5,3) + \text{ a $\Q$-linear combination of products of} \\
& \qquad\qquad\qquad\qquad\qquad\text{ zeta values of total weight }11
\eal\]
Remarkably, $\kappa_{11}$ is the first one involving multiple zeta values along with ordinary zeta values. 
David Broadhurst was able to find similar experimental expressions in terms of multiple zeta values for the Frobenius constants $\kappa_{n}$ in this example up to $n = 15$,~\cite{Broadhurst}.
\end{ex}
 
Examples~\ref{beauville-example} and~\ref{apery-example} seem to be rather special. We do not expect the Frobenius constants of a Gau\ss--Manin connection to be expressible as $\Q$-linear combinations of products of zeta or multiple zeta values in general. Still, we would like to understand why the numerics is so remarkable in the Beauville families, and other families of Calabi--Yau manifolds that arise in the mirror symmetry program for Fano manifolds. In Section~\ref{sec:Frob-near-MUM} we will show that for a geometric differential operator $L$ (certain expressions in) the Frobenius constants corresponding to actual solutions of $L$ (with $k < m(\rho)$) are periods of the limiting Hodge structure at $t=0$. However, there is no reason to expect that the operators $(D-\rho)^j L$ with $j>0$ are geometric. From this point of view, it is surprising that the higher Frobenius constants in the above examples are periods. This fact will be explained by our main result, which we shall state now.

\begin{thm}\label{gamma-apery-thm} Let $L$ be a differential operator on $U=\P^1 \setminus S$ with regular singularities. Let $K \subseteq \C$ be a field such that there is a monodromy-invariant $K$-structure on the local system of solutions $Sol(L) = Sol(L)_K \otimes_K \C$.  

Assume that $c \ne 0,\infty$ is a special reflection point of $L$ (Definition~\ref{strong-reflection-pt-def}). Let let $\delta(t)$ be a $K$-rational solution of $L$ (a section of $Sol(L)_K$) that spans the image of the variation of the local monodromy around $c$. Let $\{ \kappa_{\rho, n}\}_{\rho \in \sR, n \ge 0}$ be a collection of Frobenius constants for a path $\gamma$ joining $t=0$ and $t=c$, as in Definition~\ref{full-apery-defn}. 

Let $V \subset U_{an}$ be a neighbourhood of $\gamma$ satisfying Assumption~\ref{V-assum}. Then there is a generator $\Gamma_{\xi_0}(s)$ of the $K[e^{\pm 2 \pi i s}]$-module of gamma functions  for the adjoint differential operator $L^\vee$   
\eq{gamma-module-in-main-thm}{
\Gamma_{\xi}(s), \qquad \xi \in H_1(V \,, \, Sol(L^\vee)_K \otimes t^s)
}
such that for every local exponent $\rho \in \sR$ we have
\eq{gamma-kappa-relation-in-main-thm}{
\frac{I(s)}{R(e^{-2 \pi i s})} \Gamma_{\xi_0}(s) = \sum_{n=0}^{\infty} \kappa_{\rho,n} (s-\rho)^n,
}
where $I(s)$ is the indicial polynomial for $L$ at $t=0$ and $R(T) \in K[T]$ is a non-zero polynomial of minimal degree such that $R(\sigma_0)$ annihilates $Image(\sigma_c-1 | Sol(L^\vee))$.
\end{thm}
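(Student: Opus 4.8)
The plan is to extract one and the same meromorphic function of $s$ from each side of \eqref{gamma-kappa-relation-in-main-thm}, and to match the two by means of the adjunction property of the bracket, which is where the real work lies. I would first reinterpret the right-hand side as a monodromy defect of the Frobenius generating series $\Phi(s,t)=\sum_{n\ge 0}a_n(s)t^{n+s}$. By \eqref{Frob-functions} the Frobenius functions $\phi_{\rho,n}$ are the Taylor coefficients of $\Phi$ in $s$ at $\rho$, so near $s=\rho$ one has $\Phi(s,t)=\sum_{n\ge 0}\phi_{\rho,n}(t)\,(s-\rho)^n$. Because $c\neq 0,\infty$ the inhomogeneous term $I(s)t^s$ in \eqref{Frob-def-de} is $\sigma_c$-invariant, so $(\sigma_c-1)\Phi$ is a genuine solution of $L$; and because $c$ is a special reflection point, Lemma~\ref{c-remains-reflection} keeps $c$ a reflection point with the same $\delta$ for all the operators $(D-\rho)^jL$, which is precisely what lets Definition~\ref{full-apery-defn} put $(\sigma_c-1)\phi_{\rho,n}=\kappa_{\rho,n}\,\delta$. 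Summing gives $(\sigma_c-1)\Phi=\bigl(\sum_{n\ge 0}\kappa_{\rho,n}(s-\rho)^n\bigr)\delta$ near $\rho$. Since $\Phi$ is meromorphic in $s$ (the $a_n(s)$ are rational with poles only along $I(s+1)\cdots I(s+n)=0$) and $(\sigma_c-1)\Phi$ is a solution depending analytically on $s$, the relation $(\sigma_c-1)\Phi\wedge\delta=0$ propagates from a neighbourhood of $\rho$ to all $s$. Thus $(\sigma_c-1)\Phi=G(s)\,\delta$ identically for a single meromorphic $G$ whose germ at each $\rho\in\sR$ equals $\sum_{n\ge 0}\kappa_{\rho,n}(s-\rho)^n$; this is the right-hand side of \eqref{gamma-kappa-relation-in-main-thm}.

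Next I would reinterpret the left-hand side as a Mellin transform. By Lemma~\ref{L-adj-refl} the point $c$ is a reflection point of $L^\vee$, and the special reflection hypothesis (Definition~\ref{strong-reflection-pt-def}) is exactly the requirement that the $\sigma_c$-invariant solutions of $L^\vee$ be analytic at $c$; this is the hypothesis needed to apply Proposition~\ref{unique-gamma} to the adjoint connection $M^\vee$, whose solution local system is $Sol(L^\vee)_K$. That proposition identifies the module \eqref{gamma-module-in-main-thm} as generated by $\Gamma_{\xi_0}(s)=R(e^{-2\pi i s})\int_0^c \delta^\vee(t)\,t^{s-1}\,dt$, where $\delta^\vee(t)$ is the solution of $L^\vee$ spanning $Image(\sigma_c-1\,|\,Sol(L^\vee))$ and $R$ is the minimal polynomial of the statement; after normalizing $m^\vee$ by a power of $(t-c)$ the integral converges at $t=c$, and at $t=0$ for $Re(s)\gg 0$. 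Hence $\frac{I(s)}{R(e^{-2\pi i s})}\Gamma_{\xi_0}(s)=I(s)\int_0^c \delta^\vee(t)\,t^{s-1}\,dt$, and it remains to prove $I(s)\int_0^c \delta^\vee(t)\,t^{s-1}\,dt=G(s)$.

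The bridge between the two is the adjunction formula \eqref{int-by-parts}: there is a bilinear concomitant $B(\phi,\psi)$, polynomial in $\phi,\psi$ and their $D$-derivatives, with $\psi\,L\phi-\phi\,L^\vee\psi=D\,B(\phi,\psi)$, restricting on pairs of solutions to the monodromy-invariant bracket \eqref{bracket}. Taking $\phi=\Phi$, $\psi=\delta^\vee$, so that $L\Phi=I(s)t^s$ and $L^\vee\delta^\vee=0$, and integrating against $dt/t$ along the chosen path yields $I(s)\int_0^c \delta^\vee t^{s-1}\,dt=\bigl[B(\Phi,\delta^\vee)\bigr]_0^c$. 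For $Re(s)\gg 0$ the endpoint at $t=0$ vanishes: the local exponents of $L^\vee$ are the negatives of those of $L$, so $B(\Phi,\delta^\vee)$ is $O(t^{s-\rho'})$ there for a local exponent $\rho'$ of $L$. The whole integral therefore reduces to the value of the concomitant at $t=c$, and the task becomes to show that this value reproduces $G(s)$, the point being that the branch behaviour of $B(\Phi,\delta^\vee)$ near $c$ is dictated precisely by the defect $(\sigma_c-1)\Phi=G(s)\delta$, once a $K$-rational normalization of $\delta$ and $\delta^\vee$ is fixed. Matching germs at each $\rho\in\sR$ then gives \eqref{gamma-kappa-relation-in-main-thm}.

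I expect this endpoint analysis at $t=c$ to be the principal obstacle. The concomitant $B(\Phi,\delta^\vee)$ is genuinely multivalued there, and its $t\to c$ limit must be computed from the explicit form of the bracket \eqref{bracket}, tracking how the fractional power or logarithm introduced by $(\sigma_c-1)\Phi=G(s)\delta$ propagates through the bilinear expression. The two local monodromy types behave quite differently and must be treated separately: in the reflection case $\sigma_c\delta^\vee=-\delta^\vee$ the solution $\delta^\vee$ carries a square-root branch point, whereas in the transvection case $\sigma_c\delta^\vee=\delta^\vee$ it is analytic but the bracket $\{\delta^\vee,\delta\}$ degenerates, so that in each case one must check by hand that exactly the factor $G(s)$ survives. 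Securing this identification, rather than the two reductions preceding it, is the crux of the proof.
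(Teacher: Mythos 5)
Your two preliminary reductions are correct, and they coincide with ingredients of the paper's own argument: the generating series $\Phi=\sum_{n\ge 0}\phi_{\rho,n}(t)(s-\rho)^n$ with its three properties $L\Phi=I(s)t^s$, $\sigma_0\Phi=e^{2\pi i s}\Phi$, $(\sigma_c-1)\Phi=\kappa(s)\,\delta$ (where $\kappa(s)=\sum_n\kappa_{\rho,n}(s-\rho)^n$) is exactly the object the paper works with, and Proposition~\ref{unique-gamma} applied to the adjoint connection does identify a generator of \eqref{gamma-module-in-main-thm} as $R(e^{-2\pi i s})\int_0^c\delta^\vee(t)\,t^{s-1}dt$. (The paper even avoids your meromorphic-continuation step by working formally in $(s-\rho)$ at each fixed $\rho$ and observing at the end that the normalized cycle $\xi_0$ is independent of $\rho$.) The genuine gap is in your bridge. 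You integrate $d\{\delta^\vee,\Phi\}=I(s)\,\delta^\vee\,t^{s-1}dt$ over the \emph{open} path from $0$ to $c$, which reduces the theorem to the endpoint evaluation $\lim_{t\to c}\{\delta^\vee,\Phi\}=\lambda\,\kappa(s)$ for some $\lambda\in K^\times$ --- and this evaluation, which you yourself flag as the crux, is never carried out. It is not a routine check. Writing $\Phi=(\Phi-\kappa(s)\ve_c)+\kappa(s)\ve_c$ with $\ve_c\in Sol(L)$, $(\sigma_c-1)\ve_c=\delta$, bilinearity gives $\{\delta^\vee,\Phi\}=\{\delta^\vee,\Phi-\kappa(s)\ve_c\}+\kappa(s)\{\delta^\vee,\ve_c\}$, so one must prove that the bracket of $\delta^\vee$ with the $\sigma_c$-invariant part tends to zero at $c$. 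When $\sigma_c\delta^\vee\ne\delta^\vee$ a symmetry argument constrains the limit, but in the transvection case $\sigma_c\delta^\vee=\delta^\vee$ (which occurs, e.g., when $L^\vee$ is a polylogarithm operator as in Example~\ref{polylog-example}, where $\delta^\vee$ is proportional to $\log^{n-1}(t)$ and is single-valued at $c=1$) monodromy gives no information: the invariant part of $\Phi$ is merely meromorphic at $c$, possibly with poles, and the required vanishing must be wrung out of the vanishing of the coefficients of $L$ at the singular point --- an analysis you do not supply. A further unaddressed point: whatever constant $\lambda$ emerges must be shown to lie in $K^\times$, since a generator of the $K[e^{\pm 2\pi i s}]$-module is only defined up to $K^\times e^{2\pi i m s}$.

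The paper's proof shows that this endpoint analysis can be avoided entirely, and that is precisely where the two arguments part ways. Instead of the open path, it evaluates $I(s)\Gamma_\xi(s)$ on the explicit \emph{closed} $1$-cycle of Lemma~\ref{annlem}, $\xi_R=\sum_m\lambda_m\sigma_0^m\otimes\delta^\vee\otimes e^{-2\pi i m s}+\sigma_c\otimes\ve^\vee\otimes R(e^{-2\pi i s})$ with $(\sigma_c-1)\ve^\vee=\delta^\vee$. Then each integral $\int_{\sigma_j}d\{\psi_j,\Phi\}$ is a monodromy difference $(\sigma_j-1)\{\psi_j,\Phi\}$ evaluated at the regular base point $p$ --- no limits at $0$ or $c$ ever appear --- and the three properties of $\Phi$ together with $R(\sigma_0)\delta^\vee=0$ collapse the sum, by pure algebra, to $R(e^{-2\pi i s})\,\kappa(s)\,\{\ve^\vee+\delta^\vee,\delta\}$. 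The constant $\{\ve^\vee+\delta^\vee,\delta\}$ lies in $K$ because it is the bracket of $K$-rational solutions, and it is nonzero because $(\sigma_c-1)(\ve^\vee+\delta^\vee)=\sigma_c\delta^\vee\ne 0$; this single computation treats your two monodromy cases uniformly. To salvage your route you would essentially have to reprove this identity locally at $t=c$, case by case; the closed-cycle computation is the missing idea that makes the bracket adjunction \eqref{int-by-parts} do all the work.
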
 

Section~\ref{sec:Frob-def-and-gamma} is devoted to the proof of Theorem~\ref{gamma-apery-thm}. Note that the condition of $t=c$ being a special reflection point implies that the $K[e^{\pm 2 \pi i s}]$-module of gamma functions~\eqref{gamma-module-in-main-thm} has rank~1 (see Lemma~\ref{gamma-generators-prop} and the preceding discussion).  Hence a generator of~\eqref{gamma-module-in-main-thm} is defined up to multiplication by $\lambda e^{2 \pi i m s}$ with $\lambda \in K^\times$ and $m \in \Z$. Note that the ambiguity in the definition of Frobenius constants is the same: choosing a different branch of $t^s$ will result in the generating series in the right-hand side of~\eqref{gamma-kappa-relation-in-main-thm} being multiplied by an integer power of $e^{2 \pi i s}$ and rescaling by $\lambda \in K^\times$ corresponds to a different choice of the $K$-rational solution $\delta \in Sol_p(L)$. We see that formula~\eqref{gamma-kappa-relation-in-main-thm} states a correspondence between generators of the module~\eqref{gamma-module-in-main-thm} and collections of Frobenius constants for a given homotopy class of paths~$\gamma$.

\begin{cor}\label{kappas-are-periods} When $L$ is a Picard--Fuchs differential operator, the Frobenius constants $\kappa_{\rho,n}$ belong to the algebra of periods with $2\pi i$ inverted. 
\end{cor}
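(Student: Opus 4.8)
The plan is to feed the conclusion of Theorem~\ref{gamma-apery-thm} into the period interpretation of gamma functions developed in Section~\ref{sec:motivic-gamma}. Fix $\rho \in \sR$. By that theorem the Frobenius constants are precisely the Taylor coefficients at $s=\rho$ of the single function
\[
F(s) := \frac{I(s)}{R(e^{-2\pi i s})}\,\Gamma_{\xi_0}(s),
\]
so it suffices to prove that each coefficient $\tfrac1{n!}F^{(n)}(\rho)$ lies in the algebra of periods with $2\pi i$ inverted. The first preliminary step is to record that $\rho \in \Q$: since $L$ is Picard--Fuchs its monodromy is quasi-unipotent, the eigenvalue $e^{2\pi i \rho}$ of $\sigma_0$ is a root of unity, and hence $\rho$ is rational. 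In particular the scalars $e^{2\pi i \rho n_j}$ that appear below are roots of unity, so they are algebraic.

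Next I would unwind $\Gamma_{\xi_0}(s)$ through its definition~\eqref{gamma-def}. As $L$ is Picard--Fuchs, so is its adjoint $L^\vee$ (the dual of a Gau\ss--Manin connection is again Gau\ss--Manin, via Poincar\'e duality), and therefore $\Gamma_{\xi_0}$ is a \emph{motivic} gamma function: a finite sum $\sum_j e^{2\pi i s n_j}\int_{\sigma_j}\langle m,\ve_j\rangle\, t^{s-1}\,dt$ whose integrands are Gau\ss--Manin periods coupled to the Mellin connection $t^s$. Differentiating in $s$ and specializing to $s=\rho$ turns each factor $e^{2\pi i s n_j}$ into a power of $2\pi i\,n_j$ times the root of unity $e^{2\pi i \rho n_j}$, and turns $t^{s-1}$ into $t^{\rho-1}(\log t)^b/b!$; the prefactor $I(s)/R(e^{-2\pi i s})$ is a holomorphic germ at $s=\rho$ whose Taylor coefficients are polynomials in $2\pi i$, in $e^{-2\pi i\rho}$, and in the algebraic coefficients of $I$ and $R$. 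Consequently each $\kappa_{\rho,n}$ is a finite $\overline{\Q}[2\pi i,(2\pi i)^{-1}]$-linear combination of twisted period integrals
\[
\int_{\sigma_j}\langle m,\ve_j\rangle\,\frac{(\log t)^b}{b!}\,t^{\rho-1}\,dt.
\]

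It remains to identify these integrals as periods, and this is the step I expect to carry the real content. Because $\rho \in \Q$, the rank-one connection $t^\rho$ is a Kummer motive, while the powers of $\log t$ are supplied by the Kummer connection $Sym^b(K_t)$ of Example~\ref{tensor-periods}(iii); both are geometric. Hence the integral above is a period of the Gau\ss--Manin connection $M^\vee \otimes t^\rho \otimes Sym^b(K_t)$ on $U$, and is therefore a period in the sense of Example~\ref{Picard-Fuchs-ex}. Assembling the pieces, $\kappa_{\rho,n}$ is an $\overline{\Q}[2\pi i,(2\pi i)^{-1}]$-combination of Kontsevich--Zagier periods, hence lies in the period algebra with $2\pi i$ inverted.

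The main obstacle is precisely this last identification: one must verify that the twisting by $t^\rho$ and by $\log t$ genuinely assembles into a geometric (Gau\ss--Manin) connection, so that the resulting integrals are bona fide periods rather than merely convergent transcendental integrals. This is exactly where the rationality of $\rho$ is indispensable --- for irrational $\rho$ the object $t^\rho$ is not motivic and the argument would break down. By contrast, the analytic issues (convergence of the improper integral at $0$ and $c$, and the entire extension in $s$) have already been settled in the discussion following Proposition~\ref{unique-gamma}, so no further work is needed there.
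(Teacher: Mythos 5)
Your reduction is sound and in fact runs parallel to the first half of the paper's own argument: rationality of $\rho$ (the paper cites Katz's theorem rather than quasi-unipotence of the monodromy, but both work), differentiation of the identity of Theorem~\ref{gamma-apery-thm} $n$ times at $s=\rho$, and the observation that the expansion coefficients of the prefactor $I(s)/R(e^{-2\pi i s})$ lie in $\overline{\Q}[2\pi i,(2\pi i)^{-1}]$ --- though note they are \emph{not} merely polynomials in $2\pi i$ as you claim: $R(e^{-2\pi i \rho})$ can vanish (e.g.\ in the MUM case $R(T)=(1-T)^m$, $\rho=0$), and the resulting negative powers of $2\pi i$ are precisely why the corollary inverts $2\pi i$. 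This reduces everything, exactly as in the paper, to integrals of the form $\int_{\sigma}\langle m,\ve\rangle\, t^{\rho-1}\log(t)^b\,dt$ with $\ve \in Sol(L^\vee)_\Q$.

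The problem is the last step, which you yourself flag as ``the main obstacle'': that is exactly where the paper does its real work, and your proposal supplies no argument there. You assert that the integral ``is a period of the Gau\ss--Manin connection $M^\vee \otimes t^\rho \otimes Sym^b(K_t)$, and is therefore a period in the sense of Example~\ref{Picard-Fuchs-ex}.'' But Example~\ref{Picard-Fuchs-ex} only \emph{defines} Gau\ss--Manin connections; nowhere prior to this corollary does the paper (or any result you invoke) establish that values of the period pairing for such a connection --- let alone for its tensor product with $t^\rho$ and $Sym^b(K_t)$ --- are Kontsevich--Zagier periods. That implication is the actual content of the corollary, and the paper proves it by two explicit Fubini constructions: writing $\phi(t)=\int_{\gamma_t}\omega_t$ for a continuously varying family of closed chains, it realizes $\int_\sigma \phi(t)\,dt/t$ as $\int_S \omega_t\wedge dt/t$ over the chain $S=\bigcup_{t\in\sigma}\gamma_t$ inside the total space $X$, whose boundary lies over the boundary of $\sigma$; and it realizes $\int_\sigma\phi(t)\log(t)^b\,dt/t$ as $\int_\Sigma \phi(t)\,(dx/x)^b\wedge dt/t$ over the chain $\Sigma=\bigcup_{t\in\sigma}\{\tau_t[0,1]\}^b$ in $U\times(\P^1)^b$ --- which is the concrete geometric incarnation of your coupling to $Sym^b(K_t)$ --- together with the remark that for $\rho\in\Q$ the product $t^\rho\phi(t)$ is again a period function. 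To close your gap you would need either to reproduce such a chain-level construction or to cite an external theorem that period pairings of mixed geometric connections are KZ periods; neither is done, so as written the proof names the decisive step but does not carry it out.
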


\begin{proof} Recall from Example~\ref{Picard-Fuchs-ex} that the local system of solutions of a Picard--Fuchs differential operator $L$ is defined over $K=\Q$. First note that if $\phi(t)$ is a period function (a section of $Sol(L)_\Q$) on $U$ and $\sigma$ is a path on $U$ with rational boundary, then $\int_\sigma \phi(t)dt/t$ is a period. Indeed, possibly shrinking $U$ and deforming $\sigma$, we can find a smooth $f: X\to U$, a continuously varying family  
$\{\gamma_t \subset X_t\}$ 
of closed topological chains, and a class $\omega \in H^*_{DR}(X/U)$, such that 
we can realize $\phi(t) = \int_{\gamma_t}\omega_t$. Because $dt/t$ is a closed $\C$-analytic form of top dimension on $U$, $\wedge dt/t$ defines a map $\Omega^*_{X/U} \to \Omega^{*+1}_X$. We have by Fubini:
$$\int_\sigma \phi(t)dt/t = \int_S (\omega_t \wedge dt/t);\quad S = \bigcup_{t\in \sigma}\gamma_t.
$$
Note that the $\gamma_t$ are closed chains, so the boundary of $S$ is contained in $f^{-1}(\text{boundary of }\sigma)$. It follows that $\int_\sigma \phi(t)dt/t$  is a period in the sense of~\cite{KZ-Periods}. 

Finally, we want to show $\int_\sigma \phi(t)\log(t)^ndt/t$ is a period, assuming the integrand is single-valued along $\sigma$. Let $\tau_t: [0,1] \to \P^1$ be a continuous (in $t$) family of paths from $1$ to $t$. Then $\Sigma:=\bigcup_{t\in \sigma}\{\tau_t[0,1]\}^n$ defines a chain in $U\times (\P^1)^n$. We have
$$\int_\Sigma \phi(t)\wedge (dx/x)^n\wedge dt/t = \int_\sigma \phi(t)\log(t)^n dt/t.
$$
Here the lefthand side is a period, again by Fubini. 

We apply this to formula~\eqref{gamma-kappa-relation-in-main-thm} above. Since $L$ is a Picard--Fuchs differential operator, its local exponents $\rho$ are rational numbers (see e.g.~\cite{Katz70}). Differentiate both sides $k$ times with respect to $s$ and evaluate at $s=\rho$. On the lefthand side the result will be a $\overline{\Q}$-linear combination of products of integral powers of $2 \pi i$ and integrals of the form $\int_\xi \phi(t)t^{\rho} \log (t)^k dt/t$ with $k \le n$ and $\phi \in Sol(L^\vee)_\Q$. Since $\rho \in \Q$, the product $t^{\rho} \phi(t)$ is again a period function, and the above argument shows that values of such integrals are periods.
\end{proof}

\begin{remark}[Ap\'ery's constants] The differential operator $L$ from Example~\ref{apery-example} is closely related to the famous proof of irrationality of $\zeta(3)$,~\cite{Apery79}. Namely, Ap\'ery shows that the two sequences 
\begin{small}\[\bal
&A_n = \sum_{k=0}^n \binom{n}{k}^2 \binom{n+k}{k}^2 \in \Z,\\
&B_n = \sum_{k=0}^n \binom{n}{k}^2 \binom{n+k}{k}^2 \left( \sum_{m=0}^n \frac1{m^3} + \sum_{m=1}^k \frac{(-1)^{m-1}}{2 m^3 \binom{n}{m} \binom{n+m}{m} } \right) \in \Q
\eal\]
\end{small}
satisfy 
\eq{Apery-approx}{
|B_n - A_n \zeta(3)| < c^n,
}
where $c=17-12\sqrt{2}=0.029\ldots$ is the closest to $0$ reflection points of $L$. Irrationality then follows from a bound for denominators of $\{ B_n \}$. The above sequences satisfy a recurrence relation, which can be stated in terms of their generating functions $A(t):=\sum_{n \ge 0} A_n t^n$ and $B(t):=\sum_{n \ge 1} B_n t^n$  as
$L A = 0$, $L B = 6 t$. In order to include $B$ we need to consider the operator $\widetilde L = (D -1) L$, for which $c=17-12\sqrt{2}$ is again a special reflection point. In terms of the Frobenius solutions, we then have $A(t) = \phi_{0,0}(t)$, $B(t) = 6 \, \phi_{1,0}(t)$. Ap\'ery's approximation~\eqref{Apery-approx} shows that solution $B(t)-\zeta(3)A(t)$ extends analytically through $t=c$ or, equivalently, that $\frac{\kappa_{1,0}}{\kappa_{0,0}} = \frac{\zeta(3)}{6}$. Relation between this number  and the Frobenius constants $\kappa_{0,m}$, $m \ge 0$ from Example~\ref{apery-example} is as follows. 

One of the outcomes of Theorem~\ref{gamma-apery-thm} is existence of a meromorphic function 
\[
\kappa(s) := I(s)R(e^{-2 \pi i s})^{-1}\Gamma_{\xi_0}(s)
\]
in the whole complex plane $\C$ whose expansion coefficients at each $\rho \in \sR$ are equal to $\kappa_{\rho,0}, \kappa_{\rho,1}, \ldots$ For the operator $L$ from Example~\ref{apery-example} and normalization $\kappa_{0,0}=1$ we have
\[
\kappa(s) = 1 - \tfrac{\pi^2}{3} s^2 + \tfrac{17}{6}\zeta(3)s^3 + \ldots
\] 
The reader may check that the Frobenius method for $\widetilde L = (D -1) L$ yields the same sequence of rational functions $\{ a_n(s); n \ge 0\}$ as in the Frobenius method  for $L$. Therefore we get the same function $\kappa(s)$ as above and, according to the previous paragraph, one has 
\[
\kappa(1) = \tfrac{\zeta(3)}{6}.
\] 
There seem to be no direct link between this value and appearance of $\zeta(3)$ in $\kappa_{0,3}=\frac{17}6\zeta(3)$. (We refer to a forthcoming paper of Kerr, \cite{K2} for more details.)

Gamma functions satisfy difference equations (Proposition~\ref{functional_equation}), and the discussion just before Proposition~\ref{functional_equation} shows that $\kappa(s)$ satisfies a difference equation as well. In our example the functional equation has three terms, and we can conclude that
$\kappa(m) \in \Q \zeta(3) + \Q$ for any $m \in \Z$.        
\end{remark}

To get a sense of what the polynomial $R(T)$ in~\eqref{gamma-kappa-relation-in-main-thm} looks like, let us make a few observations. For simplicity, let us assume that $I(s)$ has no pair of roots that differ by a non-zero integer. Then $\sR$ is the full set of roots of $I(s)$ and the characteristic polynomial of $\sigma_0$ on $Sol_p(L)$ is given by $\prod_{\rho \in \sR}(T-e^{2 \pi i \rho})^{m(\rho)}$, see Remark~\ref{local-exponents-and-monodromy-rmk}. It is easy to see that the indicial polynomial of $L^\vee$ at $t=0$ is given by $I(-s)$, and hence the characteristic polynomial of $\sigma_0$ on $Sol_p(L^\vee)$ is given by $P(T)=\prod_{\rho \in \sR}(T-e^{-2 \pi i \rho})^{m(\rho)}$. Since  $P(\sigma_0)$ vanishes on $Sol_p(L^\vee)$, it follows that $R(T)$ divides $P(T)$. We will end this section with making the statement of Theorem~\ref{gamma-apery-thm} more concrete in a special case when $\sigma_0$ is \emph{maximally unipotent}. 

\begin{cor}\label{MUM-case-of-main-thm} Let the assumptions be as in Theorem~\ref{gamma-apery-thm} and suppose that the indicial polynomial of $L$ at $t=0$ is $I(s)=s^r$. Then generators $\Gamma_{\xi_0}(s)$ of the $K[e^{\pm 2 \pi i s}]$-module of gamma functions~\eqref{gamma-module-in-main-thm} satisfy
\[
\Gamma_{\xi_0}(0) \ne 0,
\]
and the Frobenius constants $\kappa_n := \kappa_{0,n}$ are given by
\[
\frac{s^r}{(1-e^{-2 \pi i s})^{r-d}} \Gamma_{\xi_0}(s) = \sum_{n=d}^\infty \kappa_n s^n,
\] 
where $d := \min\{n \ge 0 : \kappa_n \ne 0\} < r$.
\end{cor}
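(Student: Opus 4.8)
The plan is to deduce everything from Theorem~\ref{gamma-apery-thm}, which in the present situation ($\rho=0$, $I(s)=s^r$) reads
\[
\kappa(s)\;:=\;\frac{s^r}{R(e^{-2\pi i s})}\,\Gamma_{\xi_0}(s)\;=\;\sum_{n=0}^\infty \kappa_n s^n,
\]
a power series vanishing to order exactly $d$ at $s=0$. Since $I(s)=s^r$ forces all local exponents of $L$ at $0$ to vanish, the same holds for $L^\vee$ (whose indicial polynomial at $0$ is $I(-s)=(-1)^r s^r$), so $\sigma_0$ is unipotent on $Sol(L^\vee)_p$. By Lemma~\ref{L-adj-refl} the space $Image(\sigma_c-1\,|\,Sol(L^\vee))=K\delta$ is one-dimensional, so the minimal polynomial $R(T)$ with $R(\sigma_0)\delta=0$ has the form $R(T)=(T-1)^m$, where $m\ge 1$ is the least integer with $(\sigma_0-1)^m\delta=0$. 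Near $s=0$ one has $R(e^{-2\pi i s})=(e^{-2\pi i s}-1)^m=(-2\pi i)^m s^m(1+O(s))$, so comparing orders of vanishing in the displayed identity gives $d=(r-m)+\mathrm{ord}_{s=0}\Gamma_{\xi_0}$. Thus the corollary reduces to the single assertion $\Gamma_{\xi_0}(0)\ne 0$: granting it we obtain $m=r-d$ (hence $d<r$ since $m\ge1$), and substituting $R(e^{-2\pi i s})=(e^{-2\pi i s}-1)^{r-d}=(-1)^{r-d}(1-e^{-2\pi i s})^{r-d}$ yields the stated formula after absorbing the sign $(-1)^{r-d}\in K^\times$ into the normalization of $\Gamma_{\xi_0}$, as permitted by the ambiguity recorded after Theorem~\ref{gamma-apery-thm}.

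To prove $\Gamma_{\xi_0}(0)\ne 0$ I would use the explicit description of the generator in Proposition~\ref{unique-gamma}, namely $\Gamma_{\xi_0}(s)=R(e^{-2\pi i s})\,J(s)$ with $J(s)=\int_0^c \langle m,\delta\rangle\,t^{s-1}\,dt$, and analyze the pole of the Mellin transform $J(s)$ at $s=0$. The integrand $\phi(t):=\langle m,\delta\rangle$ is a solution of $L^\vee$, and its behaviour as $t\to 0$ is governed by the Frobenius expansion at the unipotent point $0$, whose solutions involve powers of $\log t$ exactly as in~\eqref{Frob-functions}. Since $\sigma_0-1$ lowers the power of $\log t$ by one and $(\sigma_0-1)^{m-1}\delta\ne 0=(\sigma_0-1)^m\delta$, the function $\phi$ has top log-term precisely $\log(t)^{m-1}$ with nonzero coefficient near $t=0$. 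Using $\int_0^\varepsilon \log(t)^{m-1}t^{s-1}\,dt\sim (-1)^{m-1}(m-1)!\,s^{-m}$, this shows $J(s)$ has a pole of order exactly $m$ at $s=0$ with nonzero leading coefficient, so the order-$m$ zero of $R(e^{-2\pi i s})$ cancels it precisely and $\Gamma_{\xi_0}(0)=\lim_{s\to 0}R(e^{-2\pi i s})J(s)\ne 0$.

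The main obstacle is this last pole computation: identifying the exact pole order of $J(s)$ at $s=0$ and, in particular, verifying that its leading coefficient is nonzero. This hinges on knowing that the top $\log$-power in the expansion of $\phi=\langle m,\delta\rangle$ near $t=0$ is genuinely $\log(t)^{m-1}$ and not lower, which is exactly the statement $(\sigma_0-1)^{m-1}\delta\ne 0$ transported from the flat section $\delta$ to the multivalued function $\phi$ through the identification of $\sigma_0$ with analytic continuation around $t=0$. Everything else — the unipotence of $\sigma_0$ on $Sol(L^\vee)$, the shape $R(T)=(T-1)^m$, and the bookkeeping of orders of vanishing — is routine once this computation is in place.
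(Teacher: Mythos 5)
Your proposal is correct in outline and reaches the key point, $\Gamma_{\xi_0}(0)\ne 0$, by a genuinely different route than the paper. The paper never analyzes the Mellin integral near $t=0$. Instead it proves the two inequalities $m+d\ge r$ and $m+d\le r$ separately: the first exactly as you do (entirety of $\Gamma_{\xi_0}$ plus comparison of vanishing orders in \eqref{kappa-gamma-rel-MUM}), the second by a duality argument — since $\kappa_k=0$ for $k<d$, the first $d$ Frobenius solutions $\phi_0,\dotsc,\phi_{d-1}$ of $L$ are $\sigma_c$-invariant, their span $A$ is $\sigma_0$-stable, hence $A$ is orthogonal under the perfect pairing $\{*,*\}$ of \eqref{bracket} to the $m$-dimensional space $\C[\sigma_0]\delta^\vee$, forcing $d+m\le r$. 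The nonvanishing $\Gamma_{\xi_0}(0)\ne 0$ then falls out of the identity, rather than being its input. The paper also proves $d<r$ independently (the images of the basis $\phi_0,\dotsc,\phi_{r-1}$ under $\sigma_c-1$ span $Image(\sigma_c-1|Sol_p(L))$, so some $\kappa_k$ with $k<r$ is nonzero), whereas you get $d<r$ for free from $m\ge 1$ once nonvanishing is known. What your route buys is a self-contained local computation with an explicit leading constant; what the paper's route buys is that it avoids asymptotics of the integral entirely, using only formal properties of the pairing.

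One step in your pole computation must be tightened, and it is slightly more than the obstacle you flagged. Knowing that the top log-power of $\phi=\langle m,\delta^\vee\rangle$ is exactly $\log(t)^{m-1}$ is necessary but not sufficient: writing $\phi=\sum_{j=0}^{m-1}\log(t)^j b_j(t)$ with $b_j$ analytic at $0$, the pole of $J(s)$ at $s=0$ has order $1+\max\{j\,:\,b_j(0)\ne 0\}$, because any part of $b_j$ vanishing at $t=0$ contributes a term regular at $s=0$. So you need $b_{m-1}(0)\ne 0$, not merely $b_{m-1}\not\equiv 0$; the phrase ``nonzero coefficient near $t=0$'' elides this. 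The stronger statement does hold here: the indicial polynomial of $L^\vee$ at $0$ is $I(-s)=(-1)^r s^r$, so the Frobenius solutions $\phi^\vee_k=\sum_{j\le k}\frac{\log(t)^j}{j!}\psi^{an}_{k-j}(t)$ of $L^\vee$, normalized by $\psi^{an}_0(0)=1$ and $\psi^{an}_j(0)=0$ for $j>0$, form a basis of $Sol_p(L^\vee)$. Expanding $\delta^\vee=\sum_{k\le m-1}c_k\phi^\vee_k$, the condition $(\sigma_0-1)^{m-1}\delta^\vee\ne 0=(\sigma_0-1)^m\delta^\vee$ forces $c_{m-1}\ne 0$ and $c_k=0$ for $k\ge m$, whence $b_{m-1}=\frac{c_{m-1}}{(m-1)!}\psi^{an}_0$ and $b_{m-1}(0)=\frac{c_{m-1}}{(m-1)!}\ne 0$. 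With this line added, your argument is complete: $J(s)$ has a pole of order exactly $m$ with leading coefficient $(-1)^{m-1}(m-1)!\,b_{m-1}(0)\ne 0$, which the order-$m$ zero of $R(e^{-2\pi i s})$ cancels precisely.
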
    
\begin{proof} Let us first show that $d < r$. In our case $I(s)$ has only one root $\rho=0$, and hence the Frobenius solutions $\phi_k := \phi_{0,k}$ with $0 \le k < r$ form a basis in $Sol_p(L)$. It follows that their images under $\sigma_c-1$ span $Image(\sigma_c-1 | Sol_p(L))$. In particular, at least one of the first $r$ Frobenius constants $\kappa_0,\ldots,\kappa_{r-1}$ is non-zero. 

Since the characteristic polynomial of $\sigma_0$ on $Sol_p(L^\vee)$ is given by $P(T)=(T-1)^r$ and $R(T)$ in Theorem~\ref{gamma-apery-thm} divides $P(T)$, we can simply take $R(T)=(1-T)^m$ where the degree $m \le r$ is determined as follows. With any non-zero $\delta^\vee \in Image( \sigma_c-1|Sol_p(L^{\vee}))$ we have 
\[
m = \min \{ k > 0 : \delta^\vee, \sigma_0 \delta^\vee, \ldots, \sigma_0^k \delta^\vee \text{ are linearly dependent} \}.
\]
Relation~\eqref{gamma-kappa-relation-in-main-thm} now reads as
\eq{kappa-gamma-rel-MUM}{
\frac{s^r}{(1 - e^{-2 \pi i s})^m} \Gamma_{\xi_0}(s) = \sum_{n=d}^\infty \kappa_n s^n, \quad \kappa_d \ne 0.
}
It remains to show that $m+d=r$. Since $\Gamma_{\xi_0}(s)$ is an entire function, it follows from~\eqref{kappa-gamma-rel-MUM} that $m+d \ge r$. To show that $m+d \le r$, note that  $\{ \delta^\vee, \phi_{k}\} = 0 $ for $0 \le k < d$ because the first $d-1$ Frobenius solutions are $\sigma_c$-invariant. Since $\sigma_0$ preserves the $d$-dimensional subspace $A := \sum_{k=0}^{d-1} \C \phi_k \subset Sol_p(L)$, it follows that $A$ is orthogonal to the $m$-dimensional subspace $\C[\sigma_0] \delta^\vee$. Since the duality pairing $\{*,*\}$ is perfect, it follows that $m + d \le r$.
\end{proof}

\section{Proof of the relation between Frobenius constants and gamma functions}\label{sec:Frob-def-and-gamma}

In this section we prove Theorem~\ref{gamma-apery-thm}. We start with some preparations. Let $M$ be an algebraic connection of rank $r$ on an open subset $U \subseteq \G_m$ and $\sO=\sO_U$ be the ring of functions of $U$. The ring of differential operators $\sD = \sD_U$ is generated over $\sO$ by the derivation $D=t \frac{d}{dt}$. We fix a generator $m \in M$ and a differential operator
\[
L = q_0(t) D^r + q_1(t) D^{r-1} + \ldots + q_r(t), \quad q_i \in \sO
\]
such that $L m = 0$. Possibly after passing to a smaller open set $U$, we assume that $q_0 \in \sO^{\times}$ is a unit. It follows that $M \cong \sD / \sD L$ is a free $\sO$-module of rank $r$ with the basis $m,Dm,\ldots,D^{r-1}m$. 

The dual connection on $M^\vee = Hom_\sO(M,\sO)$ is determined by the identity
\eq{dual-connection}{
\la \xi, D \eta \ra + \la D \xi, \eta \ra = D \la \xi, \eta \ra , \qquad \xi \in M, \eta \in M^\vee.
}
Let $e_0,\ldots,e_{r-1} \in M^\vee$ be the basis dual to $m,Dm,\ldots,D^{r-1}m \in M$, that is we have
\[
\la D^j m, e_i \ra = \delta_{i,j}.
\]

Using~\eqref{dual-connection} one can easily check that the $\sD$-module structure on $M^\vee$ is given by
\eq{dual-conn}{D e_i + e_{i-1} = \frac{q_{r-i}}{q_0} e_{r-1},\quad 0 \le i \le r-1,
}
with the convention that $e_{-1}=0$. The adjoint differential operator is defined as
\[
L^\vee = (-D)^r q_0(t) + (-D)^{r-1} q_1(t) +\ldots + q_r(t).
\]
Below we will check that $M^\vee \cong \sD/\sD L^\vee$. An important role in the proof of Theorem~\ref{gamma-apery-thm} is played by the following $\C$-linear bracket 
\eq{bracket}{
\bal
&\{*,*\} \;:\; \sO^{an} \otimes_\C \sO^{an} \to \sO^{an},\\
&\{\psi,\phi\} = \underset{\begin{tiny}\bal &h,\nu,i \ge 0 \\ h+\nu&+i=r-1 \eal\end{tiny}} \sum (-D)^h (q_\nu \psi) D^i \phi.
\eal}
We will show that this bracket satisfies
\eq{int-by-parts}{
D\{\psi,\phi\} = \psi(L \phi) - (L^\vee\psi)\phi \,.
}
In particular, $\{*,*\}$ is constant on  $Sol(L^\vee) \otimes_{\C} Sol(L)$ and coincides with the duality pairing $\langle *, * \rangle$ on $\sM \otimes_K \sM^\vee$. It seems that property~\eqref{int-by-parts} goes back at least to Lagrange. For the sake of completeness, we will give its proof in the next series of lemmas. The reader willing to take~\eqref{int-by-parts} for granted may proceed directly to the proof of Theorem~\ref{gamma-apery-thm}.

}

\begin{lem}\label{dual-via-adjoint-L-lemma}We have $L^\vee (q_0^{-1} e_{r-1}) = 0$. Thus, the map $1\mapsto q_0^{-1} e_{r-1}$ yields an isomorphism $\sD/\sD L^\vee \cong M^\vee$. 
\end{lem}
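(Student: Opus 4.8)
The plan is to prove the vanishing $L^\vee(q_0^{-1}e_{r-1})=0$ by a direct telescoping of the recursion \eqref{dual-conn}, and then to upgrade this to the stated isomorphism by a cyclicity-plus-rank argument.

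First I would reverse the indexing in \eqref{dual-conn}. Put $f_k := e_{r-1-k}$ for $0 \le k \le r$, so that $f_0 = e_{r-1}$ and, by the convention $e_{-1}=0$, also $f_r = 0$. Substituting $i = r-1-k$ into the relation $De_i + e_{i-1} = (q_{r-i}/q_0)e_{r-1}$ turns it into the first-order recursion
\[
f_{k+1} = -D f_k + \frac{q_{k+1}}{q_0}\, e_{r-1}, \qquad 0 \le k \le r-1 .
\]
Solving it inductively gives $f_k = P_k\, e_{r-1}$, where $P_k\in\sD$ is determined by $P_0 = 1$ and $P_{k+1} = -D P_k + q_{k+1}/q_0$; a one-line induction yields the closed form
\[
P_k = \sum_{j=0}^{k} (-D)^{k-j}\Bigl(\frac{q_j}{q_0}\Bigr), \qquad \frac{q_0}{q_0}=1 .
\]
The only additional input is the boundary value $f_r = 0$, which forces $P_r\, e_{r-1} = 0$. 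Since $P_r = \sum_{j=0}^r (-D)^{r-j}(q_j/q_0)$ is exactly the composite $L^\vee \circ q_0^{-1}$ (multiplication by $q_0^{-1}$ followed by $L^\vee$), this reads $L^\vee(q_0^{-1}e_{r-1})=0$, as desired.

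For the isomorphism I would extract two facts from the same recursion. First, induction on $k$ using $f_{k+1} = -Df_k + (q_{k+1}/q_0)e_{r-1}$ and $f_0 = e_{r-1}$ shows $f_k \in \sD\, e_{r-1}$ for all $k$; hence every basis element $e_0,\dots,e_{r-1}$ lies in $\sD\, e_{r-1}$, so $M^\vee$ is cyclic, generated by $q_0^{-1}e_{r-1}$ (a unit multiple of $e_{r-1}$). Consequently $P \mapsto P(q_0^{-1}e_{r-1})$ is a surjection $\sD \surj M^\vee$ that annihilates $\sD L^\vee$ by the previous step, and therefore factors through a surjection $\sD/\sD L^\vee \surj M^\vee$. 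Second, $L^\vee$ has order $r$ with unit leading coefficient $(-1)^r q_0$, so $\sD/\sD L^\vee$ is $\sO$-free of rank $r$ on $1, D, \dots, D^{r-1}$, while $M^\vee$ is $\sO$-free of rank $r$. A surjection of free modules of equal finite rank over the commutative ring $\sO$ is an isomorphism, which completes the proof.

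There is no genuine obstacle here: every computation is routine once the right bookkeeping is in place. The one idea that makes the argument transparent is the reindexing $f_k = e_{r-1-k}$, which converts \eqref{dual-conn} into a telescoping recursion whose closed-form solution is visibly the adjoint operator. The two points that merely require care are the non-commutativity of $D$ with multiplication (so that each $(-D)^{k-j}(q_j/q_0)$ must be read as a composite operator in $\sD$) and the boundary identity $f_r = e_{-1} = 0$, which is where the order $r$ of $L$ — equivalently, the rank of $M^\vee$ — enters the telescope and closes it off.
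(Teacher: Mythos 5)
Your proof is correct and is essentially the paper's own argument: solving your recursion in closed form and imposing the boundary value $f_r=e_{-1}=0$ is exactly the paper's telescoping computation $L^\vee(q_0^{-1}e_{r-1})=\sum_{i=0}^{r-1}(-D)^i(De_i+e_{i-1})+(-D)^re_{r-1}=0$ obtained by substituting \eqref{dual-conn} into the expansion of $L^\vee$. Your second paragraph (cyclicity from $f_k=P_k e_{r-1}$, factoring through $\sD/\sD L^\vee$, and the surjection of free $\sO$-modules of equal rank $r$) correctly fills in the isomorphism claim, which the paper asserts but leaves implicit.
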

\begin{proof} Using~\eqref{dual-conn} we compute
\[L^\vee (q_0^{-1} e_{r-1}) = \sum_{i=0}^r (-D)^i \left(\frac{q_{r-i}}{q_0}e_{r-1} \right)= \\
\sum_{i=0}^{r-1}(-D)^i(D e_i +e_{i-1}) + (-D)^r e_{r-1}   = 0.
\]

\end{proof}

\begin{lem}\label{m-vee-rho-lemma} Let $m^\vee := q_0^{-1} e_{r-1} \in M^\vee$ be the element from Lemma~\ref{dual-via-adjoint-L-lemma}. Let $\rho_0,\ldots,\rho_{r-1} \in M$ be the basis dual to $m^\vee, Dm^\vee,\ldots,D^{r-1} m^\vee \in M^\vee$.  We have
\eq{pairing-e-rho}{\langle \rho_j, e_i \rangle = \underset{\begin{tiny}\bal &h,\nu \ge 0 \\ h+\nu&=r-1-i \eal\end{tiny}} \sum (-1)^h \binom{h}{j} D^{h-j}q_\nu
}
for all for $0\le j, i\le r-1$. In particular,  $\langle \rho_j, e_i \rangle = 0$ when $j+i>r-1$ and we have $\rho_{r-1} = (-1)^{r-1} q_0 m$.
\end{lem}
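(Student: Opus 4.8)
The plan is to recast the asserted formula as an explicit expansion of the dual basis, reduce it to a clean telescoping identity, and prove that identity by a short induction. First, since $\rho_0,\dots,\rho_{r-1}$ is the basis of $M$ dual to $m^\vee, Dm^\vee,\dots,D^{r-1}m^\vee$, the pairing $\langle \rho_j, e_i\rangle$ is exactly the coefficient of $D^j m^\vee$ in the expansion of $e_i\in M^\vee$ in the basis $\{D^j m^\vee\}$. Thus~\eqref{pairing-e-rho} is equivalent to
\[
e_i=\sum_{j=0}^{r-1}\Big(\sum_{\substack{h,\nu\ge0\\ h+\nu=r-1-i}}(-1)^h\binom{h}{j}\,D^{h-j}q_\nu\Big)D^j m^\vee .
\]
For each fixed $\nu$, so that $h=r-1-i-\nu$, the inner sum over $j$ is precisely the Leibniz expansion $(-D)^h(q_\nu f)=\sum_j(-1)^h\binom{h}{j}(D^{h-j}q_\nu)(D^j f)$ applied to $f=m^\vee$. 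This collapses the double sum and reduces the lemma to the single identity
\begin{equation*}
e_{r-1-s}=\sum_{\nu=0}^{s}(-D)^{s-\nu}(q_\nu m^\vee),\qquad 0\le s\le r-1, \tag{$\star$}
\end{equation*}
where $s=r-1-i$.

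I would then prove $(\star)$ by induction on $s$. The base case $s=0$ is just $q_0 m^\vee=e_{r-1}$, i.e.\ the definition $m^\vee=q_0^{-1}e_{r-1}$ from Lemma~\ref{dual-via-adjoint-L-lemma}. For the inductive step, peeling off the $\nu=s+1$ term gives
\[
\sum_{\nu=0}^{s+1}(-D)^{s+1-\nu}(q_\nu m^\vee)=(-D)\Big(\sum_{\nu=0}^{s}(-D)^{s-\nu}(q_\nu m^\vee)\Big)+q_{s+1}m^\vee=-D(e_{r-1-s})+q_{s+1}m^\vee
\]
by the inductive hypothesis. The dual-connection relation~\eqref{dual-conn} at index $i=r-1-s$ reads $D e_{r-1-s}=\tfrac{q_{s+1}}{q_0}e_{r-1}-e_{r-2-s}$, while $q_{s+1}m^\vee=\tfrac{q_{s+1}}{q_0}e_{r-1}$; the two multiples of $e_{r-1}$ cancel and the expression equals $e_{r-2-s}$, completing the induction. (As a consistency check, the same computation at $s=r$ returns $e_{-1}=0=L^\vee m^\vee$, in agreement with Lemma~\ref{dual-via-adjoint-L-lemma}.)

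The two ``in particular'' assertions then follow by inspection of~\eqref{pairing-e-rho}: the factor $\binom{h}{j}$ kills every summand with $j>h$, and since $h\le r-1-i$ the whole sum vanishes once $j+i>r-1$. For $j=r-1$ only the term $i=0$, $h=r-1$, $\nu=0$ survives, giving $\langle\rho_{r-1},e_0\rangle=(-1)^{r-1}q_0$ and $\langle\rho_{r-1},e_i\rangle=0$ for $i\ge1$. Comparing with $\langle(-1)^{r-1}q_0\,m,e_i\rangle=(-1)^{r-1}q_0\,\delta_{0i}$ and using that $\{e_i\}$ is a basis yields $\rho_{r-1}=(-1)^{r-1}q_0\,m$.

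I expect the only genuine obstacle to be the bookkeeping in the first step: recognizing that the binomial double sum in~\eqref{pairing-e-rho} is nothing more than a sum of iterated operators $(-D)^{h}(q_\nu m^\vee)$, which is what converts the unwieldy explicit formula into the telescoping identity $(\star)$. Once $(\star)$ is isolated, the remainder is a two-line induction driven entirely by the $\sD$-module structure~\eqref{dual-conn}, requiring no further combinatorial or analytic input.
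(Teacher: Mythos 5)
Your proposal is correct and follows essentially the same route as the paper: both reduce the formula, via the Leibniz rule, to the identity $e_{r-1-k}=\sum_{\nu=0}^{k}(-D)^{k-\nu}(q_\nu m^\vee)$, which the paper obtains by applying~\eqref{dual-conn} successively for $i=r-1,r-2,\ldots$ — precisely your induction, unrolled. The only cosmetic differences are the direction of the argument (the paper derives the identity first and then reads off coefficients, you isolate it as $(\star)$ and prove it afterwards) and your explicit verification of the ``in particular'' claims, which the paper leaves to inspection.
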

\begin{proof} 
Applying~\eqref{dual-conn} consequently with $i=r-1,r-2, \ldots$ we find that 
\[\bal
e_{r-1-k} &= (-D)^k e_{r-1}+(-D)^{k-1}\left(\frac{q_1}{q_0}e_{r-1}\right)+\cdots + (-D)\left(\frac{q_{k-1}}{q_0}e_{r-1}\right) + \frac{q_k}{q_0}e_{r-1} \\
&= \sum_{\nu=0}^{k} (-D)^{k-\nu}(q_\nu m^\vee)
\eal\]
for all $0\le k \le r-1$.
The coefficient of $D^j m^\vee$ in the latter sum equals $0$ when $j>k$ and
\[
\sum_{\nu=0}^{k-j}(-1)^{k-\nu}\binom{k-\nu}{j}D^{k-\nu-j}q_\nu 
\]
when $j \le k$. This expression for $k=r-1-i$ coincides with~\eqref{pairing-e-rho}.

\end{proof}

One can easily check that an element $\sum_{i=0}^{r-1} \phi_i e_i  \in M_{an}^\vee$ is a horizontal section if and only if $\phi_i = D^i \phi_0$ for all $i>0$ and $L \phi_0 = 0$. This observation identifies the local system $Sol(L)$ with $\sM^\vee = (M_{an}^\vee)^{\nabla^\vee=0}$.

\begin{lem} Recall that $\{e_i\}\subset M^\vee$ is the dual basis to $\{D^im\}\subset M$. Similarly, $\{\rho_i\} \subset M$ is the dual basis to $\{D^i m^\vee \}\subset M^\vee$. With this notation, define $\C$-linear maps $\eta: \sO^{an} \to M_{an}^\vee$ (resp. $\xi: \sO^{an} \to M_{an}$) by 
\eq{eta-phi-maps}{\eta(\phi) = \sum_{i=0}^{r-1} (D^i\phi)e_i;\quad \xi(\psi) = \sum_{i=0}^{r-1} (D^i\psi)\rho_i.
} 
We define a bracket $\{*,*\} : \sO^{an} \otimes_\C \sO^{an} \to \sO^{an}$ by composing the tensor product
\[\bal& \; \xi \otimes \eta : \sO^{an} \otimes_\C \sO^{an} \to M_{an} \otimes_{\sO^{an}} M_{an}^\vee; \\
 &(\xi \otimes \eta)(\psi\otimes \phi) = \sum_{j,i=0}^{r-1}(D^j\psi)(D^i\phi)\rho_j\otimes e_i
\eal\]
with the duality pairing $\langle * , * \rangle$:
\[\{\psi,\phi\} \; := \langle \xi(\psi), \eta(\phi) \rangle = \sum_{j,i=0}^{r-1} (D^j\psi) (D^i\phi)\langle \rho_j,e_i\rangle. 
\]
This bracket satisfies
\eq{int-by-parts1}{
D  \{\psi,\phi\}  = \psi (L \phi) - (L^\vee \psi) \phi 
}
and can be expressed as
\eq{bracket1}{
\bal \{\psi,\phi\} = \underset{\begin{tiny}\bal h,&\nu,i \ge 0 \\ h+\nu&+i=r-1 \eal\end{tiny}} \sum (-1)^h \Bigl( D^h(q_\nu \psi) \Bigr) D^i \phi. \\
\eal}
\end{lem}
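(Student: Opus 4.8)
The plan is to prove the two assertions separately, beginning with the closed formula~\eqref{bracket1}, whose proof is purely combinatorial. Starting from $\{\psi,\phi\}=\sum_{j,i=0}^{r-1}(D^j\psi)(D^i\phi)\langle\rho_j,e_i\rangle$, I would expand the proposed right-hand side with the Leibniz rule $D^h(q_\nu\psi)=\sum_{j=0}^h\binom{h}{j}(D^{h-j}q_\nu)(D^j\psi)$. Collecting terms, the coefficient of $(D^j\psi)(D^i\phi)$ in $\sum_{h+\nu+i=r-1}(-1)^h(D^h(q_\nu\psi))(D^i\phi)$ equals $\sum_{h+\nu=r-1-i,\,h\ge j}(-1)^h\binom{h}{j}D^{h-j}q_\nu$, which is precisely the value of $\langle\rho_j,e_i\rangle$ recorded in Lemma~\ref{m-vee-rho-lemma}. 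Matching the two sides coefficient by coefficient gives~\eqref{bracket1}.

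For~\eqref{int-by-parts1} the cleanest route passes through the connection, and rests on the identity $D\,\eta(\phi)=(L\phi)\,m^\vee$. To prove it, I would apply $D$ to $\eta(\phi)=\sum_{i=0}^{r-1}(D^i\phi)e_i$, substitute $De_i=\tfrac{q_{r-i}}{q_0}e_{r-1}-e_{i-1}$ from~\eqref{dual-conn}, and observe that the terms $(D^{i+1}\phi)e_i$ telescope against $-(D^i\phi)e_{i-1}$, leaving $(D^r\phi)e_{r-1}+\tfrac1{q_0}\bigl(\sum_i q_{r-i}D^i\phi\bigr)e_{r-1}$; reindexing recognises the bracket as $q_0^{-1}(L\phi)e_{r-1}=(L\phi)m^\vee$. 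Running the very same computation for the connection $M^\vee$, which is cyclic on $m^\vee$ with annihilator $L^\vee$ (Lemma~\ref{dual-via-adjoint-L-lemma}) and dual $M^{\vee\vee}=M$ with dual basis $\{\rho_i\}$, produces $D\,\xi(\psi)=(L^\vee\psi)\,n$ with $n$ the analogue of $m^\vee$, namely $n=(\text{leading coefficient of }L^\vee)^{-1}\rho_{r-1}$. Since that leading coefficient is $(-1)^rq_0$ and $\rho_{r-1}=(-1)^{r-1}q_0\,m$ by Lemma~\ref{m-vee-rho-lemma}, one finds $n=-m$, so $D\,\xi(\psi)=-(L^\vee\psi)\,m$.

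With these two identities I would invoke the compatibility~\eqref{dual-connection} of the duality pairing with the connection, applied to $\{\psi,\phi\}=\langle\xi(\psi),\eta(\phi)\rangle$:
\[
D\{\psi,\phi\}=\langle D\xi(\psi),\eta(\phi)\rangle+\langle\xi(\psi),D\eta(\phi)\rangle=-(L^\vee\psi)\langle m,\eta(\phi)\rangle+(L\phi)\langle\xi(\psi),m^\vee\rangle .
\]
Because $\{e_i\}$ and $\{\rho_i\}$ are the respective dual bases, $\langle m,\eta(\phi)\rangle=\sum_i(D^i\phi)\langle m,e_i\rangle=\phi$ and $\langle\xi(\psi),m^\vee\rangle=\sum_i(D^i\psi)\langle\rho_i,m^\vee\rangle=\psi$, whence $D\{\psi,\phi\}=\psi(L\phi)-(L^\vee\psi)\phi$, which is~\eqref{int-by-parts1}.

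I expect the main obstacle to be pinning down the sign and normalization in $D\,\xi(\psi)=-(L^\vee\psi)m$, since this requires $(L^\vee)^\vee=L$, the leading coefficient $(-1)^rq_0$ of $L^\vee$, and $\rho_{r-1}=(-1)^{r-1}q_0m$ to combine correctly. To avoid this bookkeeping one can derive~\eqref{int-by-parts1} directly from~\eqref{bracket1}: setting $T_{h,\nu,i}:=(-1)^h(D^h(q_\nu\psi))(D^i\phi)$, the Leibniz rule gives $D\{\psi,\phi\}=\sum_{h+\nu+i=r-1}(T_{h,\nu,i+1}-T_{h+1,\nu,i})$, which telescopes for each fixed $\nu$ to $T_{0,\nu,r-\nu}-T_{r-\nu,\nu,0}$; summing over $0\le\nu\le r-1$ and observing that the absent $\nu=r$ contributions cancel between $\psi(L\phi)$ and $(L^\vee\psi)\phi$ reproduces $\psi(L\phi)-(L^\vee\psi)\phi$.
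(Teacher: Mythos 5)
Your proof is correct and follows essentially the same route as the paper's: the same telescoping computation via \eqref{dual-conn} giving $D\eta(\phi)=(L\phi)\,m^\vee$ and $D\xi(\psi)=-(L^\vee\psi)\,m$ (with the identical sign bookkeeping through $\rho_{r-1}=(-1)^{r-1}q_0 m$ and the leading coefficient $(-1)^r q_0$ of $L^\vee$), followed by the compatibility \eqref{dual-connection}, and the same coefficient-matching against \eqref{pairing-e-rho} of Lemma~\ref{m-vee-rho-lemma} to establish \eqref{bracket1}. The one genuine addition is your closing remark deriving \eqref{int-by-parts1} directly from \eqref{bracket1} by the telescoping of $T_{h,\nu,i+1}-T_{h+1,\nu,i}$, which is also correct and gives a self-contained elementary alternative that sidesteps the $(m^\vee)^\vee=-m$ sign analysis entirely.
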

\begin{proof}To prove~\eqref{int-by-parts1}, we observe that 
\[\bal
D (\eta(\phi)) &= \sum_{i=0}^{r-1} \left(  (D^{i+1}\phi) e_i +  (D^i \phi) D e_i \right)  \\
&= \sum_{i=0}^{r-1} (D^{i+1}\phi) e_i +  \sum_{i=0}^{r-1}(D^i \phi)(q_{r-i} m^\vee - e_{i-1})  = (L \phi) m^\vee,
\eal\]
where we used the convention $e_{-1}=0$ in the summation in the last line. The same computation for $L^\vee$ would yield
\[
D (\xi(\psi)) = -(L^\vee \psi) m.
\]
Indeed, since $\rho_{r-1}=(-1)^{r-1} q_0 m$ (see Lemma~\ref{m-vee-rho-lemma}) and the leading coefficients of $L^\vee$ is $(-1)^r q_0$, we obtain $(m^\vee)^\vee = -m$. Finally, using~\eqref{dual-connection} we obtain
\[\bal
D \{ \psi, \phi \} &= D \langle \xi(\psi), \eta(\phi) \rangle = \langle D\xi(\psi), \eta(\phi) \rangle + \langle \xi(\psi), D\eta(\phi) \rangle \\
&= - (L^\vee \psi) \langle m, \eta(\phi) \rangle + (L \phi) \langle \xi(\psi), m^\vee \rangle = - (L^\vee \psi) \phi + (L \phi) \psi.
\eal\] 
Expression~\eqref{bracket1} follows from Lemma~\ref{m-vee-rho-lemma} by a straightforward computation: expanding \eqref{bracket1}, we see that the coefficient at $D^j(\psi) D^i(\phi)$ coincides with~\eqref{pairing-e-rho}. 
\end{proof}

\begin{proof}[Proof of Theorem~\ref{gamma-apery-thm}] Fix a local exponent $\rho \in \sR$. We will work with formal power series in $(s-\rho)$ whose coefficients are analytic functions in a neighbourhood of a regular base point $p$ chosen on the path $\gamma$.  By the construction of the Frobenius solutions in Section~\ref{sec:Apery} it follows that the formal series $\Phi = \sum_{n \ge 0} \phi_{\rho,n}(t) (s-\rho)^n$ satisfies
\eq{Frob-de}{
L \Phi = I(s) t^s,
}
\eq{Frob-around-0}{
\sigma_0 \Phi = e^{2 \pi i s} \Phi
}
and
\eq{Frob-around-c}{
(\sigma_c-1) \Phi = \kappa(s) \, \delta(t),
}
where $\kappa(s)$ denotes the formal series $\sum_{n \ge 0} \kappa_{\rho,n} (s-\rho)^n$ and $I(s) t^s$  in~\eqref{Frob-de} and $e^{2 \pi i s}$ in~\eqref{Frob-around-0} respectively have to be expanded into power series at $s=\rho$. 

For any class $\xi \in H_1(V_\gamma \,, Sol(L^\vee) \otimes t^s)$ represented by a 1-cycle $\sum_j \sigma_j \otimes \psi_j \otimes e^{2 \pi i s n_j}$ we compute the respective gamma function using~\eqref{Frob-de} as follows:
\eq{int-by-parts-1}{\bal
I(s) \Gamma_\xi(s) &= \sum_j e^{2 \pi i s n_j} I(s) \int_{\sigma_j} t^s \psi_j(t) \frac{dt}{t} = \sum_j e^{2 \pi i s n_j} \int_{\sigma_j} (L \Phi)(t) \psi_j(t) \frac{dt}{t}  \\
& = \sum_j e^{2 \pi i s n_j} \int_{\sigma_j} d \{ \psi_j, \Phi \}  = \sum_j e^{2 \pi i s n_j} (\sigma_j-1) \{ \psi_j, \Phi \}.   
\eal}
Here to pass to the second line we used the fact that $L^\vee \psi_j = 0$ and hence $(L \Phi)(t) \psi_j(t) = (L \Phi)(t) \psi_j(t) - \Phi(t) (L^\vee \psi_j)(t) = D \{ \psi_j, \Phi \}$ due to property~\eqref{int-by-parts} of the bracket $\{ *, *\}$. As above, all functions of $s$ in our formulas mean the respective expansions into power series at $s=\rho$.  

According to the statement of Theorem~\ref{gamma-apery-thm}, $R(\sigma_0)=\sum_m \lambda_m \sigma_0^m \in K[\sigma_0^{\pm 1}]$ generates the annihilator of $\delta^\vee$ in this ring. By Lemma \ref{annlem}, the module of gamma functions~\eqref{gamma-module-in-main-thm} is generated by $\Gamma_{\xi_R}(s)$ where $\xi_R$ is the class of the 1-cycle $\sum_m \lambda_m \sigma_0^m \otimes \delta^\vee \otimes e^{-2 \pi i s m}  + \sigma_c \otimes \ve^\vee \otimes R(e^{- 2 \pi i s})$, where $\ve^\vee$ is any solution of $L^\vee$ satisfying $(\sigma_c-1) \ve^\vee = \delta^\vee$. We assume that $\delta$, $\delta^\vee$ and $\ve^\vee$ are $K$-rational solutions. Applying~\eqref{int-by-parts-1} to the class $\xi_R$, we compute further using~\eqref{Frob-around-0} and~\eqref{Frob-around-c}:
\[\bal
I(s) \Gamma_{\xi_R}(s) &= \sum_m \lambda_m e^{-2 \pi i s m} (\sigma_0^{m}-1) \{\delta^\vee, \Phi \} + R(e^{-2 \pi i s}) (\sigma_c-1) \{\ve^\vee, \Phi\}\\
&= \sum_m \lambda_m e^{-2 \pi i s m} \bigl( \{ \sigma_0^{m} \delta^\vee,  e^{2 \pi i s m} \Phi\} - \{\delta^\vee, \Phi \}  \bigr)\\
& \qquad\qquad + R(e^{-2 \pi i s})\bigl(\{\sigma_c \ve^\vee, \sigma_c \Phi \} - \{\ve^\vee, \Phi\}\bigr)\\
&= \{ R(\sigma_0) \delta^\vee, \Phi\} - R(e^{-2 \pi i s}) \{\delta^\vee, \Phi\} \\
& \qquad\qquad + R(e^{-2 \pi i s}) \bigl(\{ \ve^\vee + \delta^\vee , \Phi + \kappa(s) \delta\} - \{\ve^\vee, \Phi\}\bigr)\\
& = R(e^{-2 \pi i s}) \, \kappa(s) \, \{ \ve^\vee+\delta^\vee, \delta\}.
\eal\]
Since  $\delta$ and $\ve^\vee + \delta^\vee$ are $K$-rational, we have $\{ \ve^\vee+\delta^\vee, \delta\}\in K$. To show that this number is non-zero, notice that $\{ \psi, \delta \} = 0$ if and only if $\sigma_c \psi = \psi$. (This follows from the remark below formula \eqref{int-by-parts} that the bracket coincides with the duality pairing on solutions $\sM\times \sM^\vee$, so the orthogonal to the image of $\sigma_c-1$ on $\sM$ is the kernel of $\sigma_c-1$ on $\sM^\vee$.) One can easily check that 
\[
(\sigma_c-1)(\ve^\vee + \delta^\vee) = \delta^\vee + (\sigma_c-1) \delta^\vee = \sigma_c \delta^\vee \ne 0. 
\]
Note that the class $\xi_0 = \{ \ve^\vee+\delta^\vee, \delta\}^{-1} \xi_R \in H_1(V_\gamma \, , \, Sol(L^\vee)_K \otimes t^s )$ does not depend on our choice of $\rho \in \sR$. The above computation shows that $I(s) \Gamma_{\xi_0}(s) /R(e^{- 2 \pi i s}) = \kappa(s)$, which completes the proof.  
\end{proof}

\begin{cor}Let notation be as in the theorem, and let $\psi$ be a solution for $L^\vee$. Then the bracket $\{\psi,\Phi\}$ is a potential for the $\Gamma$-integrand $\psi(t)t^sdt/t$, i.e. $d\{\psi,\Phi\}=\psi t^sdt/t$. This is a special property of Mellin transforms for solutions of $\sD$-modules.
\end{cor}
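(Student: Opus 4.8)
The plan is to read the statement off directly from the adjunction identity~\eqref{int-by-parts} satisfied by the bracket, together with the defining equation~\eqref{Frob-de} for the Frobenius generating series $\Phi$. The only preliminary observation I need is that, since $D = t\,d/dt$, every analytic function $f$ satisfies $df = (Df)\,\frac{dt}{t}$; thus showing that $\{\psi,\Phi\}$ is a potential for the $\Gamma$-integrand reduces to computing the single quantity $D\{\psi,\Phi\}$.

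First I would apply~\eqref{int-by-parts}, which gives $D\{\psi,\Phi\} = \psi\,(L\Phi) - (L^\vee\psi)\,\Phi$. Because $\psi$ is a solution of $L^\vee$, the term $(L^\vee\psi)\,\Phi$ vanishes. Next, using~\eqref{Frob-de}, namely $L\Phi = I(s)\,t^s$, the surviving term becomes $I(s)\,\psi\,t^s$, so that $D\{\psi,\Phi\} = I(s)\,\psi\,t^s$ and therefore $d\{\psi,\Phi\} = I(s)\,\psi\,t^s\,\frac{dt}{t}$. Since $I(s)$ is constant in $t$, dividing $\Phi$ by the scalar $I(s)$ (equivalently, normalizing to the solution of $L\Phi = t^s$) yields exactly $d\{\psi,\Phi\} = \psi\,t^s\,\frac{dt}{t}$, which is precisely the integrand appearing in~\eqref{int-by-parts-1} inside the proof of Theorem~\ref{gamma-apery-thm}.

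The argument is essentially immediate once~\eqref{int-by-parts} is granted, so I do not expect a serious obstacle; the two points that require care are both bookkeeping. First, $\Phi$ is not a genuine function but a formal power series in $(s-\rho)$ whose coefficients are the Frobenius functions $\phi_{\rho,n}(t)$, analytic near the chosen base point, so the identity must be read coefficient-by-coefficient in $(s-\rho)$, using the convergence established in Section~\ref{sec:Apery}. Second, the computation uses~\eqref{int-by-parts} purely as a formal identity of differential operators acting on $\sO^{an}$, so no regular-singularity hypothesis enters — this is exactly the feature highlighted in the introduction that should let the method extend beyond the regular case. The closing remark is then explained: the Mellin integrand $\psi\,t^s\,\frac{dt}{t}$ admits a closed-form primitive, the bracket $\{\psi,\Phi\}$ assembled algebraically from the $\sD$-module data via~\eqref{bracket}, and it is this explicit potential that makes the integration-by-parts step~\eqref{int-by-parts-1} in the main theorem work.
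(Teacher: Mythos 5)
Your proposal is correct and is essentially the paper's own argument: the corollary is read off from the computation \eqref{int-by-parts-1} inside the proof of Theorem~\ref{gamma-apery-thm}, where \eqref{int-by-parts} together with $L^\vee\psi=0$ and $L\Phi=I(s)t^s$ gives $D\{\psi,\Phi\}=(L\Phi)\psi=I(s)\psi t^s$. You also correctly flag the factor $I(s)$ that the corollary's statement elides and dispose of it by the evident normalization (constant in $t$), which is the right reading of the paper's loose formulation.
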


\section{The limiting MHS and Frobenius constants}\label{sec:Frob-near-MUM}

We are given a Zariski open $U \subset \G_m \subset \P^1_{t}$ and a regular singular point connection $\nabla: M \to M\otimes \Omega^1_U$ defined over a subfield $K \subseteq \C$. We assume that $(M,\nabla)$ is a direct summand in the Gau\ss--Manin connection on $H^{w}_{DR}(X/U)$ for some projective, smooth $X \to U$. In this case $M$ carries a variation of pure Hodge structure of weight $w$. 

As earlier, we denote by $\sD$ be the ring of differential operators on $U$. This ring is generated over the ring of regular functions $\sO_U$ by the derivation $D=t\frac{d}{dt}$. We assume there is an element in the lowest Hodge submodule, $m \in F^{w}M$, and a differential operator 
\[
L = \sum_{j=0}^r q_j(t) D^{r-j} \in \sD, \quad q_0 \in \sO_U^\times
\]
such that $Lm=0$ and
\[
M\cong \sD/\sD L,
\]
the isomorphism being given by $m \mapsto 1$.

We further assume that $t=c$ is a reflection point for $M$ (Definition~\ref{c-assum}) and that $t=0$ is a point of maximally unipotent monodromy (\emph{MUM}). The latter condition means that in a certain basis of solutions of $M$ the operator of local monodromy around $t=0$ is given by  
$\sigma_0=\exp(N)$, 
\eq{N-matrix}{
N=\begin{pmatrix}0 & 1 & 0 & \ldots \\ 0 & 0 & 1 & \ldots \\ & \ldots \end{pmatrix}.
}
In this case $L$ has a unique local exponent $\rho \in \Z$ at $t=0$. By the construction given is Section~\ref{sec:Apery}, for every homotopy class of paths $\gamma$ joining $t=0$ and $t=c$ there is an infinite collection of Frobenius constants $\kappa_{n} := \kappa_{\rho,n}$, $n=0,1,\ldots$ (see Definition~\ref{full-apery-defn}; possibly dividing $L$ on the left by a power of $(t-c) \in \sO^\times_U$, we can assure that $t=c$ is a special reflection point for this differential operator).  In this section we will show that certain expressions in the Frobenius constants $\kappa_0,\ldots,\kappa_{r-1}$ and $2 \pi i$ are periods of the limiting mixed Hodge structure at $0 \in \P^1 \setminus U$.  In order to include the higher Frobenius constants $\kappa_r, \kappa_{r+1}, \ldots$ in the picture, we start by building a variation of mixed Hodge structure on extensions of $M$ by powers of the Kummer connection.

We are interested in the analytic structure of $M$ in a punctured disk $\Delta^*$ about $0\in \P^1 \setminus U$. The following is classic.
\begin{thm}\label{odp} The evident functor is an equivalence of categories between the category of analytic connections on $\Delta^*$ meromorphic at $0$ and having at worst regular singular points there and 
the category of all analytic connections on $\Delta^*$.\end{thm}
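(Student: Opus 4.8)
The plan is to compare both categories in the statement to the category $\mathrm{Rep}_\C(\Z)$ of finite-dimensional $\C$-vector spaces equipped with a single automorphism. Since $\pi_1(\Delta^*)\cong\Z$, this is the same as the category of local systems of finite rank on $\Delta^*$. Each of the two categories carries a \emph{monodromy functor} to $\mathrm{Rep}_\C(\Z)$: to a connection $(\sE,\nabla)$ one associates its sheaf of horizontal sections $\sE^\nabla$, a local system on $\Delta^*$, together with the action of the positive generator of $\pi_1(\Delta^*)$ on a stalk. Because the monodromy is computed entirely on $\Delta^*$, the evident forgetful functor $F$ from meromorphic regular singular connections to all analytic connections on $\Delta^*$ commutes strictly with the two monodromy functors, i.e. $S_2\circ F\cong S_1$. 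It therefore suffices to prove that each of $S_1$ and $S_2$ is an equivalence; then $F\cong S_2^{-1}\circ S_1$ is an equivalence as well.

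That $S_2$ is an equivalence is the Riemann--Hilbert correspondence on the open Riemann surface $\Delta^*$. For full faithfulness I would observe that $\mathrm{Hom}$ of two connections is again an analytic connection (the internal Hom), that a horizontal homomorphism is exactly a horizontal section of it, and that on $\Delta^*$ horizontal sections biject with monodromy-invariant vectors in the stalk, which is precisely the space of morphisms of the associated representations. For essential surjectivity, given $(V,T)$ one pulls back to the universal cover $\widetilde{\Delta^*}\cong\H$ (a half-plane, hence simply connected), where every local system is constant; the trivial connection there descends, via the $\Z$-equivariance encoded by $T$, to an analytic connection on $\Delta^*$ with monodromy $T$. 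Equivalently, a quasi-inverse is $\mathcal{L}\mapsto(\mathcal{L}\otimes_\C\sO_{\Delta^*},\,1\otimes d)$.

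For $S_1$ I would invoke Deligne's regular singular Riemann--Hilbert correspondence on the disk. Essential surjectivity is supplied by the \emph{canonical (Deligne) extension}: choosing a matrix $\Theta$ with $\exp(2\pi i\,\Theta)=T$ whose eigenvalues are normalized to lie in a fixed fundamental strip, the connection $\nabla=d+\Theta\,\frac{dt}{t}$ on the trivial bundle $\sO_\Delta^{\oplus n}$ is meromorphic at $0$ with a logarithmic --- hence regular --- singular point, and its monodromy is $T$. Full faithfulness is the delicate part: a morphism of the restricted local systems is a single-valued horizontal section of the internal-Hom connection, which is again regular singular; the defining moderate-growth property of regular singular connections forces such a section to grow at worst polynomially as $t\to 0$, hence to extend meromorphically across the puncture, i.e. to a morphism of meromorphic connections. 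This is exactly where the hypothesis ``at worst regular singular points'' is used.

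The main obstacle I expect is precisely this last point in the analysis of $S_1$: controlling the growth of flat sections and promoting a single-valued flat section on $\Delta^*$ to a meromorphic one across $0$. This is the heart of the theorem and is what distinguishes the regular from the irregular case --- for an irregular connection the meromorphic extension is not determined by the monodromy alone, and $S_1$ ceases to be an equivalence. Once full faithfulness and essential surjectivity of both $S_1$ and $S_2$ are established, the factorization $F\cong S_2^{-1}\circ S_1$ immediately yields the claim.
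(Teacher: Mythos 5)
Your proof is correct and is essentially the argument of record: the paper offers no proof of Theorem~\ref{odp} beyond citing Malgrange \cite{M} (th\'eor\`eme (1.1), p.~24), and the proof found there (equivalently, Deligne's regular singular Riemann--Hilbert correspondence on the disk) follows exactly your route --- reduce both categories to representations of $\pi_1(\Delta^*)\cong\Z$ via horizontal sections, obtain essential surjectivity from the canonical extension $d+\Theta\,\frac{dt}{t}$ with normalized eigenvalues, and obtain full faithfulness from the moderate-growth property, which forces a single-valued flat section of the internal Hom to extend meromorphically across $0$. The only cosmetic caveat is a sign convention: the monodromy of $d+\Theta\,\frac{dt}{t}$ acting on flat sections $t^{-\Theta}v$ is $\exp(-2\pi i\,\Theta)$, so one should normalize $\exp(-2\pi i\,\Theta)=T$ (or flip a sign in the connection); this does not affect the argument.
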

\begin{proof}See for example \cite{M}, th\'eor\`eme (1.1), p. 24. 
\end{proof}
Of course, the category of analytic connections is equivalent to the category of local systems, so, for example, $M_{\Delta^*}$ is determined by its monodromy at $0$ which is $\sigma_0=\exp(N)$ with $N$ as in \eqref{N-matrix}. We write $\sO$ for the ring of analytic functions on $\Delta^*$ which are meromorphic at $0$. For an integer $n\ge 0$, define a free $\sO$-module with basis $\{e_i\}$
$$\E_n := \sO e_{-n-1}\oplus \sO e_{-n}\oplus \cdots \sO e_{0}\oplus \sO e_{1}\oplus \cdots \sO e_{r-1}.
$$
We define a connection on $\E_n$
$$\nabla e_i := e_{i-1}\frac{dt}{ t};\quad \nabla(e_{-n-1})=0. 
$$
Let $\F_n \subset \E_n$ be the submodule spanned by $e_{-1},\dotsc,e_{-n-1}$. The connection restricts to a connection on $\F_n$, so there is an induced connection on $\E_n/\F_n$. (Note $\F_0=\sO e_{-1}$ with trivial connection.) 

\begin{defn} The Kummer connection $K_t = \sO\ve \oplus \sO\eta$ is the rank $2$ connection  with $\nabla(\ve) = \eta \frac{ dt}{ t}$ and $\nabla(\eta)=0$.  
\end{defn}
\begin{lem}We have an isomorphism of connections
$$\F_n \cong \text{Sym}^n K_t. 
$$
(Here $\text{Sym}^0K_t = \sO$ with the trivial connection.) 
\end{lem}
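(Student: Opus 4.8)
The plan is to exhibit an explicit horizontal isomorphism, exploiting the fact that both sides are free $\sO$-modules of rank $n+1$. Thus it suffices to send a basis of $\F_n$ to scalar multiples of a basis of $\text{Sym}^n K_t$, check that the map is horizontal, and verify that the scalars are units in $\sO$ (indeed nonzero constants). First I would compute the connection on $\text{Sym}^n K_t$ from the Leibniz rule. Writing the standard monomial basis $\ve^a \eta^b$ with $a+b=n$, and using $\nabla(\ve)=\eta\,\frac{dt}{t}$, $\nabla(\eta)=0$, I get
\[
\nabla(\ve^a\eta^b) \;=\; a\,\ve^{a-1}\eta^{b+1}\,\frac{dt}{t}.
\]
In particular $\eta^n$ is the (up to scalar unique) flat vector, matching the flat vector $e_{-n-1}$ of $\F_n$.

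Next I would set up the comparison after reindexing. On $\F_n$, put $f_k := e_{-1-k}$ for $0\le k\le n$, so that $\nabla f_k = f_{k+1}\,\frac{dt}{t}$ for $k<n$ and $\nabla f_n = 0$. The key observation is that the two connections shift the grading in opposite directions (the index of $f_k$ increases under $\nabla$, while the exponent $a$ of $\ve^a\eta^{n-a}$ decreases), so the matching must reverse the order of the basis. I therefore define $\varphi\colon \F_n \to \text{Sym}^n K_t$ by
\[
\varphi(f_k) \;=\; c_k\,\ve^{\,n-k}\eta^{\,k}, \qquad 0\le k\le n,
\]
for constants $c_k\in\C^\times$ to be pinned down by horizontality.

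Imposing $\varphi(\nabla f_k)=\nabla\varphi(f_k)$ gives, using the Leibniz formula above, the one-step recursion $c_{k+1}=(n-k)\,c_k$; the terminal case $\nabla f_n=0$ is automatically consistent since $\nabla(\ve^0\eta^n)=0$. With the normalization $c_0=1$ this yields $c_k = n!/(n-k)!$, which is a nonzero constant for every $0\le k\le n$. Hence $\varphi$ carries a basis of the free module $\F_n$ to a basis of $\text{Sym}^n K_t$ and is $\sO$-linear, so it is an isomorphism of $\sO$-modules; being horizontal by construction, it is an isomorphism of connections. For the edge case $n=0$ both sides are $\sO$ with trivial connection, so the statement is immediate.

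I do not expect a serious obstacle here: the whole argument is a direct computation. The only point requiring care is bookkeeping — getting the direction of the reindexing right (so that the flat generator $e_{-n-1}$ corresponds to $\eta^n$ rather than to $\ve^n$) and checking that the scalar recursion $c_{k+1}=(n-k)c_k$ closes up consistently at both ends, in particular that it forces $c_k\ne 0$ throughout so that $\varphi$ is genuinely invertible.
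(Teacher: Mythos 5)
Your proof is correct and takes essentially the same approach as the paper: the paper's proof simply exhibits the explicit horizontal identification $e_{-1}=\ve^n/n!,\ e_{-2}=\ve^{n-1}\eta/(n-1)!,\dotsc,e_{-n-1}=\eta^n$, i.e. $e_{-1-k}\mapsto \ve^{n-k}\eta^k/(n-k)!$, which is exactly your map $\varphi$ up to the overall nonzero constant $n!$ coming from your normalization $c_0=1$. Your derivation of the recursion $c_{k+1}=(n-k)c_k$ just makes explicit the horizontality check that the paper leaves to the reader.
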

\begin{proof}The identification is given by
$$e_{-1}=\ve^n/n!,\ e_{-2} = \ve^{n-1}\eta/(n-1)!,\dotsc,e_{-n-1}=\eta^n.
$$
\end{proof}

The dual connection on the free $\sO$-module $\E_n^\vee$  with basis $e_i^\vee,\ -n-1\le i\le r-1$ is given by
$$\nabla^\vee e_i^\vee = -e^\vee_{i+1}\frac{dt}{t}.
$$
Consider horizontal sections of $\E^\vee_n$, (that is, solutions for $\E_n$) given by
\eq{horsol-eps}{\ve_k(t) = \frac{\log(t)^k}{k!}e^\vee_{r-1}+\frac{\log(t)^{k-1}}{(k-1)!}e^\vee_{r-2}+\cdots + \log(t)e^\vee_{r-k}+e^\vee_{r-k-1}
}
for $k=0,\ldots,n+r$. Note that in the basis $\rho_k := (2 \pi i)^{-k} \ve_k$ operator $N = \log(\sigma_0)$ acts by the nilpotent matrix~\eqref{N-matrix} of size $n+r+1$.  

\begin{lem} Continuing to assume $0$ is a MUM point, we get an exact sequence of meromorphic connections with regular singular points at the origin
$$0 \to \text{Sym}^n K_t \to \E_n \to M_{\Delta^*} \to 0.
$$
\end{lem}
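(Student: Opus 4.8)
The plan is to realize the sequence as the tautological one attached to the subconnection $\F_n \subset \E_n$, and then to identify the quotient with $M_{\Delta^*}$ through the equivalence of Theorem~\ref{odp}. First I would observe that the connection $\nabla e_i = e_{i-1}\frac{dt}{t}$ preserves the $\sO$-submodule $\F_n$ spanned by $e_{-1},\dots,e_{-n-1}$ (indeed $\nabla e_{-1}=e_{-2}\frac{dt}{t}$ and $\nabla e_{-n-1}=0$), so the quotient module $\E_n/\F_n$ acquires an induced connection and
\[
0 \to \F_n \to \E_n \to \E_n/\F_n \to 0
\]
is an exact sequence of meromorphic connections. By the previous lemma $\F_n \cong \text{Sym}^n K_t$, so the entire statement reduces to producing an isomorphism $\E_n/\F_n \cong M_{\Delta^*}$.

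Both connections are regular singular at the origin: the connection matrix of $\E_n$ in the basis $\{e_i\}$ equals $\frac{dt}{t}$ times the constant nilpotent shift $e_i \mapsto e_{i-1}$, so it has a logarithmic pole with nilpotent residue, a property inherited by $\E_n/\F_n$; and $M_{\Delta^*}$ is regular singular by hypothesis. By Theorem~\ref{odp} it is therefore enough to check that the two underlying local systems have isomorphic monodromy around $0$.

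The key computation is the monodromy of $\E_n/\F_n$. On the quotient the induced connection sends $\bar e_i \mapsto \bar e_{i-1}\frac{dt}{t}$ with $\bar e_{-1}=0$, so in the basis $\bar e_0,\dots,\bar e_{r-1}$ the residue is a single nilpotent Jordan block of size $r$ and the monodromy $\sigma_0$ is maximally unipotent of rank $r$. Equivalently, the solutions of $\E_n/\F_n$ are exactly $\ve_0,\dots,\ve_{r-1}$ from \eqref{horsol-eps} (those $\ve_k$ that do not involve $e^\vee_i$ with $i<0$, i.e. that annihilate $\F_n$), on which $N=\log\sigma_0$ acts by a single nilpotent Jordan block \eqref{N-matrix} of size $r$ in the basis $\rho_k=(2\pi i)^{-k}\ve_k$. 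By the MUM hypothesis $M_{\Delta^*}$ has $\sigma_0=\exp(N)$ with $N$ a single Jordan block of size $r=\rank(M)$; since the unique local exponent $\rho$ is an integer, $e^{2\pi i\rho}=1$ and this monodromy is unipotent. Any two unipotent automorphisms of $\C^r$ each consisting of a single Jordan block are conjugate, so the local systems are isomorphic, and Theorem~\ref{odp} upgrades this to an isomorphism $\E_n/\F_n\cong M_{\Delta^*}$ of meromorphic connections; composing the quotient map with it gives the second arrow.

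I expect the only real subtlety to be the bookkeeping in this last comparison: one must confirm that the relevant Jordan block has size exactly $r=\rank(M)$, rather than the size $n+r+1$ block governing all of $\E_n$, and that it is precisely the integrality $\rho\in\Z$ which supplies the unipotence needed to match the MUM normal form. Granting these points, no explicit gauge transformation need be written down, since Deligne's equivalence produces the isomorphism abstractly from the coincidence of the two monodromies.
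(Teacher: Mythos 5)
Your proposal is correct and follows essentially the same route as the paper: the paper's (one-line) proof likewise observes that the MUM assumption forces the monodromy of $\E_n/\F_n$ to coincide with that of $M_{\Delta^*}$ and then invokes Theorem~\ref{odp} to upgrade the isomorphism of local systems to an isomorphism of regular singular meromorphic connections. Your write-up simply makes explicit the bookkeeping (the size-$r$ Jordan block on the quotient, identified via the solutions $\ve_0,\dots,\ve_{r-1}$) that the paper leaves implicit.
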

\begin{proof} The assumption guarantees that the monodromy for $\E_n/\F_n$ coincides with that of $M_{\Delta^*}$, so one can apply Theorem \ref{odp}. 
\end{proof}

We now define a variation of Hodge structure on $\E_n$. Recall that $M$ is a pure variation of Hodge structure on $U$, with Hodge filtration $0\subset F^{w}M\subset F^{w-1}M\subset\cdots \subset F^0M=M$.  We are given an element $m\in F^{w}M$ satisfying a differential equation $Lm=0$ and such that $M \cong \sD/\sD L$. Without loss of generality, we assume from now on that $q_0(0) \ne 0$ and the unique local exponent of $L$ at $t=0$ is 
\[
\rho=0.
\]
Under our assumption on $\sigma_0$, the latter condition can be always achieved by multiplying $m$ by a power of $t \in \sO_U^\times$. In this case the monodromy of solutions of the differential operator $D^{n+1} L$ around $t=0$ is again maximally unipotent (this follows from the existence of higher Frobenius solutions, \eqref{Frob-functions}, \eqref{DL}) and hence it is the same as the monodromy of the local system of horizontal sections $\sE_n^\vee = \E_n^{\vee,\nabla^\vee=0}$.  We can again apply Theorem \ref{odp} and build a commutative diagram of connections \minCDarrowwidth.1cm
\eq{connect}{\begin{CD}0 @>>> (\sD/\sD D^{n+1})_{\Delta^*} @>>R_L >  (\sD/\sD D^{n+1}L)_{\Delta^*} @>>> (\sD/\sD L)_{\Delta^*} @>>> 0 \\
@. @VV \cong V   @VV \cong V    @VV \cong V    \\
0 @>>> \text{Sym}^n K_t @>>> \E_n @>>> M_{\Delta^*} @>>> 0.
\end{CD}
}
(Here $R_L$ means right multiplication by $L$.) The Frobenius solutions $\phi_k := \phi_{0,k}$ for $D^{n+1}L$ are given by
\[
\phi_k(t) = \sum_{j=0}^k  \frac{\log(t)^j}{j!} \phi_{k-j}^{an}(t), \quad 0 \le k \le n+r,
\]
where $\phi^{an}_j \in \sO$ are uniquely defined analytic functions (see~\eqref{Frob-functions}) satisfying the condition
\eq{normalization}{\phi^{an}_0(0)=1;\quad \phi^{an}_j(0)=0,\ j>0.} 
These solutions satisfy
\eq{Lphi}{\bal
& L(\phi_0)=\cdots = L(\phi_{r-1})=0,\\
& L\phi_{r+j} = \frac{\log(t)^{j}}{j!};\quad 0\le j\le n.
\eal}
Consider the element 
\eq{phi}{ \phi^{an} := \sum_{j=0}^{n+r} \phi^{an}_j(t) e_{r-1-j} \in \E_n.
}
Applying the solutions $\ve_k$ defined in~\eqref{horsol-eps}, we observe that 
\eq{Frob-basis-via-phi-an}{
\phi_k = \langle \phi^{an}, \ve_k\rangle
}
for each $0 \le k \le n+r$. It then follows that $\phi^{an} \in \E_n$ satisfies $D^{n+1}L \phi^{an} = 0$. In particular, one can assume that the vertical isomorphism in the diagram~\eqref{connect} is given by $1 \in \sD \mapsto \phi^{an} \in \E_n$ and that in the bottom row  $\phi^{an}$ maps to the element $m \in M_{\Delta^*} \cong \E_n/\text{Sym}^n K_t$.

We can use $\phi^{an}$ to define a variation of mixed Hodge structure on $\E_n$ as follows. Since $m, Dm, D^2m,\dotsc,D^{r-1}m$ form an $\sO$-basis for $M_{\Delta^*}$, mapping $D^im \mapsto D^i\phi^{an}$ defines an $\sO$-module splitting $s: M_{\Delta^*} \to \E_n$, i.e. 
\eq{split}{\E_n \cong \text{Sym}^nK_t \oplus s(M_{\Delta^*}).
} 
One knows that $K_t$ is the connection underlying a variation of mixed Tate Hodge structure (Kummer variation). Its $n$th symmetric power is the variation on $\text{Sym}^n K_t$ where the Hodge filtration is given by
\[
F^0 \subset F^{-1} \subset \ldots \subset F^{-n}=\text{Sym}^n K_t, \quad F^{-k} = \oplus_{j=0}^{k} \sO \ve^{n-j}\eta^j
\]
and the weight filtration is given by
\[\bal
&W_{-2n} = W_{1-2n} \subset W_{2-2n} = W_{3-2n} \subset \ldots \subset W_0=\text{Sym}^n K_t, \\
&W_{-2k} = \oplus_{j=k}^{n} \sO \ve^{n-j}\eta^j.
\eal\]
The Hodge filtration is opposite to the weight filtration in the sense that $F^{-k}\oplus W_{-2(k+1)}=\text{Sym}^n K_t$, from which one can easily compute that $gr^W_{-2k} \text{Sym}^n K_t \cong \Q(k)$. The corresponding weight-graded Hodge variation is thus given by $\Q(n)\oplus  \ldots \oplus \Q(1) \oplus \Q(0)$. 
For our purposes we will need the twist $(\text{Sym}^n K_t)(1)$, which is the variation on the same connection where the respective filtrations are shifted as $F^{i}((\text{Sym}^n K_t)(1)) = F^{i+1}(\text{Sym}^n K_t)$ and $W_{i}(\text{Sym}^n K_t)(1)) = W_{i+2}(\text{Sym}^n K_t)$.

We use the splitting \eqref{split} to define a Hodge filtration on $\E_n$ as follows
\eq{Hodge-E-n}{
\bal 
&F^i\E_n=0\oplus s(F^i M_{\Delta^*}) ;\quad i\ge 0 \\
F^i\E_n &= F^i((\text{Sym}^nK_t)(1))\oplus M_{\Delta^*};\quad i<0.
\eal}

\begin{prop}\label{Griffiths-transversality} The Hodge filtration~\eqref{Hodge-E-n} on $\E_n$ satisfies Griffiths transversality: 
$$\nabla(F^a\E_n) \subset F^{a-1}\E_n\otimes \Omega^1_{\Delta^*}.
$$ 
\end{prop}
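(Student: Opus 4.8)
The plan is to reduce everything to the single derivation $D=t\,d/dt$. Since $\Omega^1_{\Delta^*}$ is free of rank one on $\frac{dt}{t}$ and $\nabla(\xi)=(D\xi)\otimes\frac{dt}{t}$ for any section $\xi$, Griffiths transversality is equivalent to the inclusions $D(F^a\E_n)\subseteq F^{a-1}\E_n$ for every integer $a$. I would rewrite the filtration \eqref{Hodge-E-n} in the uniform shape $F^a\E_n=F^a((\text{Sym}^nK_t)(1))\oplus s(F^aM_{\Delta^*})$, valid for all $a$ (for $a\ge 0$ the first summand is $0$, and for $a\le 0$ the second is $s(M_{\Delta^*})$), so that it suffices to treat the two summands separately. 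The first summand is immediate: from $\nabla e_i=e_{i-1}\frac{dt}{t}$ one has $De_{-1-j}=e_{-2-j}$ and $De_{-n-1}=0$, so $D(\sO e_{-1-j})\subseteq \sO e_{-1-j}+\sO e_{-2-j}$; as $F^a((\text{Sym}^nK_t)(1))$ is spanned by $e_{-1},\dots,e_{a}$, this gives $D(F^a((\text{Sym}^nK_t)(1)))\subseteq F^{a-1}((\text{Sym}^nK_t)(1))\subseteq F^{a-1}\E_n$. This is just Griffiths transversality for the twisted Kummer variation.

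For the second summand the point is that $s$ is only an $\sO$-module splitting, not a splitting of connections. I would introduce the second fundamental form $\theta(u):=D(s(u))-s(Du)$; projecting to $M_{\Delta^*}$ along the surjection $\E_n\to M_{\Delta^*}$ shows $\theta$ takes values in $\text{Sym}^nK_t$, and a one-line check shows $\theta$ is $\sO$-linear. Hence $D(s(F^aM))=s(D\,F^aM)+\theta(F^aM)$, where the first term lies in $s(F^{a-1}M)\subseteq F^{a-1}\E_n$ by Griffiths transversality on the pure variation $M$. It then remains to control $\theta(F^aM)$. Using $s(D^im)=D^i\phi^{an}$ for $0\le i\le r-1$ together with the relation $q_0\,D^rm=-(q_1D^{r-1}+\dots+q_r)m$ coming from $Lm=0$, one finds $\theta(D^im)=0$ for $i<r-1$ and $\theta(D^{r-1}m)=q_0^{-1}\,L\phi^{an}$, where $L\phi^{an}\in\text{Sym}^nK_t$.

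The heart of the matter is to locate $L\phi^{an}$ in the Hodge filtration. I would pair it against the horizontal sections $\varepsilon_k$ of \eqref{horsol-eps}: by \eqref{Frob-basis-via-phi-an} and \eqref{Lphi} we have $\langle L\phi^{an},\varepsilon_k\rangle=L\phi_k$, which vanishes for $k<r$ and equals $\frac{\log(t)^j}{j!}$ for $k=r+j$. Solving the resulting triangular linear system forces $L\phi^{an}=e_{-1}$, so that $\theta(M_{\Delta^*})=\sO e_{-1}=F^{-1}((\text{Sym}^nK_t)(1))$, the top Hodge piece of the twisted symmetric power. Finally, because $0$ is a MUM point the Hodge numbers of $M$ are all equal to $1$ and $w=r-1$, so Griffiths transversality on $M$ gives $F^pM=\langle m,Dm,\dots,D^{r-1-p}m\rangle_\sO$; in particular $D^{r-1}m\in F^aM$ if and only if $a\le 0$. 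Therefore $\theta(F^aM)$ is nonzero only when $a\le 0$, and in that range $\theta(F^aM)=\sO e_{-1}=F^{-1}((\text{Sym}^nK_t)(1))\subseteq F^{a-1}((\text{Sym}^nK_t)(1))\subseteq F^{a-1}\E_n$, since $a-1\le -1$. Combining the two summands yields $D(F^a\E_n)\subseteq F^{a-1}\E_n$ for all $a$.

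I expect the main obstacle to be the degree bookkeeping in this last step, namely verifying that $L\phi^{an}$ is exactly $e_{-1}$ and that its position $F^{-1}((\text{Sym}^nK_t)(1))$ is compatible with the first appearance of $D^{r-1}m$ at $F^0M=M$. This compatibility $\bigl(a\le 0 \Rightarrow a-1\le -1\bigr)$ is precisely what the Tate twist $(1)$ in \eqref{Hodge-E-n} is designed to arrange; without it the second fundamental form would violate transversality by one unit.
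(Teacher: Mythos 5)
Your proof is correct and follows essentially the same route as the paper's: the crux in both is the identity $L\phi^{an}=e_{-1}$, forced by pairing against the horizontal sections $\ve_k$ of~\eqref{horsol-eps} and using~\eqref{Lphi}, combined with the description~\eqref{Hodge-F-on-M} of $F^*M$ and Griffiths transversality of the twisted Kummer variation. Your packaging of the remaining bookkeeping via the second fundamental form $\theta$ is a (slightly cleaner) reformulation of the paper's direct check that $D^r\phi^{an}\in F^{-1}\E_n$, not a genuinely different argument.
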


The following basic consequence of the maximal unipotency of the local monodromy of $M$ at $t=0$ will be useful in our proof of Proposition~\ref{Griffiths-transversality}:

\begin{lem}\label{mumhodge} Suppose $M$ is a connection on $U \subset \G_m$ carriying a polarized variation of pure HS and such that the monodromy of the local system of its flat sections around $t=0$ is maximally unipotent. Let $p\in \Delta^* \subset U$ and consider the pure Hodge structure on the fiber $M_p$. Then for some $a<b$ the Hodge graded  is given by $$gr_FM_p = \bigoplus_{a\le i\le b} F^iM_p/F^{i+1}M_p$$  with each $F^iM_p/F^{i+1}M_p \cong \C$.  
\end{lem}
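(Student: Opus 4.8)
The plan is to deduce the statement from Schmid's theory of the limiting mixed Hodge structure, translating ``maximal unipotency'' into a statement about Hodge types via the monodromy weight filtration. First I would pass to the Deligne canonical extension $\widetilde M$ of $M$ across $0$ and invoke Schmid's nilpotent orbit theorem: the limit Hodge filtration $F^\bullet_\infty$ on the fiber $M_\infty := \widetilde M_0$, together with the monodromy weight filtration $W_\bullet$ of $N=\log\sigma_0$ centered at the weight $w$, forms a mixed Hodge structure on $M_\infty$; here $N$ acts as a morphism of type $(-1,-1)$, and by Cattani--Kaplan--Schmid each graded piece $gr^W_{w+k}M_\infty$ carries a pure Hodge structure of weight $w+k$. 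The first reduction is that the Hodge numbers may be computed equally well on a nearby fiber or in the limit: the Hodge bundles $\sF^i$ are holomorphic subbundles of constant rank, so $\dim F^iM_p=\dim F^iM_\infty$ and hence $\dim(F^iM_p/F^{i+1}M_p)=\dim(gr_F^i M_\infty)$ for every $i$. It therefore suffices to show that every $gr_F^iM_\infty$ is at most one-dimensional and that the nonzero ones occur for a contiguous range of consecutive indices.

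Next I would feed in maximal unipotency. By definition this means $\sigma_0$ is unipotent with $N$ a single Jordan block of size $r=\rank M$ (the shape \eqref{N-matrix}), so $N^{r-1}\neq 0=N^r$. For a single Jordan block the monodromy weight filtration has one-dimensional graded pieces, nonzero exactly in weights $w+r-1,\,w+r-3,\,\dots,\,w-r+1$, and $N$ induces an isomorphism $gr^W_{w+k}M_\infty \xrightarrow{\sim} gr^W_{w+k-2}M_\infty$ between consecutive nonzero pieces. This is pure linear algebra over the associated graded, forced by the defining property $N^k\colon gr^W_{w+k}\xrightarrow{\sim} gr^W_{w-k}$ of the monodromy weight filtration.

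The heart of the argument is then to propagate Hodge types down the Jordan block. The top piece $gr^W_{w+r-1}M_\infty$ is one-dimensional and pure, hence of a single Hodge type $(p,q)$ with $p+q=w+r-1$. Because $N$ is a morphism of mixed Hodge structures of type $(-1,-1)$ and restricts to an isomorphism on each consecutive graded, the one-dimensional space $gr^W_{w+r-1-2k}M_\infty$ must be of type $(p-k,q-k)$ for $0\le k\le r-1$. Consequently the $r$ Hodge types occurring in $M_\infty$ are $(p,q),(p-1,q-1),\dots,(p-r+1,q-r+1)$, each with multiplicity one, and in particular their first coordinates $p,p-1,\dots,p-r+1$ are pairwise distinct. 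This gives $\dim(gr_F^iM_\infty)=1$ for $p-r+1\le i\le p$ and $0$ otherwise; transporting back to $M_p$ via the first reduction yields the assertion with $a=p-r+1$ and $b=p$.

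I expect the only real obstacle to be bookkeeping around the Hodge-theoretic inputs rather than any genuinely hard step: one must cite Schmid (existence of the limiting mixed Hodge structure, the $(-1,-1)$-morphism property of $N$, and constancy of the limit Hodge numbers) and Cattani--Kaplan--Schmid (purity and polarizability of the weight-graded pieces) correctly, and verify the elementary claim that a single Jordan block produces one-dimensional weight-graded pieces linked by $N$-isomorphisms. Once these are in place, the propagation of Hodge types in the third paragraph is immediate and is exactly what forces all the Hodge numbers to equal one.
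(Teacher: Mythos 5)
Your proof is correct and takes essentially the same approach as the paper: reduce to the limiting mixed Hodge structure via Schmid, use the single Jordan block of $N=\log\sigma_0$ to see that the weight-graded pieces are one-dimensional and linked by $N$, and propagate Hodge types down the block using the $(-1,-1)$ (Tate-twist) property of $N$. The only cosmetic difference is that the paper identifies each rank-one graded piece as a Tate structure $\Q(k)$, whereas your propagation of a general type $(p-k,q-k)$ sidesteps that identification; both yield the same conclusion.
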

\begin{proof}The point of Schmid's construction of the limiting mixed Hodge structure for $M$ at $0\in \Delta$ is that the Hodge filtration passes to a limit as $p \to 0$, so it suffices to prove the assertion for the Hodge graded of the limiting mixed Hodge structure $M_{lim}$. But $M_{lim}$ has a logarithm of monodromy operator $N: M_{lim} \to M_{lim}(-1)$. Let the variation of pure HS on $M$ be of weight $w$. Then the monodromy weight filtration $W_*M_{lim}$ is given by the shift $W_* = L_{*-w}$ of the Jacobson filtration $L_*M_{lim}$. In our case $N=\log(\sigma_0)$ has one Jordan block (see~\eqref{N-matrix}) and therefore the Jacobson filtration has a very simple form
\ml{Jacobson-on-Mlim}{M_{lim}=L_rM_{lim}=L_{r-1}M_{lim};\quad L_{r-2}M_{lim}=L_{r-3}M_{lim}=NM_{lim} \\
\cdots L_{-r+1}M_{lim} = N^{r-1}M_{lim};\quad L_{-r}M_{lim}=(0).
}
Finally, $N^iM_{lim}/N^{i+1}M_{lim}$ is a one dimensional vector space. For $i\ge 0$, the map $N:gr^i_NM \to gr^{i+1}_NM(-1)$ is a surjective map of Hodge structures of dimensions $\le 1$. For a Hodge structure $H$ we have $F^i(H(-1))=F^{i-1}H$, so there are two possibilities. Either $gr^{i+1}_NM=(0)$, or $gr^{i+1}_NM=\Q(k)$ for some $k$, in which case $gr^i_NM=\Q(k-1)$. Since $\C(k)$ has Hodge filtration concentrated in degree $-k$, the lemma follows. 
\end{proof}

As a consequence of Lemma~\ref{mumhodge}, the Hodge filtration on our connection $M$ is $0=F^rM \subset F^{r-1}M\subset F^{r-2}M\subset\cdots\subset F^0M=M$ with $F^i/F^{i+1}$ rank $1$. Since we assumed that $m \in M$ is chosen in the smallest part of the Hodge filtration, it follows that the weight is given by $w=r-1$ and 
\eq{Hodge-F-on-M}{
F^k M = \sum_{j=0}^{r-1-k} \sO_U D^j m.
}

\begin{proof}[Proof of Proposition~\ref{Griffiths-transversality}] 
Recall the $\sO$-splitting $s: M_{\Delta^*} \inj \E_n$ is defined by $D^im\mapsto D^i\phi^{an},0\le i\le r-1$. It follows from~\eqref{Hodge-F-on-M} that $F^i \E_n = \sum_{j=0}^{r-1-i} \sO D^j \phi^{an}$ for $i \ge 0$, which immediately implies the statement of the proposition for all $a>0$. To deal with $a \le 0$ we will need the following observation. 

\begin{lem}We have $D^jL\phi^{an} = e_{-1-j},\ 0\le j\le n.$
\end{lem}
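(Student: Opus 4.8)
The plan is to prove the single identity $L\phi^{an}=e_{-1}$ and then obtain the general formula by differentiating. Recall that the connection on $\E_n$ satisfies $De_i=e_{i-1}$ for $i>-n-1$ and $De_{-n-1}=0$; hence, once $L\phi^{an}=e_{-1}$ is known, applying $D^j$ gives $D^jL\phi^{an}=e_{-1-j}$ for $0\le j\le n$, which is exactly the assertion. So everything reduces to the case $j=0$.

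To identify the section $L\phi^{an}\in\E_n$, I would pair it against the solutions $\ve_0,\dots,\ve_{n+r}$ of \eqref{horsol-eps}. These form a basis of the stalk of $\sE_n^\vee$: by \eqref{horsol-eps} the $\log$-free term of $\ve_k$ is $e^\vee_{r-1-k}$, and as $k$ runs from $0$ to $n+r$ these exhaust the dual basis, so the $\ve_k$ are linearly independent and there are $\dim\E_n^\vee$ of them. Consequently a section of $\E_n$ is determined by its pairings with all the $\ve_k$. Since each $\ve_k$ is horizontal, the Leibniz rule for the duality pairing gives $D\la u,\ve_k\ra=\la Du,\ve_k\ra$, and because $L$ is a polynomial in $D$ with coefficients in $\sO$ this propagates to $\la L\phi^{an},\ve_k\ra=L\la\phi^{an},\ve_k\ra=L\phi_k$, where I used $\phi_k=\la\phi^{an},\ve_k\ra$ from \eqref{Frob-basis-via-phi-an}. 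By \eqref{Lphi} the right-hand side vanishes for $0\le k\le r-1$ and equals $\tfrac{\log(t)^{k-r}}{(k-r)!}$ for $k=r,\dots,n+r$.

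It then remains to check that $e_{-1}$ reproduces exactly these values. Writing $e_{-1}=e_{r-1-r}$, its pairing with \eqref{horsol-eps} extracts the coefficient of $e^\vee_{-1}$ in $\ve_k$, namely $\la e_{-1},\ve_k\ra=\tfrac{\log(t)^{k-r}}{(k-r)!}$ when $r\le k\le n+r$ and $0$ otherwise. This agrees with $L\phi_k$ for every $k$, so $L\phi^{an}=e_{-1}$, completing the proof. I do not anticipate a serious obstacle here: the only points demanding care are the commutation of $L$ through the pairing --- which is precisely where horizontality of the $\ve_k$ enters --- and the index bookkeeping that identifies $e_{-1}$ with the slot $i=r$ in the basis $e_{r-1-i}$.
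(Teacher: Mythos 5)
Your proof is correct, and its computational core --- pairing $L\phi^{an}$ against the horizontal sections $\ve_k$, commuting $L$ through the pairing by horizontality, and invoking \eqref{Lphi} together with the explicit shape \eqref{horsol-eps} of the $\ve_k$ --- is the same as in the paper. Where you genuinely diverge is in how the section $L\phi^{an}$ gets pinned down. The paper first uses the quotient structure: since $\phi^{an}$ lifts $m$ and $Lm=0$ in $M_{\Delta^*}\cong \E_n/\text{Sym}^n K_t$, it writes $L\phi^{an}=B_1e_{-1}+\cdots+B_{n+1}e_{-n-1}$ with $B_i\in\sO$, and then solves the resulting triangular system $\frac{\log(t)^{k-r}}{(k-r)!}=\sum_{j=r}^{k}\frac{\log(t)^{k-j}}{(k-j)!}\,B_{j-r+1}$ by observing that functions in $\sO$ cannot involve powers of $\log t$, forcing $B_1=1$ and $B_j=0$ for $j>1$. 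You instead skip the reduction modulo $\text{Sym}^n K_t$ altogether and use nondegeneracy of the pairing against the full solution basis: since $\ve_0,\dotsc,\ve_{n+r}$ are $\C$-independent horizontal sections of a rank $n+r+1$ connection, they give a basis of each fiber of $\E_n^\vee$ (a fact the paper itself records later, just before Proposition~\ref{Qstruct}), so a section of $\E_n$ is determined by its pairings with them; it then suffices to match $\langle L\phi^{an},\ve_k\rangle=L\phi_k$ against $\langle e_{-1},\ve_k\rangle$ for all $0\le k\le n+r$, which you do correctly, including the vanishing cases $k<r$. Your route buys a small economy --- no appeal to $Lm=0$ and no ``no logs in $\sO$'' argument --- at the cost of checking all $n+r+1$ pairings rather than solving the triangular system coming from $k\ge r$; both arguments are equally rigorous and the index bookkeeping is identical.
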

\begin{proof}[Proof of lemma] Since $Lm=0$, we can write $L\phi^{an}=B_1e_{-1}+\cdots+B_{n+1}e_{-n-1}$ with $B_i\in \sO$. From \eqref{Lphi} we get
$$\bal&\frac{(\log t)^{k-r}}{(k-r)!} = L\phi_k = \langle L\phi^{an}, \ve_k\rangle \\
&= \langle B_1e_{-1}+\cdots + B_{1+n}e_{-n-1},\ve_k\rangle = \sum_{j=r}^{k} \frac{\log(t)^{k-j}}{(k-j)!} B_{j-r+1}.
\eal$$
Since the $B_i\in \sO$ cannot involve powers of $\log(t)$, we conclude that $B_1=1$ and $B_j=0$ for $j>1$. Thus, $L\phi^{an}= e_{-1}$ and hence $D^jL\phi^{an} =  e_{-1-j}$.  
\end{proof}

Note that $F^{-j}\Big((\text{Sym}^n K_t)(1)\Big) = \sO e_{-1}\oplus\cdots \oplus \sO e_{-j}$. Since $L\phi^{an} = e_{-1}$ we see that $D^r \phi^{an} \in F^{-1} \E_n$ (remember we shrank $\Delta^*$ so that the leading coefficient of $L$ is a unit in $\sO$.) The case $a=0$ follows from $D\left( F^0 \E_n\right) \subset \sum_{j=0}^r \sO D^j \phi^{an} = F^{-1}\E_n$. Finally, $(\text{Sym}^n K_t)(1)$ satisfies Griffiths transversality, and the proposition follows for all $a$. \end{proof}
 
It is clear from the proof of Proposition~\ref{Griffiths-transversality} that the filtration~\eqref{Hodge-E-n} is actually given by
\eq{Hodge-E-n-1}{
F^i \E_n = \sum_{j=0}^{r-1-i} \sO \, D^j \phi^{an}, \quad -n-1 \le k \le r-1.
}
With the Hodge filtration~\eqref{Hodge-E-n-1} and the weight filtration defined as 
\eq{weight-E-n}{W_i \E_n =W_i((\text{Sym}^nK_t)(1)) \;\text{  for }\; i<r-1;\quad 
W_{r-1}\E_n=\E_n,
}
it is clear that our variation of mixed Hodge structure on $\E_n$ is an extension of the pure  variation of weight $w =r-1$ on $M_{\Delta^*}$ by the mixed Tate variation $(\text{Sym}^n K_t)(1)$. 

As it stands, our variation of HS $\E_n$ is only a variation of $\C$-HS. We shall now lift it to a variation of $\Q$-HS. We work at a base point $p \in \Delta^*$. Recall that the $\Q$-structure on solutions $\sM_p^\vee \subset (\sE_n^\vee)_p$ is given by rational Betti homology classes.  The subspace $\sM_p^\vee(\Q)$ is clearly preserved by the monodromy. Since we assume that the action of $\sigma_0$ is maximally unipotent, we have the nilpotent operator $N=\log(\sigma_0)$ acting on $\sM_p$ and preserving the $\Q$-structure. Quite generally, associated to a nilpotent operator on a finite dimensional vector space there is a filtration $L_*$ (Jacobson filtration). In our case $N$ is maximally nilpotent and the Jacobson filtration is given by $L_k=N^{\lfloor \frac{r-k}{2}\rfloor}(\sM_p)$ for $1-r \le k \le r-1$ (this fact was already used in the proof of Lemma~\ref{mumhodge}, see~\eqref{Jacobson-on-Mlim}). As $M$ carries a pure HS of weight $w=r-1$, a basic result of Schmid and Deligne implies that there exists a limiting mixed HS $M_{\lim}$ with $M_{\lim}(\Q)=\sM_p(\Q)$ and weight filtration given by 
\[
W_k M_{\lim} = L_{k-w}= N^{r-\lceil \frac{k+1}2\rceil}(\sM_p),\quad 0 \le k \le 2(r-1).
\]
It is clear in our case that  $N^{r-1-j}(\sM_p^\vee(\C)) = \C \ve_0 + \ldots + \C \ve_j$. We shall show that, in the situation when the Frobenius constants $\kappa_j$ are defined  and $\kappa_0 \ne 0$, these numbers can be used to describe the $\Q$-subspaces of the Jacobson filtration. Recall that $t=c$ is a reflection point for $M$. Suppose that $\delta \in \sM_p^\vee$ in Definition~\ref{c-assum} is chosen to be rational, $\delta \in (\sigma_c-1)\sM_p^\vee(\Q)$, and consider  the Frobenius constants defined by
\eq{kappas-via-delta}{
(\sigma_c-1)\ve_j = \kappa_j \delta,
}
where $\ve_j$ are the horizontal sections~\eqref{horsol-eps} yielding the classical Frobenius solutions,~\eqref{Frob-basis-via-phi-an}. As a connection, we have a natural globalization of $\E_n$, namely $\E_n=\sD/\sD D^{n+1}L$, \eqref{connect}. Thus, it makes sense also to talk about the variation around $c$ for horizontal sections of $\E_n$. Lemma~\ref{c-remains-reflection} shows that the variation of the local monodromy of the local system $\sE_n^\vee = (\E_{n,an}^\vee)^{\nabla^\vee = 0} \cong Sol(D^{n+1}L)$ around $c$ also has rank $1$. Hence formula~\eqref{kappas-via-delta} with $j \ge r$ defines the higher Frobenius constants. Recall that solutions $\ve_0, \ldots,\ve_{n+r}$ form a $\C$-basis for the multivalued horizontal sections $(\sE_n^\vee)_p$; it will be convenient to drop the base point $p$ from our notation and denote their $\C$-span by $\sE^\vee_n = \C \ve_0 + \ldots + \C \ve_{n+r}$. 

\begin{prop}\label{Qstruct} Let $\kappa_0,\kappa_1,\ldots$  be the Frobenius constants defined by~\eqref{kappas-via-delta}, where $\delta \in \sM_p^\vee(\Q)$ is a non-zero rational solution spanning the image of the variation of the local monodromy around $t=c$. Assume further that $\sigma_c(\ve_0)\neq \ve_0$. Then there exists a unique $\Q$-structure $\sE^\vee_n(\Q)$ on $\sE^\vee_n$ such that \newline\noindent
(i) $\kappa_0^{-1} \ve_0\in \sE^\vee_n(\Q)$ and the filtration $\text{fil}_j \sE^\vee_n := \C \ve_0+\C\ve_1+\cdots+\C\ve_j$ is defined over $\Q$;\newline\noindent
(ii) $\sE^\vee_n(\Q)$ is stable under both loops $\sigma_0, \sigma_c$ around $t=0$ and $t=c$ respectively. \newline\noindent
A $\Q$-basis for  $\sE^\vee_n(\Q)$ is then given by 
\eq{eta}{
\eta_k := (2 \pi i)^{-k} \sum_{j=0}^k \alpha_j \ve_{k-j}, \quad 0 \le k \le n+r,
}
where $\alpha_j$ are the coefficients of the series
\eq{alpha}{
\sum_{j=0}^\infty \alpha_j s^j := 1 /\left(\sum_{j=0}^\infty \kappa_j s^j\right) = \frac1{\kappa_0} + \frac{-\kappa_1}{\kappa_0^2} \, s + \frac{-\kappa_2\kappa_0+\kappa_1^2 }{ \kappa_0^3} s^2+ \ldots}
\end{prop}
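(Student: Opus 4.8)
The plan is to first rewrite the two monodromy operators in the basis $\{\eta_k\}$ of \eqref{eta}, and then to pin down the one transcendental input---the position of $\delta$---using the Betti $\Q$-structure that already exists on the rank-$r$ sublocal system $\sM^\vee$.

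First I would record the action of $\sigma_0$ and $\sigma_c$ on the $\eta_k$. Because $\eta_k=\sum_j\alpha_j(2\pi i)^{-j}\rho_{k-j}$ is the same ``convolution'' reindexing of the basis $\rho_k=(2\pi i)^{-k}\ve_k$ in which $N=\log\sigma_0$ is the shift $\rho_k\mapsto\rho_{k-1}$, one gets at once $N\eta_k=\eta_{k-1}$, hence $\sigma_0\eta_k=\exp(N)\eta_k=\sum_{b\ge0}\frac1{b!}\eta_{k-b}$, a matrix with rational (indeed $1/b!$) entries. For $\sigma_c$ I would use \eqref{kappas-via-delta}: $(\sigma_c-1)\eta_k=(2\pi i)^{-k}\sum_{j=0}^k\alpha_j\kappa_{k-j}\,\delta$, and the defining relation \eqref{alpha} that $\sum_j\alpha_js^j$ is the inverse of $\sum_j\kappa_js^j$ collapses the inner sum to $\delta_{k,0}$. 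Thus $\sigma_c\eta_0=\eta_0+\delta$ and $\sigma_c\eta_k=\eta_k$ for $k\ge1$. Consequently both operators preserve $\sum_k\Q\eta_k$ \emph{as soon as} $\delta$ itself lies in $\sum_k\Q\eta_k$; establishing this membership is the heart of the matter.

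The key step is to show $\eta_0,\dots,\eta_{r-1}$ form a $\Q$-basis of the Betti lattice $\sM^\vee(\Q)$, so that in particular $\delta\in\sM^\vee(\Q)=\sum_{k<r}\Q\eta_k$. Here I would exploit that $\sM^\vee(\Q)$ is stable under $\sigma_0,\sigma_c$ and contains $\delta$, that $N$ (a rational polynomial in $\sigma_0-1$) is maximally nilpotent with $\ker N=\text{fil}_0=\C\ve_0=\C\eta_0$, and that the Jacobson pieces $\text{fil}_j=N^{r-1-j}\sM^\vee$ are $\Q$-rational. For the base case, writing $\text{fil}_0\cap\sM^\vee(\Q)=\Q b_0$ and $\eta_0=\lambda b_0$, the identity $(\sigma_c-1)\eta_0=\delta$ together with $(\sigma_c-1)b_0=q\,\delta$ for a rational $q$ (nonzero, since otherwise $(\sigma_c-1)\eta_0=0$) forces $\lambda q=1$, so $\lambda=q^{-1}\in\Q$ and $\eta_0\in\sM^\vee(\Q)$. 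For the inductive step, given $\eta_{k-1}\in\sM^\vee(\Q)$ with $1\le k\le r-1$, one has $\eta_{k-1}\in\text{fil}_{k-1}\cap\sM^\vee(\Q)\subseteq\text{fil}_{r-2}\cap\sM^\vee(\Q)=N\sM^\vee(\Q)$, so there is a rational $w$ with $Nw=\eta_{k-1}$; then $\eta_k-w\in\ker N=\C\eta_0$, and the rational scalar is pinned by $(\sigma_c-1)\eta_k=0$ (valid for $k\ge1$), since $(\sigma_c-1)w\in\C\delta\cap\sM^\vee(\Q)=\Q\delta$. This yields $\eta_k\in\sM^\vee(\Q)$, and a dimension count makes these $r$ vectors a $\Q$-basis.

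With $\delta\in\sum_k\Q\eta_k$ in hand, existence is immediate: set $\sE^\vee_n(\Q):=\sum_{k=0}^{n+r}\Q\eta_k$; the first paragraph gives stability under $\sigma_0,\sigma_c$ (condition (ii)), while $\eta_0=\kappa_0^{-1}\ve_0$ and $\text{fil}_j=\operatorname{span}_\C(\eta_0,\dots,\eta_j)$ give condition (i). For uniqueness I would run the same $N$-preimage induction inside an arbitrary structure $\sE^\vee_n(\Q)'$ satisfying (i),(ii): such a structure contains $\eta_0$ by the normalization in (i) and $\delta=\sigma_c\eta_0-\eta_0$ by (ii); since it is $N$-stable with $\Q$-rational $\text{fil}_j$ and $N$ is maximally nilpotent of full size $n+r+1$ on $\sE^\vee_n$, the preimage-plus-$\sigma_c$ argument (now valid in the range $1\le k\le n+r$) places every $\eta_k$ in $\sE^\vee_n(\Q)'$, forcing $\sE^\vee_n(\Q)'=\sum_k\Q\eta_k$ by equality of $\Q$-dimensions. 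The main obstacle is exactly the base case of the middle step: translating the transcendental normalization ``$\kappa_0^{-1}\ve_0$ is a period'' into the rational statement $\eta_0\in\sM^\vee(\Q)$, which is what makes the entire $\eta$-basis rational. Once $N$ and $\sigma_c$ are seen to act rationally this propagates mechanically, but it genuinely relies on having the honest Betti lattice on $\sM^\vee$ available as the seed, since $\sE^\vee_n$ carries no a priori $\Q$-structure of its own.
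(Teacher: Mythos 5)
Your proof is correct, and its skeleton matches the paper's: both define the candidate lattice as $\sum_k\Q\eta_k$, compute $N\eta_k=\eta_{k-1}$ and $(\sigma_c-1)\eta_k$ ($=\delta$ for $k=0$, $=0$ for $k>0$), reduce existence to the single membership $\delta\in\sum_k\Q\eta_k$, and prove uniqueness by induction along the filtration. Where you genuinely diverge is in how that membership is established. The paper argues in the opposite direction: it expands $\delta=\sum_{k=0}^{r-1}\mu_k\eta_k$ and extracts each coefficient by applying $(\sigma_c-1)N^k$, which kills every term except $\mu_k\delta$; since $N^k\delta$ stays in the monodromy-stable Betti lattice, $\mu_k\delta\in\Q\delta$, hence $\mu_k\in\Q$ --- two lines. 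You instead prove inductively that each $\eta_k$, $0\le k\le r-1$, lies in $\sM_p^\vee(\Q)$ (base case via the rational line $\text{fil}_0\cap\sM_p^\vee(\Q)$, inductive step via $N$-preimages inside the rational lattice, with $\sigma_c$ pinning the $\ker N$-ambiguity) and conclude by a dimension count. This is longer, but it yields inside the proof a statement the paper only establishes separately after the proposition: that $\eta_0,\dotsc,\eta_{r-1}$ is a $\Q$-basis of the Betti lattice, i.e.\ $\text{fil}_{r-1}\sE_n^\vee(\Q)=\sM_p^\vee(\Q)$. Your uniqueness argument is likewise a little stronger: the paper's induction splits $W_k=W_0\oplus\ker(\sigma_c-1|_{W_k})$ and uses the $\Q$-rationality of all the $\text{fil}_j$ demanded in (i), whereas your $N$-preimage induction consumes only $\eta_0\in\sE_n^\vee(\Q)'$ and stability under $\sigma_0,\sigma_c$, showing that most of hypothesis (i) is redundant for uniqueness. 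Two points should be made explicit in a final write-up: the identities $\text{fil}_{r-2}\cap\sM_p^\vee(\Q)=N\sM_p^\vee(\Q)$ and, in the uniqueness step, $\mathrm{image}(N)\cap W=NW$ both rest on the standard rank count for a rational nilpotent operator on a $\Q$-form (kernels and images commute with complexification); and your appeal to $(\sigma_c-1)w\in\C\delta$ for arbitrary $w\in\sE_n^\vee$ uses that the image of $\sigma_c-1$ on all of $\sE_n^\vee\cong Sol(D^{n+1}L)$, not merely on $\sM^\vee$, is the line $\C\delta$ --- which is exactly what Lemma~\ref{c-remains-reflection} provides, as quoted just before the proposition.
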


\begin{proof}The $\eta_k$ defined in~\eqref{eta} are linearly independent over $\C$ (because the $\ve_k$ are.) It is clear that $\sE_n^\vee(\Q) := \sum_{k=0}^{n+r} \Q \eta_k$ satisfies~(i). Let us check it also satisfies~(ii). One can easily check that $N = \log(\sigma_0)$ acts as $N \eta_k = \eta_{k-1}$, and hence $\sigma_0=\exp(N)$ preserves $\sE_n^\vee(\Q)$. As for the action of $\sigma_c$, we observe that
\[
(\sigma_c-1)\eta_k = (2 \pi i)^{-k}\sum_{j=0}^k \alpha_j (\sigma_c-1)\ve_{k-j} = (2 \pi i)^{-k} \left(\sum_{j=0}^k \alpha_j \kappa_{k-j} \right) \delta  = \begin{cases} 0, & k > 0, \\ \delta, & k=0. \end{cases} 
\]
It remains to show that $\delta \in \sE_n^\vee(\Q)$. We write $\delta = \sum_{k=0}^{r-1} \mu_k \eta_k$ with $\mu_k \in \C$. Since $\delta \in \sM_p^\vee(\Q)$ and this space is preserved by the monodromy operators, it follows from $(\sigma_c-1)\delta=\mu_0 \delta$ that $\mu_0 \in \Q$. To access other coefficients, note that $(\sigma_c-1)N^k \delta = \mu_k \delta$ implies that $\mu_k \in \Q$ for all $k$.

To show uniqueness, denote $V_k = \langle\eta_0,\dotsc,\eta_k\rangle_\Q$ and suppose we have another $\Q$-structure with these properties, say $W_k = \langle\nu_0,\dotsc,\nu_k\rangle_\Q$ with $W_k \otimes \C = V_k \otimes \C$ for $k \ge 0$. It is clear from~(i) that $V_0=W_0$. Suppose $k>0$ and assume inductively that $W_{k-1}=V_{k-1}$. Due to~(ii) we must have that $\delta=(\sigma_c-1)\eta_0 \in W_{r+n}$, and hence for any $w \in W_k$ one has $(\sigma_c-1) w\in \Q \delta$. It follows that $W_k = W_0\oplus W_k^0$ where $W_k^0:=\ker(\sigma_c-1: W_k \to W_{r+n})$.  Since $(\sigma_0-1)(W_0)=(0)$, it follows that $W_k^0\subset W_k$ defines a splitting of $\sigma_0-1: W_k \surj W_{k-1}=V_{k-1}$. We have $V_k^0\otimes\C = W_k^0\otimes\C \subset \langle\ve_0,\dotsc,\ve_k\rangle$ and with this identification, the two splittings coincide. Thus $V_k^0=W_k^0$ so $V_k=W_k$. 
\end{proof}

Note that $fil_{r-1} \sE_n^\vee(\Q)=\sM_p^\vee(\Q)$. Indeed, $\delta=(\sigma_c-1)(\kappa_0^{-1} \ve_0) \in  \sE_n^\vee(\Q)$ and under the condition $\sigma_c \ve_0 \ne \ve_0$ the space of Betti cycles $\sM_p^\vee(\Q)$ is generated by the images of $\delta$ under $\sigma_0$ (see the proof of  Corollary~\ref{MUM-case-of-main-thm}). Since $fil_j \sE_n^\vee = N^{n+r-j}(\sE_n^\vee)$, Proposition~\ref{Qstruct} defines a unique lift of the Betti structure on $\sM^\vee$ to a $\Q$-structure on $\sE_n^\vee$ for which the Jacobson filtration for $N=\log(\sigma_0)$ is defined over $\Q$.

The final objective in this section is to link the Frobenius constants $\kappa_j$ to periods of a limiting mixed Hodge structure. As we mentioned earlier, for a pure variation of Hodge structure $M$ of weight $w$ the limiting mixed Hodge structure $M_{lim}$ can be identified with the fiber $\sM_p$ with the weight filtration given by the Jacobson filtration for $N=\log(\sigma_0)$ shifted by $w$. More generally, for a variation of mixed Hodge structure $H$ with the weight filtration $W_*H$, a \emph{monodromy weight filtration} on the fiber $\sH_p$ is a filtration $\sW_* \sH_p$ such that $N(\sW_j) \subset \sW_{j-2}$ and such that for each $k$ the filtration induced by $\sW_*$ on $gr_k^W \sH_p$ is the Jacobson filtration defined by $N$ on the pure HS $gr_k^W \sH_p$ and then shifted by $-k$. There is at most one monodromy weight filtration satisfying these conditions, but it can happen that no such monodromy weight filtration exists. As earlier, we denote the fiber $(\sE_n)_p$ simply by $\sE_n$.  

\begin{prop}\label{monodromy-weight-filtration-E-n} Let $L_*$ be the Jacobson filtration for $N=\log(\sigma_0)$ on $\sE_n$. Then $\sW_* = L_*[n+2-r]$ is the monodromy weight filtration on $\sE_n$.
\end{prop}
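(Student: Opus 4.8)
The plan is to exhibit the filtration explicitly and then invoke the uniqueness of the relative monodromy weight filtration recorded just above the statement. The decisive structural input is that, since $t=0$ is a MUM point and (by the existence of the higher Frobenius functions) the monodromy of $D^{n+1}L$ around $0$ is again maximally unipotent, the nilpotent $N=\log(\sigma_0)$ on $\sE_n$ is a single Jordan block of size $n+r+1$. Hence $N$ has a unique complete invariant flag $f_\bullet$, namely $f_i=\ker N^{i+1}=\im N^{\,n+r-i}$ with $\dim f_i=i+1$ and $Nf_i=f_{i-1}$, and its Jacobson filtration is the symmetric reindexing $L_k=f_{(k+n+r)/2}$, with jumps at $k\equiv n+r\ (\bmod\ 2)$, $-(n+r)\le k\le n+r$. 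Since any $N$-stable filtration with one-dimensional graded quotients must be a reindexing of $f_\bullet$, the whole problem is to pin down the correct shift, and I would do this by exhibiting $L_\bullet[n+2-r]$ and checking the two defining axioms.

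First I would match the weight filtration \eqref{weight-E-n} against the steps of $f_\bullet$. The sub-object $(\text{Sym}^nK_t)(1)$ gives $W_{r-2}\sE_n$, an $N$-stable subspace of dimension $n+1$; by uniqueness of invariant subspaces of a single Jordan block this forces $W_{r-2}\sE_n=\ker N^{n+1}=f_n$, and more finely $W_{-2j}\sE_n=\ker N^{\,n+2-j}=f_{n+1-j}$ for $1\le j\le n+1$, matching the weights $-2,-4,\dots,-2(n+1)$ of the twisted symmetric power. The quotient $gr^W_{r-1}\sE_n=\sE_n/f_n$ is the pure weight-$(r-1)$ piece carrying $M$, on which $N$ restricts to a single Jordan block of size $r$.

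Next I would verify that $\sW_j:=L_{j+n+2-r}=f_{\,j/2+n+1}$ (equivalently $\sW_{2m-2n-2}=f_m$) is the relative monodromy weight filtration. The relation $N\sW_j\subset\sW_{j-2}$ is immediate from the Jacobson property $NL_k\subset L_{k-2}$. For compatibility with $W$ one computes the induced filtration on each $gr^W_k$: on the one-dimensional Tate piece $gr^W_{-2j}=f_{n+1-j}/f_{n-j}$ one has $N=0$ and $f_{n+1-j}=\sW_{-2j}$, so this piece sits in $gr^{\sW}_{-2j}$, i.e.\ it carries the (trivial) Jacobson filtration centered at $-2j$; on $gr^W_{r-1}=\sE_n/f_n$ the images $\bar f_{n+i}=f_{n+i}/f_n$ form the invariant flag of the size-$r$ block, and since $f_{n+i}=\sW_{2i-2}$ the graded quotients land at $\sW$-weights $0,2,\dots,2(r-1)$, which is exactly the Jacobson filtration of $N|_M$ centered at $r-1$. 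Thus $\sW$ induces on every $gr^W_k$ the Jacobson filtration centered at $k$ (the shift ``by $-k$'' of the definition), and by the asserted uniqueness $\sW=L_\bullet[n+2-r]$ is the monodromy weight filtration.

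The only genuinely delicate point is that a single global shift, $n+2-r$, must simultaneously produce the correct centering on the Tate sub (weights $-2(n+1),\dots,-2$) and on the pure quotient (weights $0,2,\dots,2(r-1)$). This is forced by a dimension count: the $n+r+1$ one-dimensional graded quotients of $\sW$ must occupy precisely the arithmetic progression $-2(n+1),\dots,2(r-1)$ of step $2$, which is symmetric about $r-n-2=-(n+2-r)$, so the shift is uniquely determined and no inconsistency between the two graded pieces can arise. Everything else is bookkeeping with the single invariant flag $f_\bullet$.
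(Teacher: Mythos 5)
Your proposal is correct and takes essentially the same route as the paper: both exhibit $\sW_*=L_*[n+2-r]$ explicitly in terms of the unique $N$-stable flag and then verify the two defining conditions by computing the induced filtrations on the one-dimensional Tate graded pieces and on $gr^W_{r-1}\cong \sM$, where one finds the Jacobson filtration shifted by $2j$ and by $1-r$ respectively. The only difference is cosmetic: the paper carries out this computation in the explicit basis $\{e_i\}$ (via $N^k(\sE_n)=\C e_{-n-1}+\ldots+\C e_{r-1-k}$), while you phrase it coordinate-freely through $f_i=\ker N^{i+1}$ and the uniqueness of invariant subspaces of a single Jordan block.
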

\begin{proof} One can check that $N^k(\sE_n) = (\C \ve_0^\vee + \ldots + \C \ve_{k-1}^\vee)^\perp = \C e_{-n-1}+\ldots+\C e_{r-1-k}$, which yields 
\[\bal
\sW_k \sE_n &= L_{k+n+2-r} \sE_n = N^{\lfloor \frac{n+r+1-(k+n+2-r)}{2}\rfloor}(\sE_n)= N^{r-1-\lfloor \frac{k}{2}\rfloor}(\sE_n)\\
& = \C e_{-n-1} + \ldots + \C e_{\lfloor \frac{k}{2}\rfloor}.
\eal\]
The weight filtration $W_* \E_n$ is given by~\eqref{weight-E-n}. One can check that for $k < 0$ we have 
$W_{k}\sE_n = \sW_k \sE_n$ and hence the filtration induced by $\sW_*$ on each $gr^W_{-2k}\sE_n$ is zero in degrees $< -2k$ and everything in degrees $\ge -2k$. This is precisely the Jacobson filtration for $N$ on this rank one subquotient shifted by $2k$. It remains to check the rank $r$ graded piece $gr_{r-1}^W \sE_n \cong \sM$. There the induced filtration is given by 
\[
\sW_k gr_{r-1}^W \sE_n = \C \overline {e}_0 + \ldots + \C \overline {e}_{\lfloor \frac{k}{2}\rfloor}  = N^{r-1-\lfloor \frac{k}{2}\rfloor}\left( gr_{r-1}^W \sE_n \right),     
\]
which is again the Jacobson filtration shifted by $1-r$.
\end{proof}

\begin{prop}\label{alphas-are-periods} The limiting mixed Hodge structure $(\E_n)_{lim}$ exists. If $\alpha_k$ are the coefficients of the inverse Frobenius series~\eqref{alpha}, then numbers $\alpha_k (2 \pi i)^{-h}$ with $0 \le k \le h \le n+r$ are periods of $(\E_n)_{lim}$.  
\end{prop}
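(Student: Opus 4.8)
The plan is to realize $(\E_n)_{lim}$ as the limiting mixed Hodge structure of the admissible variation built above, and then to read off its period matrix by untwisting the multivalued solutions $\ve_k$ through Schmid's nilpotent orbit. First I would dispose of existence. We have already equipped $\E_n$ with a Hodge filtration \eqref{Hodge-E-n-1}, a weight filtration \eqref{weight-E-n}, and verified Griffiths transversality (Proposition~\ref{Griffiths-transversality}); together with the $\Q$-structure $\sE_n^\vee(\Q)$ of Proposition~\ref{Qstruct} this exhibits $\E_n$ as a $\Q$-variation of mixed Hodge structure on $\Delta^*$, an extension of the polarizable pure variation $M_{\Delta^*}$ of weight $r-1$ by the mixed Tate variation $(\text{Sym}^n K_t)(1)$. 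Both graded pieces are admissible, and since the base is one-dimensional the relative monodromy weight filtration for $N=\log(\sigma_0)$ automatically exists; we have in fact exhibited it as $\sW_*=L_*[n+2-r]$ in Proposition~\ref{monodromy-weight-filtration-E-n}. Hence the variation is admissible (Steenbrink--Zucker, Kashiwara) and $(\E_n)_{lim}$ exists, with weight filtration $\sW_*$ (defined over $\Q$ because $N$ preserves $\sE_n^\vee(\Q)$) and limiting Hodge filtration $F^\bullet_{lim}$ produced by the nilpotent orbit theorem.

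Next I would pin down the two lattices and the limiting Hodge filtration concretely. Because $\nabla e_i=e_{i-1}\,dt/t$ has nilpotent residue at $0$, the frame $\{e_i\}$ trivialises the Deligne canonical extension $\overline{\E}_n$, so the de Rham fibre $\overline{\E}_{n,0}=\bigoplus_i\C\,e_i(0)$ is defined over $K$; the Betti lattice is $\sE_n^\vee(\Q)$ with $\Q$-basis $\{\eta_k\}$ from \eqref{eta}. The limiting Hodge filtration reads off directly from \eqref{Hodge-E-n-1}: the sections $D^j\phi^{an}$ are single-valued with holomorphic coordinates in the frame $\{e_i\}$, and the normalisation \eqref{normalization} gives $D^j\phi^{an}(0)=e_{r-1-j}$, whence $F^i_{lim}=\sum_{j\le r-1-i}\C\,e_{r-1-j}$. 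Thus no untwisting is needed on the de Rham side.

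The period computation is then the comparison between $\overline{\E}_{n,0}$ and $\sE_n^\vee(\Q)$ under the pairing $\langle\,,\,\rangle$, carried out after untwisting the multivalued flat sections by $e^{-\frac{\log t}{2\pi i}N}$. Since $N\ve_k=2\pi i\,\ve_{k-1}$, one has $e^{-\frac{\log t}{2\pi i}N}\ve_k=\sum_{l\ge 0}\frac{(-\log t)^l}{l!}\ve_{k-l}$; inserting the explicit expansion \eqref{horsol-eps} and using the binomial identity $\sum_{l}\frac{(-1)^l}{l!\,(m-l)!}=\delta_{m,0}$ collapses all logarithms and yields the single-valued identity $e^{-\frac{\log t}{2\pi i}N}\ve_k=e^\vee_{r-1-k}$. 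Consequently the untwisted $\Q$-rational sections are
\[
\widetilde\eta_k=(2\pi i)^{-k}\sum_{j=0}^k\alpha_j\,e^\vee_{r-1-k+j},
\]
and pairing against the de Rham frame gives $\langle e_i,\widetilde\eta_k\rangle=(2\pi i)^{-k}\alpha_{\,i-r+1+k}$ for $r-1-k\le i\le r-1$. As $i$ and $k$ vary these entries of the period matrix run over exactly $\alpha_k(2\pi i)^{-h}$ with $0\le k\le h\le n+r$, which is the assertion.

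The routine binomial cancellation is harmless; the genuine care lies in two places. First, one must invoke the correct existence statement — that in one variable admissibility, equivalently the existence of the relative monodromy weight filtration already checked in Proposition~\ref{monodromy-weight-filtration-E-n}, is automatic — and confirm that the weight filtration descends to $\Q$, which is precisely what Proposition~\ref{Qstruct} secures. Second, the delicate bookkeeping is to match the normalisation of the nilpotent orbit (the twist by $t^{-N/2\pi i}$) with the paper's $2\pi i$-conventions for $N$ and for the $\eta_k$, so that the limiting Hodge filtration, the $\Q$-Betti lattice, and the pairing are all computed in the same frame $\{e_i\}$; this alignment, rather than any single hard estimate, is where I expect the main difficulty.
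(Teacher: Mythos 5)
Your proposal is correct in substance, but it reaches the conclusion by a genuinely different route than the paper. The paper does not invoke the general theory of admissible variations at all: its Lemma~\ref{key-lemma-limiting-MHS} redoes Schmid's limiting process by hand for the operator $D^{n+1}L$ (the twist $\pi'_z=\exp(-zN)\circ\pi_z$, which is exactly your untwisting $e^{-\frac{\log t}{2\pi i}N}$), shows the limit filtration on $V^\vee$ is spanned by the dual Frobenius basis $\phi_j^\vee$, that limits of \emph{algebraic} classes give the de Rham $K$-structure, and that the result is a MHS because $F^*_\infty$ and $W_*$ are opposite; the existence of $(\E_n)_{lim}$ and the identification $(\E_n)_{lim}=\H[n+1]$ then follow from matching the shifts of the filtrations (Proposition~\ref{monodromy-weight-filtration-E-n} and~\eqref{Hodge-E-n-1}), and the periods are read off from $\langle\phi^{an},\eta_k\rangle=(2\pi i)^{-k}(\alpha_k\phi_0+\cdots+\alpha_0\phi_k)$. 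Your version outsources existence to Steenbrink--Zucker/Kashiwara and replaces the dual-Frobenius-basis bookkeeping by the Deligne canonical extension: your identity $e^{-\frac{\log t}{2\pi i}N}\ve_k=e^\vee_{r-1-k}$ is precisely the mechanism behind the paper's formula~\eqref{Schmid-limit}, your observation $D^j\phi^{an}(0)=e_{r-1-j}$ is the paper's identification of the $K$-structure via limits of algebraic classes, and your period matrix $\langle e_i,\widetilde\eta_k\rangle=(2\pi i)^{-k}\alpha_{i-r+1+k}$ coincides with the paper's pairing. What the paper's route buys is self-containedness (no appeal to admissibility, and the de Rham lattice is pinned down by an explicit limit of functionals, which is also what Remark~\ref{PF-application-remark} needs to match Steenbrink's DR-structure); what your route buys is brevity and a conceptual placement of the construction inside the standard theory of admissible variations.

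One wrinkle you should repair: the claim that ``in one variable admissibility, equivalently the existence of the relative monodromy weight filtration, is automatic'' is false. Graded-polarizable variations of MHS over $\Delta^*$ need not be admissible; the existence of the relative weight filtration and of the limiting Hodge filtration on the canonical extension are genuine conditions, and they are exactly what must be checked. Your argument survives only because you also cite the verifications: Proposition~\ref{monodromy-weight-filtration-E-n} supplies the relative monodromy weight filtration, and~\eqref{Hodge-E-n-1} (the $D^j\phi^{an}$ being holomorphic in the frame $\{e_i\}$) supplies the extension of the Hodge bundles. With the word ``automatic'' deleted and those two verifications promoted to the hypotheses of the Steenbrink--Zucker theorem, your proof is complete.
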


The following observation will be a key step in the proof of Proposition~\ref{alphas-are-periods}. We shall apply the limiting process (as $t \to 0$, a MUM point) from~\cite[(6.15)]{WS} to linear functionals that are naturally defined on the space of solutions $V=Sol_p(L)$ of a differential operator $L$ near a base point $p$. 
Namely, such linear functionals consist of application to elements of $V$ of various differential operators $\sum_j v_j(t) D^j \in \sO[D]$ followed by analytic continuation along a path and evaluation at its endpoint; as the endpoint approaches $t=0$ the monodromy is an obstruction to taking the limit. Schmid's limiting process involves killing the monodromy. When $t=0$ is a MUM point, it can be symbolically expressed by the formula
\[
\lim_{t \to 0} \; \sigma_0^{-\frac{\log(t)}{2 \pi i}} \sum_{j=0}^{m-1} v_j(t) D^j = \sum_{j=0}^{m-1} v_j(0) \phi_j^\vee \in V^\vee, 
\]  
where $\phi_0^\vee,\ldots,\phi_{m-1}^\vee$ is the basis in $V^\vee = {\rm Hom}_\C(V,\C)$ dual to the classical Frobenius basis in $V$ (see Lemma~\ref{key-lemma-limiting-MHS}). This observation allows us to define a mixed Hodge structure on the space of solutions which for Picard--Fuchs differential operators, by construction, coincides with the limiting MHS at $t=0$ defined by Schmid in~\cite{WS}. As a consequence, the coefficients of Betti-rational solutions expressed in the Frobenius basis at $t=0$ are periods of the respective limiting MHS (see Remark~\ref{PF-application-remark}). Note that both the limiting MHS and the set of Frobenius solutions depend on the choice of a branch of $\log(t)$.  

\begin{lem}\label{key-lemma-limiting-MHS} Let $K \subset \C$ be a field and $L = \sum_{j=0}^m q_j(t) D^{m-j}\in K(t)[D]$ be a differential operator with rational coefficients such that the monodromy of its solutions around $t=0$ is maximally unipotent. Assume that the unqiue local exponent of $L$ at $t=0$ equals $\rho=0$. Let $V = Sol_p(L)$ be the space of solutions near a base point $t=p$.   Fix some branch of $\log(t)$ near $p$ and consider the classical Frobenius basis in $V$:
\[
\phi_k(t) = \sum_{j=0}^k \frac{\log(t)^j}{j!} \phi_{k-j}^{an}(t), \qquad k=0,\ldots,m-1,
\]
where $\phi_0^{an},\ldots, \phi_{m-1}^{an} \in K\llbracket t \rrbracket$ are analytic near $t=0$ functions satisfying $\phi_j^{an}(0)=\delta_{j,0}$.

(i) Let $\sO$ be the ring of analytic functions on the punctured disc $\Delta^* = \Delta \setminus \{0\} = \{ t \in \C : 0 < |t| < \ve \}$ that are meromorphic at $t=0$. Denote $\sD = \sO[D]$ and consider the analytic connection on $\Delta^*$ given by $H = \sD / \sD L \cong \sum_{j=0}^{m-1} \sO D^j$. Consider a filtration on $H$ given by 
\eq{F-k-on-H-in-lemma}{
F^k H = \sum_{j=0}^{m-1-k} \sO D^j, \quad 0 \le k \le m-1.
}
Then the limiting filtration in the sense of Schmid (\cite[(6.15)]{WS})  exists and is given on the dual space $V^\vee$ by
\[
F^k_{\infty} \; V^\vee = \C \phi_0^\vee + \ldots + \C \phi_{m-1-k}^\vee.
\] 
Moreover, limits of algebraic classes span $F^k_\infty(K) :=  K \phi_0^\vee + \ldots + K \phi_{m-1-k}^\vee$.

(ii) Consider the nilpotent operator $N = \log(\sigma_0)$ and define $W_*$ to be the respective Jacobson filtration on $V^\vee$ shifted by $m-1$; this filtration is given by 
\eq{W-k-on-H-in-lemma}{
W_k V^\vee := N^{m-1-\lfloor \frac k 2\rfloor}(V^\vee).
}
Then $\H=(V^\vee, W_*, F^*_\infty)$ is a mixed Hodge structure with $gr^{\sW} \H = \oplus_{k=0}^{m-1} \C(-k)$. 
\end{lem}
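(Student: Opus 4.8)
The plan is to handle the two parts separately: part (i) carries all of the analytic content, while part (ii) is a formal consequence of the fact that $N=\log(\sigma_0)$ is a single Jordan block. For part (i) my first step is to package the Frobenius basis into one generating series. Introducing a formal variable $x$ and setting $\Phi(x,t)=\sum_{k\ge0}\phi_k(t)x^k$, the relation $\phi_k=\frac1{k!}\partial_s^k\Phi(s,t)|_{s=0}$ with $\Phi(s,t)=t^s\sum_n a_n(s)t^n$ (the notation of Section~\ref{sec:Apery}, with $\rho=0$) gives exactly $\Phi(x,t)=t^x A(x,t)$, where $A(x,t)=\sum_n a_n(x)t^n$. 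Maximal unipotency forces $\rho=0$ to be the unique local exponent and $I(s)=s^m$, so each $a_n(s)$ — whose denominator divides $I(s+1)\cdots I(s+n)$ — is regular at $s=0$; hence $A(x,t)$ is holomorphic near $(x,t)=(0,0)$ with $A(0,0)=a_0=1$ and $D^kA|_{t=0}=0$ for $k\ge1$.

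The heart of (i) is the untwisting limit. Using $D^j(t^xf)=t^x(x+D)^jf$, the $V^\vee$-valued function attached to $P=\sum_j v_j(t)D^j\in H$ has generating series $\sum_n P(\phi_n)x^n=\sum_j v_j(t)D^j\Phi=t^x\sum_j v_j(t)(x+D)^jA$. Schmid's limiting process multiplies by $\sigma_0^{-\log t/2\pi i}=\exp(-\tfrac{\log t}{2\pi i}N)$, which in this generating-series picture is precisely multiplication by $t^{-x}$ and cancels the prefactor $t^x$. Letting $t\to0$ and using $(x+D)^jA|_{t=0}=x^j$ (an immediate induction from $D^kA|_{t=0}=0$, $k\ge1$) leaves $\sum_j v_j(0)x^j$, i.e.
\[
\lim_{t\to0}\exp\!\left(-\tfrac{\log t}{2\pi i}N\right)\sum_j v_j(t)D^j=\sum_j v_j(0)\phi_j^\vee,
\]
which is the formula announced before the lemma. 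Applying it to $F^kH=\sum_{j=0}^{m-1-k}\sO D^j$ and letting $P$ run over the $\sO$-generators $D^0,\dots,D^{m-1-k}$ (for which $v_i(0)=\delta_{ij}$) shows the limits span exactly $\langle\phi_0^\vee,\dots,\phi^\vee_{m-1-k}\rangle$, giving $F^k_\infty V^\vee$; when the $v_j\in K(t)$ are regular at $0$ one has $v_j(0)\in K$, which yields the assertion about algebraic classes and $F^k_\infty(K)$.

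For (ii) I would read off the two flags in the Frobenius basis. Since $N$ is a single Jordan block, $N^iV^\vee=\langle\phi_i^\vee,\dots,\phi_{m-1}^\vee\rangle$, so $W_{2p}V^\vee=N^{m-1-p}V^\vee=\langle\phi^\vee_{m-1-p},\dots,\phi_{m-1}^\vee\rangle$, the odd steps coincide with the even ones below them, and $gr^W_{2p}$ is one-dimensional, spanned by $\phi^\vee_{m-1-p}$. By part (i), $\phi_j^\vee\in F^{m-1-j}_\infty\setminus F^{m-j}_\infty$, so $\phi^\vee_{m-1-p}$ sits in Hodge degree $p$; hence $F^\bullet_\infty$ induces on the weight-$2p$ graded the pure Hodge structure of type $(p,p)$, i.e. $\C(-p)$. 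Because $N$, and hence $W_\bullet$, is defined over the Betti $\Q$-structure on $\sM_p^\vee$ carried along in $\sE_n$, these gradeds are genuine Tate Hodge structures, and therefore $\H=(V^\vee,W_\bullet,F^\bullet_\infty)$ is a mixed Hodge structure with $gr^W\H=\bigoplus_{k=0}^{m-1}\C(-k)$.

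The main obstacle lies entirely in (i): justifying that Schmid's abstract limiting filtration of \cite[(6.15)]{WS} is actually computed by the naive untwist-and-evaluate procedure above. Concretely I must (a) match the connection $H=\sD/\sD L$ and its Hodge bundle $F^\bullet H$ to a variation in the form to which Schmid's recipe applies, identifying the flat frame so that "$\sigma_0^{-\log t/2\pi i}$" becomes multiplication by $t^{-x}$ on the generating series, and (b) verify that the limit may be taken term by term — this is exactly where holomorphy of $A(x,t)$ near the origin, guaranteed by maximal unipotency, is essential. Once these analytic points are secured, both the explicit limit formula and the purely linear-algebraic verification of (ii) follow as above.
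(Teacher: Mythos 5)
Your proposal is correct and follows essentially the same route as the paper: the paper also computes Schmid's limiting filtration by untwisting with $\exp\bigl(-\tfrac{\log t}{2\pi i}N\bigr)$ and evaluating against the Frobenius basis, organizing the computation on the universal cover via the expansion $u(z)=\sum_k u_k(e(z))z^k$ and a binomial sum rather than via your generating series $t^x A(x,t)$, but arriving at the identical limit formula $\sum_j v_j(0)\phi_j^\vee$ and at the same opposed-filtration linear algebra for part (ii). The one point the paper proves that you omit is the converse direction --- the untwisted limit exists \emph{only if} every coefficient $v_j$ is analytic at $t=0$ --- which is what ensures that \emph{every} limit of an algebraic class (not merely the classes $D^j$ you exhibit) lies in $F^k_\infty(K)$, so you should add that remark (it follows easily from your explicit formula for the untwisted series) to fully justify the last assertion of part (i).
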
 
\begin{proof}  (i) Consider the universal covering \[
e: G \to \Delta^*, \quad e(z) = \exp(2 \pi i z) 
\]
of $\Delta^*$ by the upper halfplane $G = \{ z \in \C | Im(z) > -\frac1{2 \pi} \log(\ve) \}$. We will  identify the space of solutions with
\[
V = \left\{ u(z) \in \sO_G \;|\;  (e^*L) u = 0   \right\}.
\] 
The monodromy transformation $\sigma_0 : V \to V$ acts by 
\eq{M0}{
(\sigma_0 u)(z)=u(z+1).
}
For each $z \in G$ there is a map $\pi_z: H \to V^\vee$. In concrete terms, for an element $v= \sum_{j=0}^{m-1} v_j(t) D^j \in H$ the pairing of $\pi_z(v)$ with a solution $u \in V$ is given by 
\[
\langle \pi_z(v) , u \rangle = \sum_j v_j(e(z)) (e^* D)^j u = \sum_j \frac{v_j(e(z))}{(2 \pi i)^j} \frac{d^j u}{dz^j}(z). 
\] 
(Here we used $e^* D = e(z) \frac{d}{d e(z)} = \frac1{2 \pi i} \frac d{dz}$.) Let $N = \log(\sigma_0)=\sum_{h \ge 1} (-1)^{h-1} (\sigma_0-I)^h/h$ be the logarithm  of the local monodromy transformation. To apply Schmid's limiting procedure as in \cite[\S 6]{WS}, we shall consider maps defined by 
\[
\pi'_{z} := \exp(- z N) \circ \pi_{z} : H \to V^\vee.
\]
To prove~(i) we will check that for any $v = \sum_{j=0}^{m-1} v_j(t) D^j \in H$:
\begin{itemize}
\item[(i')]  one has $\pi'_{z+1}(v) = \pi'_{z}(v)$;
\item[(i'')] the limit $\lim_{Im(z) \to +\infty} \pi_z'(v)$ exists if and only if all $v_j$ are analytic at $t=0$, in which case one has
\eq{Schmid-limit}{
\lim_{Im(z) \to +\infty} \pi'_{z}(v) =  \sum_{j=0}^{r-1} v_j(0) \phi_j^\vee, 
}
where $\phi_0^\vee, \phi_1^\vee, \ldots$ is the basis in $V^\vee$ dual to the Frobeius basis
\eq{Frob-basis-univ-cover}{
\phi_k(z) = \sum_{j=0}^k \frac{(2 \pi i z)^j}{j!} \phi_{k-j}^{an}(e(z)) \in V, \quad 0 \le k \le m-1.
}
\end{itemize}
Note that every solution $u \in V$ can be uniquely written as $u(z) = \sum_{k=0}^{m-1} u_k(e(z)) z^k$ 
with $u_0(t),\ldots,u_{m-1}(t) \in \sO_\Delta$. Using~\eqref{M0} we find that the action of $N$ on $V$ is given by
\[
N : \sum_k u_k(e(z)) z^k \mapsto \sum_k u_k(e(z)) k \, z^{k-1}.
\]
We evaluate $\pi_z'(v)$ on $u = \sum_k u_k(e(z)) z^k$ as follows:
\[\bal
\langle \pi'_{z}(v), u\rangle &= \sum_{h \ge 0} \frac{(-z)^h}{h!} \langle \pi_z(v), N^h u \rangle \\
&= \sum_{h \ge 0} \sum_{j,k=0}^{m-1} \frac{(-z)^h}{h!} \frac{v_j(e(z))}{(2 \pi i)^{j}} \left(\frac{d}{dz}\right)^j \left( u_k(e(z))   \left(\frac{d}{dz}\right)^h z^k \right) \\
&= \sum_{h \ge 0} \sum_{j,k=0}^{m-1} \frac{(-z)^h}{h!} v_j(e(z)) \sum_{s=0}^j (D^{j-s} u_k)(e(z))  \frac{k(k-1)\ldots(k-h-s+1)}{(2 \pi i)^{s}} z^{k-h-s} \\
&= \sum_{j,k=0}^{m-1} \sum_{s=0}^{\min(j,k)} v_j(e(z)) (D^{j-s} u_k)(e(z))(2 \pi i)^{-s} \frac{k!}{(k-s)!} z^{k-s}\sum_{h \ge 0} (-1)^h \binom{k-s}{h} \\
&= \sum_{j \ge k} v_j(e(z)) (D^{j-k} u_k)(e(z)) \frac{k!}{(2 \pi i)^{k}}.\\
\eal\]
Note that the function in the last row is periodic in $z$, which proves the claim (i'). When all $v_j(t)$ are analytic at $t=0$ the above expression passes to the limit as $Im(z)$ grows infinitely:
\[
\lim_{Im(z) \to +\infty} \langle \pi'_{z}(v) , u \rangle =  \sum_{j=0}^{m-1} v_j(0) u_j(0) \frac{j!}{(2 \pi i)^j}.
\] 
It remains to notice that the linear functional that sends every $u=\sum_k u_k(e(z)) z^k$ to $u_j(0) (2 \pi i)^{-j}\,j!$ coincides with $\phi_j^\vee$. This fact follows from formula~\eqref{Frob-basis-univ-cover} because $\phi_0^{an}(0)=1$ and $\phi_j^{an}(0)=0$ when $j>0$. This completes our proof of~(i''). Algebraic classes $v = \sum_{j=0}^{m-1} v_j(t) D^j \in H$ are those with $v_j \in K(t)$. All claims of part~(i) now follow from formula~\eqref{Schmid-limit}.

For~(ii) we note that $N^a(V^\vee)= Span_\C(\phi_a^\vee,\ldots,\phi_{m-1}^\vee)$ and hence filtrations $F^*_\infty H$ and $W_*$ are opposite in the sense that $V^\vee = W_{2k} \oplus F^{k+1}_{\infty}$ for any $k$. It follows that the filtration induced by $F^*_{\infty}$ on $gr_{2k}^W \, V^\vee$ is zero in degrees $> k$ and everything in degree $k$. 
\end{proof}

\begin{remark}\label{PF-application-remark} Suppose that $L$ in Lemma~\ref{key-lemma-limiting-MHS} is a Picard--Fuchs differential operator. More precisely, we assume that for a smooth projective family of algebraic varieties $f: X \to U$ there is a class in the smallest Hodge part $\omega \in F^{m-1} H_{dR}^{m-1}(X/U)$ annihilated by the differential operator $L$. Then $H=\sD/\sD L$ carries a polarized variation of Hodge structure of pure weight $m-1$, and using Griffiths' transversality along with Lemma~\ref{mumhodge} we conclude that~\eqref{F-k-on-H-in-lemma} is the Hodge filtration. The limiting MHS is constructed in \cite[Theorem (6.16)]{WS}, and this is precisely $\H$ from (ii) in Lemma~\ref{key-lemma-limiting-MHS}. 

The $K$-structure from part~(i) of our Lemma is the de Rham structure on $\H$. To see this, we change notation in order to appeal to the work of Steenbrink (\cite{JS}, as corrected in \cite{I}). Steenbrink considers a geometric situation where $H$ is the DR-cohomology of a projective family $f:\sX\to S$, where $\sX$ is smooth and $S$ is a smooth, affine curve. $t$ is a parameter on $S$ and $f$ is smooth away from $t=0$. We assume $Y:=f^{-1}(0)$ is a reduced normal crossings divisor. The link with our standard notation $f:X \to U$ is $X=\sX-Y;\ \  U=S-\{t=0\}$.

Define (here $\sO_{S,0}$ is the local ring at $0\in S$ and $K$ is the residue field at $0$.) 
\[\omega^*_S = \Omega^*_S(\log \{0\});\quad \omega^*_\sX = \Omega^*_\sX(\log Y);\quad \omega^*_{\sX/S} = \Omega^*_{\sX/S}(\log Y);\quad \omega^*_Y = \omega^*_{\sX/S}\otimes_{\sO_{S,0}} K.
\]

Steenbrink's basic result identifies $\omega^*_Y\otimes_K \C$ with the nearby cycle complex $R\Psi(\C)$ for $Y\subset \sX$. This identification depends on the choice of $t$ and also of $\log t$. Given $n$, Steenbrink's result enables one to put a mixed Hodge structure on $H^n(Y,\omega^*_Y)$ which is then identified with the limiting MHS $H^n_{lim}$ as defined by Deligne and Schmid. The nearby cycle complex carries a Betti $\Q$-structure which gives a Betti $\Q$-structure on $H^n_{lim}$, (\cite{PS}, Chap. 11.2). The fact that $\omega^*_Y$ is defined algebro-geometrically automatically endows $H^n_{lim}$ with a $K$-structure (DR structure) which can be used to define periods. 

We introduce a variable denoted $\log t$ and consider the complexes
$$\omega_S[\log t];\quad  \omega_\sX[\log t];\quad \omega_{\sX/S}[\log t].
$$
Sections e.g. of $\omega_S^*[\log t]$ are polynomials in $\log t$ with coefficients which are sections of $\omega^*$. The differential is extended from $\omega$  by setting $d\log t=dt/t$. Note that $dt/t=0$ in $\omega_{\sX/S}$. 

Let $i:Y \inj \sX$ be the inclusion, and write $i^{-1}$ for the sheaf-theoretic restriction functor from sheaves on $\sX$ to sheaves on $Y$. (Note $i^{-1}\neq i^*$, the pullback in the category of sheaves of $\sO$-modules.) Steenbrink's basic result is that the composition
\eq{st-2}{i^{-1}(\omega_\sX[\log t]) \xrightarrow{\log t\mapsto \log t} i^{-1}(\omega_{\sX/S}[\log t]) \xrightarrow{\log t\mapsto 0} i^{-1}\omega_{\sX/S} \to \omega_Y
}
is a quasi-isomorphism. 

Consider the diagram \minCDarrowwidth.5cm
\eq{st-3}{\begin{CD}0 @>>> i^{-1}(\omega^{\cdot -1}_{\sX/S} \otimes \omega^1_S[\log t]) @>>> i^{-1}(\omega_\sX[\log t]) @>\alpha >> i^{-1}(\omega_{\sX/S}[\log t]) @>>> 0 \\
@. @VVV @| @VVV \\
0 @>>> \ker \beta @>>> i^{-1}(\omega_\sX[\log t]) @>\beta >> i^{-1}(\omega_{\sX/S}) @>>> 0 \\
@. @AAA @AAA @| \\
0 @>>> i^{-1}(\omega^{\cdot -1}_{\sX/S} \otimes \omega^1_S) @>>> i^{-1}(\omega^*_\sX) @>>> i^{-1}(\omega^*_{\sX/S})@>>> 0. \\
@. @. @. @VVV @. \\
@. @. @. \omega_Y
\end{CD}
}

A piece of the long-exact sequence of cohomology sheaves on $Y$ associated to the top line reads (for any $p$)
\ml{st-4}{0 \to \sH^p(i^{-1}(\omega_\sX[\log t])) \xrightarrow{\sH^p(\alpha)} \sH^p( i^{-1}(\omega_{\sX/S}[\log t])) \\
 \xrightarrow{\nabla_{GM}} \sH^p(i^{-1}(\omega^{\cdot}_{\sX/S} \otimes \omega^1_S[\log t]))
}
The boundary map coincides with the Gau\ss--Manin connection as indicated. Also, the result of Steenbrink cited above implies that $\sH^p(\alpha)$ is injective, both on the sheaf level and for global cohomology groups. Thus, \eqref{st-4} identifies
\eq{st-5}{H^p(Y, i^{-1}(\omega^*_\sX[\log t]) \cong H^p(Y, i^{-1}(\omega_{\sX/S}[\log t]))^{\nabla_{GM}=0}
}
We know by Frobenius that we have a full set of horizontal sections defined over $K \llbracket t \rrbracket [\log t]$, so we conclude
\ml{st-6}{H^p(Y, i^{-1}(\omega^*_\sX[\log t]) \cong \\
\{\text{Horizontal sections of the GM connection on $H^p(Y,i^{-1}\omega^*_{\sX/S})$}\} \\
\cong H^p(Y,\omega^*_Y).
}
The assignment $\log t \to 0$ in \eqref{st-2} coincides with the vanishing of $z^j, j>0$ in the computation of~\eqref{Schmid-limit}. The $K$-structure from $H^p(Y,\omega_Y)$ is Steenbrink's $DR$-structure. It matches the $K$-structure on $\H$, and if one expresses period functions in the classical Frobenius basis, the coefficients are periods of the limiting Hodge structure.     
\end{remark}

\begin{ex} The period function of the Legendre family of elliptic curves
\[
\phi(t) = \int_1^\infty \frac{dx}{\sqrt{x(x-1)(x-t)}} = \pi \sum_{n=0}^\infty \binom{2n}{n}^2 \left( \frac t{16}\right)^n = \pi \cdot \, _2F_1(\tfrac12,\tfrac12,1|t)
\] 
is a Betti-rational solution to the hypergeometric differential operator $L = D^2 - t(D+\tfrac12)^2$. Then $\pi$ here is the period of the limiting MHS because the hypergeometric function $\phi_0(t)= \, _2F_1(\tfrac12,\tfrac12,1|t)$ is the Frobenius solution analytic at $t=0$. 
\end{ex}

Lemma~\ref{key-lemma-limiting-MHS} can be also applied to variations of mixed Hodge structure. Namely, if the Hodge filtration is a shift of $F^*H$ and if the same shift of $\sW_*$ yields the monodromy weight filtration, then the respective shift of $\H$ is the limiting mixed Hodge structure.

\begin{ex} $L=D^{n+1}$ corresponds to the symmetric power of the Kummer variation $Sym^n K_t$. One can check that the Hodge filtration is given by the shift $F^*[n]=F^{*+n}$ of the filtration~\eqref{F-k-on-H-in-lemma} and that the shift $W_*[n]=W_{*+2n}$ of the filtration~\eqref{W-k-on-H-in-lemma}  is the monodromy weight filtration. It follows that the limiting Hodge structure is $(Sym^n K_t)_{lim}=\H[n]$. The Frobenius basis is given by $\phi_k(t)=\frac{\log(t)^k}{k!}$ and the rational structure is spanned by $\left(\frac{\log(t)}{2 \pi i}\right)^k = (2 \pi i)^{-k} k! \, \phi_k(t)$. Hence $(2 \pi i)^{-k}$, $0 \le k \le n$ are periods of $(Sym^n K_t)_{lim}$. 
\end{ex}    
    
\begin{proof}[Proof of Proposition~\ref{alphas-are-periods}.] We apply Lemma~\ref{key-lemma-limiting-MHS} for the operator $D^{n+1}L$ where $L$ is an operator of order $r$ satisfying the usual assumptions used throughout this section. Here $m=n+r+1$ and as connections we have $H=\sD/\sD D^{n-1} L \cong \E_n$, see~\eqref{connect}. By~\eqref{Hodge-E-n-1} the Hodge filtration $F^* \E_n$ is the shift of $F^* H$ in~\eqref{F-k-on-H-in-lemma} by $n+1$. By Proposition~\ref{monodromy-weight-filtration-E-n} the monodromy weight filtration $\sW_* \sE_n$ is the shift of $W_* H$ by $n+1$. We conclude that the limiting MHS for our variation $\E_n$ exists and is given by $(\E_n)_{lim}=\H[n+1]$. 

The rational structure on $\sE_n^\vee$ was defined in Proposition~\ref{Qstruct}. Since $\eta_k \in \sE_n^\vee(\Q)$ and
\[
\langle \phi^{an},\eta_k \rangle = (2 \pi i)^{-k} (\alpha_k \phi_0 + \alpha_{k-1} \phi_1 + \ldots + \alpha_0 \phi_k),
\]
then $\alpha_k (2 \pi i)^{-h}$ with $0 \le k \le h \le n+r$ are periods of $(\E_n)_{lim}$. 
\end{proof}

We showed that numbers $\alpha_k$ from Proposition~\ref{Qstruct} divided by certain powers of $2 \pi i$ are periods of the limiting MHS associated to the extension 
\eq{exten}{0 \to \text{Sym}^n(K_t)(1) \to \E_n \to M_{\Delta^*} \to 0.
}
Though we assume $M$ is motivic, i.e. $M$ is the Gau\ss--Manin connection for a family of varieties over $\P^1$ as in the beginning of this section, it is not clear that the extension \eqref{exten} is motivic. Indeed, we do not expect it to be so in general. To better understand this question, we consider briefly some calculations inspired by work of Kerr \cite[Section 5.3]{K}. Kerr considers the example of Frobenius which is a pencil of K3-surfaces defined by $1-t \, f(x_1,x_2,x_3)=0$ with 
$$f = \frac{(x_1-1)(x_2-1)(x_3-1)(1-x_1-x_2+x_1x_2-x_1x_2x_3)}{x_1x_2x_3},
$$
as in our Example~\ref{apery-example}.
He shows that the Milnor symbol $\{x_1,x_2,x_3\}$ defines classes in motivic cohomology $H^3_{mot}(X_\lambda,\Q(3))$ where $\lambda=1/t$ and $X_\lambda$ is a suitable compactification of the divisor $f(x_1,x_2,x_3)=\lambda$ in $\G_m^3$. Associated to such a motivic class, one has the Beilinson regulator 
$$reg(\{x_1,x_2,x_3\})\in \text{Ext}^1_{MHS}(H^2(X_\lambda, \Q(0)), \Q(1)). 
$$
We speculate that, writing $t=1/\lambda$ in \eqref{exten} as $\lambda \to \infty$, the extension $reg\{x_1,x_2,x_3\}$ coincides with \eqref{exten} for $n=0$.  If so, this will say in particular that \eqref{exten} for $n=0$ is in this case motivic. (NB. ``speculation''$<<$ ``conjecture'' $<<$ ``theorem'')

We may try to go further and consider the Frobenius example for $n=1$. The extension \eqref{exten} becomes
$$0 \to K_{\lambda^{-1}}(1) \to \E_1 \to H^2(X_\lambda,\Q) \to 0
$$
This extension lies in 
\ml{}{\text{Ext}^1_{MHS}(\Q(0), H^2(X_\lambda)\otimes K_{\lambda^{-1}}(3))\cong \\
\text{Ext}^1_{MHS}(\Q(0), H^3(X_\lambda\times (\G_m,\{1,\lambda^{-1}\}),\Q(4)))
}
Formally, we would expect such a class to arise as the Beilinson regulator of a relative motivic class in $H^4_{mot}(X_\lambda\times (\G_m,\{1,\lambda^{-1}\}),\Q(4))$. 

Actually, it is more precise to look at the whole family, allowing $\lambda$ to vary. To this end, consider the pair
$$\Big(\G_m\times \G_m,(\G_m\times \{1\})\cup \Delta_{\G_m}\Big)
$$
where $\Delta_{\G_m}$ is the diagonal. We view this as a family over $\G_m$ via $pr_1:\G_m\times\G_m \to \G_m$, and we want a class in 
$$H^4_{mot}(X\times_{\G_m}(\G_m\times \G_m,(\G_m\times\{1\})\cup \Delta_{\G_m}),\Q(4))
$$
Let $u$ be the coordinate in the righthand $\G_m$ factor. We consider the Milnor symbol $\{x_1,x_2,x_3,u\}$. Informally speaking, to define a relative motivic class, we need to trivialize this symbol along the diagonal $u=\lambda=f(x_1,x_2,x_3)$. (A convenient and rigorous treatment of relative motivic cohomology can be given using higher cycle complexes, but here our intention is merely to suggest a way forward. We do not attempt to give details.) Informally, one trivializes this symbol by invoking the Steinberg relations, viz.
\ml{}{\{x_1,x_2,x_3,u\} = \{x_1,x_2,x_3,f(x_1,x_2,x_3)\} = \\
\{x_1,x_2,x_3,\frac{(x_1-1)(x_2-1)(x_3-1)(1-x_1-x_2+x_1x_2-x_1x_2x_3)}{x_1x_2x_3}\} \\
= \{x_1,x_2,x_3,1-x_1-x_2+x_1x_2-x_1x_2x_3\} = \\
\{x_1,x_2,x_3,(1-x_1)(1-x_2)\Big(1-\frac{x_1x_2x_3}{(1-x_1)(1-x_2)}\Big)\} = \\
\{x_1,x_2,x_3,\Big(1-\frac{x_1x_2x_3}{(1-x_1)(1-x_2)}\Big)\} = \\
\{x_1,x_2,\Big(\frac{(1-x_1)(1-x_2)}{x_1x_2}\Big),
\Big(1-\frac{x_1x_2x_3}{(1-x_1)(1-x_2)}\Big)\} = 1
}
(On the last line we use the identity $\{x,1-ax\}=\{a^{-1},1-ax\}$.) 

Intuitively, at least, the above argument can be used in the Frobenius example to construct our extension motivically for $n=1$. We may hope to apply a similar argument for $n>1$, working with
$$X\times_{\G_m} \sG^n; \sG= (\G_m\times \G_m, \G_m\times 1\cup \Delta_{\G_m}).
$$
Here $\sG^n = \sG\times_{\G_m}\cdots\times_{\G_m} \sG$ where the structure maps are again $pr_1:\G_m\times \G_m \to \G_m$. The symbol becomes $\{x_1,x_2,x_3,u_1,\dotsc,u_n\}$. The first order trivializations along diagonals are as above, but there are now higher order compatibilities on multiple diagonals that are not understood.

\end{document}